\newcommand{\E}{\mathbf{E}}
\renewcommand{\P}{\mathbf{P}}
\renewcommand{\phi}{\varphi}
\theoremstyle{plain}
\newtheorem{thm}{Theorem}[section]
\newtheorem{lem}[thm]{Lemma}
\newtheorem{prp}[thm]{Proposition}
\newtheorem{cor}[thm]{Corollary}
\theoremstyle{definition}
\newtheorem{rem}[thm]{Remark}
\newtheorem{exa}[thm]{Example}
\newcommand{\e}{\varepsilon}
\newcommand{\be}{\begin{equation}}
\newcommand{\ee}{\end{equation}}
\newcommand{\ben}{\begin{equation*}}
\newcommand{\een}{\end{equation*}}
\newcommand{\ba}{\begin{equation}\begin{aligned}}
\newcommand{\ea}{\end{aligned}\end{equation}}
\DeclareMathOperator{\sgn}{sgn}
\newcommand{\ex}{\mathrm{e}}
\newcommand{\di}{\mathrm{d}}
\newcommand{\rF}{\mathscr{F}}
\newcommand{\bI}{\mathbb{I}}
\newcommand{\bR}{\mathbb{R}}
\newcommand{\bZ}{\mathbb{Z}}
\let\oldmarginpar\marginpar
\renewcommand{\marginpar}[1]{\oldmarginpar{\scriptsize\texttt{\color{red}{#1}}}}
\numberwithin{equation}{section}
\begin{document}

\title{Limit theorems for sticky SDEs with local times\\ and applications to stochastic homogenization}

\date{\today}

\author{Olga Aryasova\thanks{Corresponding author}$\,\, ^,$\thanks{Institute of Mathematics,
Friedrich Schiller University Jena, Inselplatz 5, 07743 Jena, Germany \emph{and}
Institute of Geophysics, National Academy of Sciences of Ukraine, Palladin ave.\ 32, 03680 Kyiv-142,
Ukraine \emph{and} Igor Sikorsky Kyiv Polytechnic Institute, Beresteiskyi ave.\ 37, 03056, Kyiv, Ukraine;
\texttt{oaryasova@gmail.com}}\ , \
Ilya Pavlyukevich\thanks{Institute of Mathematics, Friedrich Schiller University Jena, Inselplatz 5,
07743 Jena, Germany; \texttt{ilya.pavlyukevich@uni-jena.de}}\ ,
and 
Andrey Pilipenko\thanks{
University of Geneva, Section de math\'ematiques,
UNI DUFOUR 24, rue du G\'en\'eral Dufour, Case postale 64, 1211 Geneva 4, Switzerland \emph{and}
Institute of Mathematics, National Academy of Sciences of Ukraine, Tereshchenkivska Str.\ 3, 01601, Kyiv, Ukraine;
 \texttt{pilipenko.ay@gmail.com}}}


\maketitle

\begin{abstract}
In this paper, we establish a general convergence theorem for solutions of
multivariate stochastic differential equations with countably many singular terms
expressed as integrals with respect to local times. The processes under consideration describe diffusions
in the presence of semipermeable hyperplane interfaces. These interfaces may become sticky
after applying a random time change that depends on the amount of local time accumulated on each interface.

We show that, as the distance between the interfaces tends to zero, the local-time terms converge
to a limiting homogenized drift term. When the interfaces are sticky, the limiting diffusion also decelerates,
meaning that its diffusion coefficient is effectively reduced.

Such limit theorems illustrate a form of stochastic homogenization for
diffusions evolving in a heterogeneous medium interleaved with semipermeable, sticky interfaces.

\end{abstract}

\noindent
\textbf{Keywords:} local times, homogenization, existence and uniqueness of weak solutions,
semipermeable interface, sticky interface, weak convergence, random time change.

\smallskip

\noindent
\textbf{2010 Mathematics Subject Classification:} 60H10, 60F05, 60H17, 60J55 


\section{Setting and the main results}

The classical mathematical theory of homogenization deals with systems whose parameters
fluctuate rapidly on very small spatial scales.
Its original motivation comes from studying composite materials that possess a periodic internal structure.
For instance in heat-conduction problems,
the material's spacial characteristics may vary periodically,
and the limiting behaviour of such systems in the limit as the period tends to zero
can be described by an effective, averaged model.
More precisely, the goal consists in
determining the limit behavior of solutions
$u_\varepsilon$ to boundary value problems such as
$
 -\nabla \big( A_\varepsilon(x/\varepsilon)\,\nabla u_\varepsilon(x) \big) = f
$
in some domain $G \subseteq\bR^d$
with certain boundary conditions on $\partial G$.
The tensor $A_\varepsilon$ is given by  $A_\varepsilon(x)=A(x/\varepsilon)$, where $A(\cdot)$ is supposed to be periodic in $\bR^d$. The objective of homogenization is to identify the effective thermal conductivity $A$
such that the sequence of solutions $u_\varepsilon$ converges to $u$ as $\varepsilon \to 0$,
where $u$ is the solution of the corresponding homogenized equation
$
A \, \Delta u = f.
$

The homogenization problem can be approached using various analytic methods,
including asymptotic expansions or energy methods, see, e.g., \cite{bensoussan1978asymptotic, pavliotis2008multiscale}.
Alternatively, one can exploit the well-known connection between diffusions and second-order
differential operators, employing stochastic approaches. In general,  the literature on
homogenization of regular diffusions is extensive and encompasses both analytic and probabilistic methods;
notable contributions include
\cite{BerJikPap99,chechkin2007homogenization,hairer2008homogenization,JikKozOle94,Makhno-12,pardoux1999homogenization}.

In the present paper, we study a different type of stochastic homogenization problem: the homogenization of a
diffusion process in the presence of narrowly spaced semipermeable and sticky hyperplane interfaces (membranes).

A fundamental example of a diffusion interacting with a membrane is the \emph{skew Brownian motion}, which solves the stochastic
differential equation (SDE)
$X_t = x + W_t + \beta L_t^{0}(X)$, where $L^0$ is the symmetric semimartingale local time of $X$ at zero, and
$\beta\in[-1,1]$ is
the
\emph{permeability} parameter, see \cite{HShepp-81,Lejay-06}. This process
behaves like a standard Brownian motion away from zero, and, roughly speaking, chooses the positive direction
with probability $\frac{1+\beta}{2}$ and the negative direction
with probability $\frac{1-\beta}{2}$.

In our work, we will consider a system of SDEs which contains countably many terms involving the
local times accumulated by the diffusion on a family of interface hyperplanes.

We first set up the mathematical framework within which the problem will be formulated.

Let $\e\in(0,1]$ and $\delta\in(0,1]$ be small parameters,
$\{a_k^\e\}_{k\in\mathbb Z}$ be an increasing sequence of points in $\bR$.
We consider a process $(X^{\e,\delta},Y^{\e,\delta})
=(X^{\e,\delta},Y^{1,\e,\delta},\dots,Y^{d,\e,\delta})$, $X^{\e,\delta}\in \bR$, $Y^{\e,\delta}\in\bR^d$, $d\in\mathbb N_0$,
which is a solution of the
stochastic differential equation 
\ba
\label{e:XY}
X_t^{\e,\delta}&= x+\sum_{l=1}^m \int_0^t \sigma_{l}^0(s, X^{\e,\delta}_s,Y^{\e,\delta}_s)\,\di W^l_s + \int_0^t b^0(s,X^{\e,\delta}_s,Y_s^{\e,\delta})\,\di s
+\sum_{k=-\infty}^\infty \delta \int_0^t \beta(s,a_k^\e,Y_s^{\e,\delta})\,\di L^{a_k^\e}_s (X^{\e,\delta}),\\
Y^{i,\e,\delta}_t&= y^i+\sum_{l=1}^m \int_0^t \sigma^{i}_l(s,X_s^{\e,\delta},Y_s^{\e,\delta})\,\di W^l_s +\int_0^t b^i(s,X_s^{\e,\delta},Y_s^{\e,\delta})\,\di s
+ \sum_{k=-\infty}^\infty \delta\int_0^t  \theta^i(s,a_k^\e,Y_s^{\e,\delta})\,\di L^{a_k^\e}_s (X^{\e,\delta}),\\
& \hspace{10cm} i=1,\dots,d,\ t\in[0,\infty),
\ea
where $W=(W^1,\dots,W^m)$ is a standard $m$-dimensional Brownian motion, $m\in\mathbb N$, and
$L^a(X^{\e,\delta})$ is the  symmetric semimartingale local time of $X^{\e,\delta}$
at $a\in\bR$ (see the end of this Section for precise definitions).

The phase space of the process $(X^{\e,\delta},Y^{\e,\delta})$ may be viewed as $\bR\times\bR^d$,
$d\in\mathbb N_0$, partitioned into thin layers by the countable family of hyperplanes $\{a_k^\e\}\times\bR^d$, $k\in\bZ$.
In the case $d=0$, the component $Y^{\e,\delta}$ is absent, and we simply consider a one-dimensional process $X^{\e,\delta}$
on the real line.

Between the hyperplanes, the process $(X^{\e,\delta},Y^{\e,\delta})$ evolves as a standard diffusion with drift vector
$(b^j)_{0\leq j\leq n}$
and diffusion matrix $(\sigma_l^i)_{0\leq i\leq n, 1\leq l\leq m}$.
Upon hitting a hyperplane $\{a_k^\e\}\times\bR^d$ at a point $(a_k^\e,y)$, it ``chooses'' the positive
or negative $x$-direction with probabilities
$\frac{1+\delta\beta}{2}$ or $\frac{1-\delta\beta}{2}$, respectively, while sliding along the hyperplane
in the direction  $\theta$.
Thus, the function $\theta$ represents the \emph{sliding coefficient} along the interface hyperplane.

To slow down the diffusion upon hitting the interface,
we introduce the continuous additive
functional
\ba
A_t=A_t^{\e,\delta,\lambda}
:=t + \lambda \sum_{k=-\infty}^\infty \int_0^t \gamma(s,a_k^\e,Y_s^{\e,\delta})\,\di L^{a_k^\e}_s (X^{\e,\delta}),
\ea
where
$\gamma=\gamma(t,x,y)$ is a non-negative function and  $\lambda\in(0,1]$
is a small parameter.
Since the mapping $t\mapsto A_t$ is continuous, strictly increasing, $A_0=0$ and diverges to $+\infty$ as $t\to\infty$, its inverse
$A^{-1}$ is well defined and it provides a random time change.

We then define the time-changed process
\ba
\label{e:XYA}
(X^{\e,\delta,\lambda},Y^{\e,\delta,\lambda}):=
(X^{\e,\delta},Y^{\e,\delta})\circ (A^{\e,\delta,\lambda})^{-1}.
\ea
This is a diffusion that slows down upon hitting the membranes
$\{a_k^\e\}\times\bR^d$; in other words, it is a diffusion with semipermeable sticky interfaces.

The aim of this paper is to investigate the asymptotic behavior of
\((X^{\varepsilon,\delta,\lambda}, Y^{\varepsilon,\delta,\lambda})\) as
\(\varepsilon, \delta, \lambda \to 0\).
Depending on the relative rates at which these parameters vanish, we obtain several distinct limiting processes.
The most intriguing regime occurs when
$\varepsilon$, $\delta$ and $\lambda$ are of the
same order.

The model \eqref{e:XY}, \eqref{e:XYA} is primarily motivated by the study of heat propagation in layered composite materials,
where thin sheets with varying permeability are densely interleaved.
For example, \cite{yuksel2012effective} experimentally studied a glass-foam medium reinforced
with aluminium foil and observed that the addition of the foil reduces the effective thermal conductivity at low temperatures.

Similar multilayer structures also appear in chemistry, biology, and physics
(see, e.g., \cite{dudko2004diffusion, grebenkov2010pulsed, moutal2019diffusion, slkezak2021diffusion, Tanner78}).
In particular, molecular diffusion in biological tissues and other micro-heterogeneous media is strongly
affected by the presence of permeable barriers. Such barriers tend to slow down the effective
diffusion over long times relative to free diffusion, even though their influence may
be negligible on short timescales (see, e.g., \cite{dudko2004diffusion} and references therein).

We make the following assumptions about the coefficients in the system \eqref{e:XY} and the location of the membranes.

\smallskip
\noindent

\smallskip
\noindent
\textbf{A}$_{b,\sigma,\beta,\theta}^{C_b}$:
the coefficients $b$, $\sigma$, $\beta$, $\theta$ are continuous and bounded, more precisely,
\ba
b&\in C_b(\bR_+\times\bR\times\bR^d,\bR^{d+1}),\\
\sigma&\in C_b(\bR_+\times\bR\times\bR^d,\bR^{(d+1)\times m}),\\
\beta&\in C_b(\bR_+\times\bR\times\bR^d,\bR),\\
\theta&\in C_b(\bR_+\times\bR\times\bR^d,\bR^d).
\ea
%
%

\smallskip
\noindent
\textbf{A}$^\text{sep}_a$: The interfaces points are well separated, i.e., there is a constant $C\in (0,\infty)$ such that
for all $\e\in(0,1]$
\ba
C^{-1}\leq \inf_k \frac{a_{k+1}^\e- a_{k}^\e }{\e}\leq \sup_k \frac{a_{k+1}^\e- a_k^\e }{\e}\leq C.
\ea

\smallskip
\noindent
\textbf{A}$_{\Sigma^{00}}$:
There is $C\in (0,\infty)$ such that for all $t,x,y$
\ba
\Sigma^{00}(t,x,y)=\sum_{l=1}^m \sigma^0_l(t,x,y)^2\geq C.
\ea

\smallskip
\noindent
\textbf{A}$_{\Sigma}^\text{elliptic}$:
The matrix $\Sigma=(\Sigma^{ij})_{0\leq i,j\leq d}$ with the entries
\ba
\Sigma^{ij}(t,x,y)=\sum_{l=1}^m \sigma^i_l(t,x,y)\sigma^j_l(t,x,y)
\ea
is uniformly positive definite for all $t,x,y$.

As the first main result of the paper, we establish the existence of a weak solution
to equation \eqref{e:XY}, and, in the case of time-independent coefficients that are twice
continuously differentiable, we further prove existence and uniqueness of solutions.

\begin{thm}
\label{t:existU}
1. Let Assumptions $\boldsymbol{\mathrm A}_{b,\sigma,\beta,\theta}^{C_b}$, $\boldsymbol{\mathrm A}^\mathrm{sep}_a$,
$\boldsymbol{\mathrm A}_{\Sigma^{00}}$ hold.
Then, for any $\delta\in(0,1/\|\beta\|_\infty)$ and any $\e\in(0,1]$
the system \eqref{e:XY} has a weak solution $(X^{\e,\delta},Y^{\e,\delta})$.

2. If, in addition, Assumption $\boldsymbol{\mathrm A}_{\Sigma}^\mathrm{elliptic}$ holds,
all coefficients $b$, $\sigma$, $\beta$, $\theta$ are time-independent,
and the functions $\beta$ and $\theta$ are twice continuously differentiable in $(x,y)$
then the solution is unique and enjoys the strong Markov property.
\end{thm}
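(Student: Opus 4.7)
My plan is to remove every local-time term in \eqref{e:XY} by a spatial change of variables, thereby reducing the system to an ordinary SDE with bounded continuous coefficients for which standard existence/uniqueness results apply. The hypothesis $\delta\|\beta\|_\infty<1$ is precisely the range in which the transformation remains invertible, in analogy with the skew Brownian motion case $|\beta|<1$.

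For Part~1, I would construct a bi-Lipschitz homeomorphism $\Phi^\e\colon\bR\times\bR^d\to\bR\times\bR^d$, smooth off the hyperplanes $\{a_k^\e\}\times\bR^d$, with a prescribed jump of $\partial_x\Phi^\e$ across each interface. Applying the generalised symmetric Itô--Tanaka formula to $\Phi^\e(X^{\e,\delta},Y^{\e,\delta})$, the coefficient of $\di L^{a_k^\e}(X^{\e,\delta})$ in $d\Phi^\e(X^{\e,\delta},Y^{\e,\delta})$ decomposes into a drift contribution $\partial_x\Phi^\e\cdot\delta\beta+\partial_y\Phi^\e\cdot\delta\theta$, coming from the local-time terms already present in \eqref{e:XY}, plus the transmission contribution $\tfrac12\bigl(\partial_x\Phi^\e(a_k^\e+,y)-\partial_x\Phi^\e(a_k^\e-,y)\bigr)$ arising from the kink of $\Phi^\e$. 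Forcing this sum to vanish at every $(a_k^\e,y)$ yields an explicit linear relation between the one-sided derivatives of $\Phi^\e$ that, thanks to $\delta\|\beta\|_\infty<1$, can be solved slab by slab while keeping $\partial_x\Phi^\e$ uniformly bounded and bounded away from zero. The transformed pair $(\tilde X,\tilde Y):=\Phi^\e(X^{\e,\delta},Y^{\e,\delta})$ then satisfies a classical SDE with bounded continuous coefficients and diffusion matrix non-degenerate in the $x$-direction by $\boldsymbol{\mathrm A}_{\Sigma^{00}}$. Weak existence for this transformed SDE is standard (Skorokhod, or equivalently the Stroock--Varadhan martingale problem); inverting $\Phi^\e$ and identifying the resulting occupation densities of $\tilde X$ with the symmetric semimartingale local times of $X^{\e,\delta}$ produces the desired weak solution of \eqref{e:XY}.

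For Part~2, the time-independence of the coefficients together with $\beta,\theta\in C^2$ allows one to choose $\Phi^\e$ of class $C^2$ inside each slab and $C^1$ across the hyperplanes, so that the transformed SDE has continuous (in fact Lipschitz) coefficients. Combined with $\boldsymbol{\mathrm A}_\Sigma^\text{elliptic}$, well-posedness of the transformed martingale problem follows from the Stroock--Varadhan theorem for non-degenerate diffusions with continuous coefficients. Since $\Phi^\e$ is a time-independent homeomorphism, weak uniqueness and the strong Markov property transfer back to \eqref{e:XY} via $(\Phi^\e)^{-1}$.

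The principal obstacle is the simultaneous handling of the countably many interfaces: one must verify that the slab-wise construction of $\Phi^\e$ assembles into a global bi-Lipschitz homeomorphism of $\bR\times\bR^d$ with derivatives bounded uniformly in $k\in\bZ$, rather than degenerating as $|k|\to\infty$, and one must also check that the local times of $\tilde X$ pulled back through $(\Phi^\e)^{-1}$ really coincide with the symmetric semimartingale local times of $X^{\e,\delta}$ at each $a_k^\e$ (an application of the occupation-time formula). The uniform separation $\boldsymbol{\mathrm A}^\text{sep}_a$ together with the boundedness of $\beta$ and $\theta$ are the precise ingredients that keep this construction under control.
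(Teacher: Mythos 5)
Your approach — removing the local-time terms by a spatial change of variables and invoking classical SDE theory — is essentially the paper's strategy for Part~2 (uniqueness), but it is \emph{not} how the paper proves Part~1 (existence), and applied to Part~1 it has a genuine gap.

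\textbf{The gap in Part~1.} Under $\boldsymbol{\mathrm A}_{b,\sigma,\beta,\theta}^{C_b}$ the permeability $\beta$ and sliding coefficient $\theta$ are only continuous, and the coefficients may be time-dependent. Any transformation $\Phi^\e$ that cancels the local-time terms in both the $X$- and $Y$-equations must have the form (up to a shift) $F_+(x,y)=xB(y)$, $G_\pm(x,y)=y\mp\delta x\theta(y)$ near each interface, exactly as in the paper's Section~\ref{s:e}. To apply the symmetric It\^o--Tanaka formula and obtain a classical SDE with \emph{continuous} coefficients, $\Phi^\e$ must be $C^2$ in the open slabs; but $\Phi^\e$ inherits its $y$-dependence from $\beta$ and $\theta$, so the transformed drift contains $\nabla_yB$, $\nabla_y\theta$, and second derivatives. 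Under $C_b$ alone these simply do not exist, so the ``transformed classical SDE with bounded continuous coefficients'' is not available. This is precisely why the paper imposes $C^2$ regularity and time-independence only in Part~2. For Part~1 the paper instead uses a compactness argument: it constructs auxiliary processes with time-discretised (``frozen'') coefficients on dyadic intervals, shows each piece exists by Le~Gall's one-membrane result, proves weak relative compactness (Theorem~\ref{t:Zncomp}), identifies the limit as a weak solution (Theorem~\ref{t:Znconv}), and then glues local solutions across membranes, using Lemma~\ref{cor:estimate_hittings} to rule out finite-time accumulation of crossings. None of this requires any smoothness of $\beta$ or $\theta$.

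\textbf{The global bi-Lipschitz claim.} Even under the Part~2 hypotheses, your proposed globally bi-Lipschitz $\Phi^\e$ is problematic, and your statement that $\delta\|\beta\|_\infty<1$ keeps $\partial_x\Phi^\e$ uniformly bounded and bounded away from zero is not correct as stated. Each interface multiplies the slope by a factor of the form $\frac{1-\delta\beta_k}{1+\delta\beta_k}$; across $K$ interfaces (with $K\sim|x|/\e$) the accumulated product behaves like $\exp\!\big(-2\delta\sum_{k\le K}\beta_k\big)$ and can tend to $0$ or $\infty$ as $|x|\to\infty$ when the $\beta_k$'s do not average out. Moreover, in the genuinely multidimensional case $d\ge1$ the Jacobian
\[
J_+(x,y)=\begin{pmatrix} B(y) & x\nabla_yB(y)\\ \delta\theta(y) & \mathrm{Id}-\delta x\nabla_y\theta(y)\end{pmatrix}
\]
degenerates for $|x|$ large, so the change of variables is inherently local in $x$. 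The paper sidesteps both issues by performing the transformation only on a small neighbourhood $\mathcal U(y_0)$ of one point of one membrane, reducing there to the piecewise-coefficient martingale problem of Gao~\cite{gao1993martingale}, and then invoking the Stroock--Varadhan localization (\S6.6 of~\cite{StrVar}), which requires only local well-posedness. Your Part~2 plan would become correct if you replaced the global homeomorphism by this local construction plus localization; your Part~1 plan needs to be abandoned in favour of a compactness argument or some other route that tolerates merely continuous $\beta$ and $\theta$.
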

Since the system \eqref{e:XY} is not a standard SDE, the results of Theorem \eqref{t:existU} are novel and
require significant technical effort.
The proof of existence follows from the compactness argument, and for the uniqueness we locally construct a certain
piece-wise nonlinear
transformation of $(X^{\e,\delta},Y^{\e,\delta})$,
after which we apply the uniqueness result of \cite{gao1993martingale}.

Our next result concerns the limit of $(X^{\e,\delta},Y^{\e,\delta},A^{\e,\delta,\lambda})$ as $\e,\delta,\lambda\to 0$
in a
commensurable scaling regime.

To that end, we impose several additional assumptions on the coefficients.

\smallskip
\noindent
\textbf{A}$_{b,\sigma,\beta,\theta}^{\mathrm{Lip}_b}$:
All coefficients $b,\sigma,\beta,\theta$ are bounded and globally Lipschitz continuous in all variables.

\smallskip
\noindent
\textbf{A}$_\gamma^{C_b}$:
The stickiness parameter $\gamma$ is a bounded continuous non-negative function:
\ba
\gamma&\in C_b(\bR_+\times\bR\times\bR^d,\bR_+).
\ea

\noindent
\textbf{A}$_d^{\mathrm{Lip}_b}$: There is a bounded globally Lipschitz continuous function $d\colon \bR\to \bR_+$,
such that $\inf_{x\in\bR} d(x)>0$ and
the points $\{a_k^\e\}$ satisfy
\ba
\label{e:ad}
\lim_{\e\downarrow 0}\sup_{k\in\bZ} \Big|\frac{a_{k+1}^\e- a_k^\e }{\e} - d(a_k^\e)\Big|=0,
\ea

\begin{rem}
The function $d$ characterizes the ``density'' of interfaces on the $x$-axis separated by a characteristic distance of order $\e$.
For example, the points $\{a_k^\e\}$ defined by the relation
\ba
\label{e:d}
a^\e_k=\int_0^{\e k} d(x)\,\di x,\quad k\in\mathbb Z,
\ea
satisfy \eqref{e:ad}.
\end{rem}

\smallskip
\noindent
\textbf{A}$_{\mathfrak{p}}$: There exists a limit
\ba
\lim_{\delta,\e\downarrow 0}\frac{\delta}{\e}&=\mathfrak{p}\in[0,\infty].
\ea

\smallskip
\noindent
\textbf{A}$_{\mathfrak{q}}$: There exists a limit
\ba
\lim_{\lambda,\e\downarrow 0}\frac{\lambda}{\e}&=\mathfrak{q}\in[0,\infty].
\ea

\begin{thm}
\label{t:A}
Let Assumptions
$\boldsymbol{\mathrm A}_{b,\sigma,\beta,\theta}^{\mathrm{Lip}_b}$,
$\boldsymbol{\mathrm A}_d^{\mathrm{Lip}_b}$,
$\boldsymbol{\mathrm A}_{\Sigma^{00}}$,
be satisfied,
and for each $\e,\delta\in(0,1]$
let $(X^{\e,\delta},Y^{\e,\delta})$
be a (non necessarily unique) weak solution of \eqref{e:XY}.

If $\boldsymbol{\mathrm A}_{\mathfrak{p}}$ holds with $\mathfrak{p}\in [0,\infty)$, then there is weak convergence
\ba
\label{eq:conv_XY}
(X^{\e,\delta},Y^{\e,\delta} ) \Rightarrow (X,Y ),\quad \e,\delta \to 0,
\ea
in $C(\bR_+,\bR\times \bR^d)$, where $(X,Y)$ is a (unique strong) solution of the SDE
\ba
\label{e:XYlim}
X_t&= x+\sum_{l=1}^m \int_0^t \sigma_{l}^0(s,X_s,Y_s)\,\di W^l_s
+ \int_0^t \Big(b^0(s,X_s,Y_s)+\mathfrak{p}\beta(s,X_s,Y_s) \frac{\Sigma^{00}(s,X_s,Y_s)}{d(X_s)}\Big)\,\di s ,\\
Y^{i}_t&= y^i+\sum_{l=1}^m \int_0^t \sigma^{i}_l(s,X_s,Y_s)\,\di W^l_s
+\int_0^t \Big(b^i(s,X_s,Y_s)   +\mathfrak{p}\theta^i(s,X_s,Y_s) \frac{ \Sigma^{00}(s,X_s,Y_s)}{d(X_s)}\Big)\,\di s,
\quad i=1,\dots,d.
\ea
If additionally $\boldsymbol{\mathrm A}_\gamma^{C_b}$ and $\boldsymbol{\mathrm A}_{\mathfrak{q}}$
hold with $\mathfrak{q}\in [0,\infty)$,
then we have convergence of triples
\ba\label{eq:conv_XYA}
(X^{\e,\delta},Y^{\e,\delta},A^{\e,\delta,\lambda}) \Rightarrow (X,Y,A),\quad \e,\delta,\lambda\to 0,
\ea
in $C(\bR_+,\bR\times \bR^d\times\bR_+)$, where $(X,Y)$ satisfies \eqref{e:XYlim} and
\ba\label{e:Alim}
A_t
= \int_0^t \Big(1+\mathfrak{q}\gamma(s,X_s,Y_s)\frac{\Sigma^{00}(s,X_s, Y_s)}{d(X_s)}\Big) \,\di s.
\ea
If $\mathfrak{q} =\infty$ and $\gamma$ is strictly positive, then
  the sequence of inverse functions
  $((A^{\e,\delta, \lambda })^{-1})$ converges in distribution to $0$ as $\e,\delta,\lambda\to 0$.
\end{thm}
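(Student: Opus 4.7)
The plan is to reduce all three assertions to a single homogenization statement for the empirical sum of interface local times: for every $\phi\in C_b(\bR_+\times\bR\times\bR^d)$, jointly with $(X^{\e,\delta},Y^{\e,\delta})$,
\[
\e\sum_{k\in\bZ}\int_0^t\phi(s,a_k^\e,Y^{\e,\delta}_s)\,\di L^{a_k^\e}_s(X^{\e,\delta})
\;\Rightarrow\;\int_0^t\frac{\phi(s,X_s,Y_s)\,\Sigma^{00}(s,X_s,Y_s)}{d(X_s)}\,\di s,\qquad\e,\delta\to 0.
\]
Multiplied by $\delta/\e\to\mathfrak p$ and taken componentwise with $\phi\in\{\beta,\theta^1,\dots,\theta^d\}$, this identifies the drift in \eqref{e:XYlim}; multiplied by $\lambda/\e\to\mathfrak q$ with $\phi=\gamma$ it delivers \eqref{e:Alim}. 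Since $\boldsymbol{\mathrm A}_{b,\sigma,\beta,\theta}^{\mathrm{Lip}_b}$ and $\boldsymbol{\mathrm A}_d^{\mathrm{Lip}_b}$ make \eqref{e:XYlim} a Lipschitz SDE with $d$ bounded below, pathwise uniqueness holds and subsequential convergence upgrades to convergence of the whole family.

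\textbf{Tightness and identification.} The principal a priori bound is
\[
\E\Bigl[\Bigl(\delta\sum_k L^{a_k^\e}_t(X^{\e,\delta})\Bigr)^p\Bigr]\le C(t,p),\qquad\e,\delta\in(0,1],\ p\ge 1,
\]
which I would obtain from $\boldsymbol{\mathrm A}_a^{\mathrm{sep}}$ (only $O(1/\e)$ interfaces meet the range of $X^{\e,\delta}_{[0,t]}$), $L^p$-bounds on $\sup_a L^a_t(X^{\e,\delta})$ via BDG together with the boundedness of $\Sigma^{00}$, and the boundedness of $\delta/\e$. Combined with $\boldsymbol{\mathrm A}_{b,\sigma,\beta,\theta}^{\mathrm{Lip}_b}$, Kolmogorov's criterion gives tightness of $(X^{\e,\delta},Y^{\e,\delta})$ in $C(\bR_+,\bR\times\bR^d)$. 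For the identification, pass to a subsequence, apply Skorokhod's representation so that $(X^{\e,\delta},Y^{\e,\delta})\to(X,Y)$ uniformly on compacts a.s., and, using $(a_{k+1}^\e-a_k^\e)/\e\to d(a_k^\e)$ uniformly in $k$ by $\boldsymbol{\mathrm A}_d^{\mathrm{Lip}_b}$, rewrite
\[
\e\sum_k\int_0^t\phi(s,a_k^\e,Y^{\e,\delta}_s)\,\di L^{a_k^\e}_s(X^{\e,\delta})=\sum_k\bigl(1+o(1)\bigr)\frac{a_{k+1}^\e-a_k^\e}{d(a_k^\e)}\int_0^t\phi(s,a_k^\e,Y^{\e,\delta}_s)\,\di L^{a_k^\e}_s(X^{\e,\delta}).
\]
This is a spatial Riemann sum for the (compactly supported) function $a\mapsto d(a)^{-1}\int_0^t\phi(s,a,Y^{\e,\delta}_s)\,\di L^a_s(X^{\e,\delta})$, whose integral equals, by the generalised occupation-time formula, $\int_0^t\phi(s,X^{\e,\delta}_s,Y^{\e,\delta}_s)\Sigma^{00}(s,X^{\e,\delta}_s,Y^{\e,\delta}_s)/d(X^{\e,\delta}_s)\,\di s$. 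The main obstacle is to justify the Riemann approximation uniformly in $\e$, since $X\mapsto L^a(X)$ is not Skorokhod-continuous and the integrand is random; I would handle this by first mollifying $\phi$ in $a$ on a scale $\eta$, using Trotter's joint continuity of $(t,a)\mapsto L^a_t$ (with moduli controlled uniformly in $\e$ by the ellipticity $\boldsymbol{\mathrm A}_{\Sigma^{00}}$) to control the Riemann error, then passing $\e\to 0$, and finally letting $\eta\to 0$.

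\textbf{Assembly and the degenerate case $\mathfrak q=\infty$.} Inserting the homogenisation statement into \eqref{e:XY}, multiplied by $\delta/\e$, and into $A^{\e,\delta,\lambda}$, multiplied by $\lambda/\e$, identifies every limit point as a solution of \eqref{e:XYlim} and \eqref{e:Alim}; convergence of the martingale terms to the appropriate Brownian integrals follows from convergence of their quadratic variations and the martingale CLT. When $\mathfrak q=\infty$ and $\gamma\ge\gamma_0>0$, the same homogenisation statement (applied with $\e$ only, separately from the diverging factor $\lambda/\e$) yields $\e\sum_k\int_0^t\gamma(s,a_k^\e,Y^{\e,\delta}_s)\,\di L^{a_k^\e}_s\to\int_0^t\gamma\,\Sigma^{00}/d\,\di s>0$ in probability for every $t>0$, whence
\[
A^{\e,\delta,\lambda}_t\;\ge\;\frac{\lambda}{\e}\cdot\e\sum_k\int_0^t\gamma(s,a_k^\e,Y^{\e,\delta}_s)\,\di L^{a_k^\e}_s\;\longrightarrow\;+\infty\quad\text{in probability},\qquad t>0.
\]
Monotonicity of $A^{\e,\delta,\lambda}$ on $\bR_+$ translates this into $(A^{\e,\delta,\lambda})^{-1}(t)\to 0$ in probability for each $t\ge 0$, and, since the limit is the zero process, the convergence is in distribution (indeed uniform on compacts by a Dini-type argument for monotone families).
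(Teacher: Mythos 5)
Your overall strategy—reduce everything to a single homogenization limit for $\e\sum_k\int_0^\cdot\phi(s,a_k^\e,Y^{\e,\delta}_s)\,\di L^{a_k^\e}_s(X^{\e,\delta})$ and then feed in $\phi\in\{\beta,\theta^i,\gamma\}$ together with the ratios $\delta/\e\to\mathfrak p$, $\lambda/\e\to\mathfrak q$—is exactly the paper's; this is their Theorem~\ref{thm:conv_CAF} combined with the assembly in Section~\ref{sec:proofs}. Where you diverge is in how you propose to prove that homogenization limit. The paper works \emph{temporally}: it introduces the successive hitting times $\tau_j^{\e,\delta}$ of the membrane set, establishes moment estimates at the scale of one strip (Lemma~\ref{cor:estimate_hittings}), expands the conditional expectations of local-time increments and crossing times (Lemmas~\ref{lem:accu_estimates}, \ref{l:estim_X}), and then uses a martingale concentration argument (Lemmas~\ref{l:loctime}, \ref{l:55}, \ref{e:tot}) to replace random increments by their conditional means, accumulating $\sim N\e^{-2}$ small summands each of order $\e^2$. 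You instead work \emph{spatially}: interpret the discrete sum as a Riemann sum for $a\mapsto d(a)^{-1}\int_0^t\phi(s,a,Y_s)\,\di L^a_s(X^{\e,\delta})$ and invoke the occupation-density formula. If this could be made rigorous it would be shorter and more transparent; the occupation-time identity you write down is indeed valid pathwise even with the time-dependent random integrand $\phi(s,\cdot,Y_s)$.

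However, there is a genuine gap at exactly the step you flag. For fixed $\e,\delta$, the map $a\mapsto L^a_t(X^{\e,\delta})$ is \emph{not} continuous: $X^{\e,\delta}$ has a finite-variation part carrying atomic mass $\delta\beta(\cdot)L^{a_k^\e}_\cdot$ at each membrane, and a standard identity for semimartingale local times gives $L^{a_k^\e+}_t - L^{a_k^\e-}_t = 2\int_0^t\bI(X^{\e,\delta}_s=a_k^\e)\,\di V_s$, which is of order $\delta L^{a_k^\e}_t$. These jumps sit precisely at the partition points $a_k^\e$ of your Riemann sum. Trotter's theorem concerns Brownian motion (or, in its extensions, continuous local martingales), and gives joint continuity with a modulus coming from the quadratic variation; it does not apply to $X^{\e,\delta}$ with its singular drift, and in particular cannot bound the jump contributions. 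Mollifying $\phi$ in $a$ does not help either, since the discontinuity lives in $L^a$, not in $\phi$. To rescue the approach you would have to (i) quantify the modulus of continuity of $a\mapsto L^a_t(X^{\e,\delta})$ between the atoms, uniformly in $\e,\delta$, via a Kolmogorov/BDG argument on the Tanaka formula; (ii) show that the aggregate jump mass, which is of order $\delta\sum_k L^{a_k^\e}_t=O(\delta/\e)=O(1)$, contributes only $O(\e)$ to the Riemann error after being multiplied by the mesh; and (iii) justify that these estimates are uniform in $\omega$ at the level needed. None of this is present in the proposal, and the invoked tool does not do the work. This is precisely the difficulty the paper's temporal argument is designed to sidestep: it never needs spatial regularity of $a\mapsto L^a$ at all.

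A secondary, smaller concern: your a priori $L^p$ bound on $\delta\sum_k L^{a_k^\e}_t$ is asserted from ``$O(1/\e)$ interfaces meet the range'' plus sup-local-time estimates, but controlling the range of $X^{\e,\delta}$ a priori (before tightness is established) already requires a bound on exactly this local-time sum; the paper avoids the circularity by proving Lemma~\ref{e:tot} through the crossing-time machinery first and only then deducing tightness (Lemma~\ref{cor:weak_comp_localT}). Your treatment of the $\mathfrak q=\infty$ case, and the appeal to Lipschitz uniqueness of the limit SDE and to time-change continuity for the corollary, are fine and match the paper.
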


\begin{cor}
\label{t:hom}
Let the processes $X,Y, A$ be from Theorem \ref{t:A}.
If $\mathfrak{p}\in [0,\infty)$ and $\mathfrak{q}\in [0,\infty)$ then we have convergence
\ba
\label{eq:limit_hom1}
(X^{\e,\delta},Y^{\e,\delta})\circ (A^{\e,\delta,\lambda})^{-1} \Rightarrow (X,Y)\circ A^{-1}, \quad \e,\delta,\lambda\to 0.
\ea
In particular, the limit process $(\widehat X,\widehat Y):= (X,Y)\circ A^{-1} $
is a (unique weak) solution to the SDE
\ba
\label{e:XYhatlim}
\widehat X_t&= x+\sum_{l=1}^m \int_0^t \frac{\sigma_{l}^0(s,\widehat X_s,\widehat Y_s)}{\sqrt{1+\mathfrak{q}\gamma(s,\widehat X_s,\widehat Y_s)\frac{\Sigma^{00}(s,\widehat X_s,\widehat Y_s)}{d(\widehat X_s)}}}\,\di W^l_s
+ \int_0^t \frac{b^0(s,\widehat X_s,\widehat Y_s)+\mathfrak{p}\beta(s,\widehat X_s,\widehat Y_s) \frac{\Sigma^{00}(s,\widehat X_s,\widehat Y_s)}{d(\widehat X_s)} }{1+\mathfrak{q}\gamma(s,\widehat X_s,\widehat Y_s)\frac{\Sigma^{00}(s,\widehat X_s, \widehat Y_s)}{d(X_s)}} \,\di s ,\\
\widehat Y^{i}_t&= y^i+\sum_{l=1}^m \int_0^t \frac{\sigma_{l}^i(s,\widehat X_s,\widehat Y_s)}{\sqrt{1+\mathfrak{q}\gamma(s,\widehat X_s,\widehat Y_s)\frac{\Sigma^{00}(s,\widehat X_s, \widehat Y_s)}{d(\widehat X_s)}}}\,\di W^l_s
+\int_0^t \frac{b^i(s,\widehat X_s,\widehat Y_s)   +\mathfrak{p}\theta^i(s,\widehat X_s,\widehat Y_s) \frac{ \Sigma^{00}(s,\widehat X_s,\widehat Y_s)}{d(\widehat X_s)}}{1+\mathfrak{q}\gamma(s,\widehat X_s,\widehat Y_s)\frac{\Sigma^{00}(s,\widehat X_s, \widehat Y_s)}{d(\widehat X_s)}}\,\di s,\\
&\quad i=1,\dots,d.
\ea
If $\mathfrak{q} =\infty$ and $\gamma$ is strictly positive,   then
\ba
\label{eq:limit_hom2}
(X^{\e,\delta},Y^{\e,\delta})\circ (A^{\e,\delta,\lambda})^{-1} \Rightarrow
(x,y), \quad \e,\delta,\lambda\to 0.
\ea
\end{cor}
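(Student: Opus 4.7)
The plan is to derive the corollary directly from the joint convergence \eqref{eq:conv_XYA} of Theorem~\ref{t:A} together with the continuous mapping theorem applied to the time-change functional $(f,g,h)\mapsto(f,g)\circ h^{-1}$, and then to identify the limiting SDE \eqref{e:XYhatlim} through the classical time-change formula for It\^o integrals. Under $\boldsymbol{\mathrm A}_\gamma^{C_b}$ the approximating functionals $A^{\e,\delta,\lambda}$ and the limit $A$ in \eqref{e:Alim} are continuous, strictly increasing with initial value $0$, and diverge to $+\infty$ (since $\dot A\geq 1$), so their inverses are well defined, continuous and strictly increasing.

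For the convergence \eqref{eq:limit_hom1} I would invoke the pathwise fact that whenever $h_n\to h$ locally uniformly in $C(\bR_+,\bR_+)$ with $h_n,h$ continuous, strictly increasing, $h_n(0)=h(0)=0$ and $h(t)\to\infty$, one has $h_n^{-1}\to h^{-1}$ locally uniformly, and consequently if also $(f_n,g_n)\to(f,g)$ locally uniformly, then $(f_n,g_n)\circ h_n^{-1}\to(f,g)\circ h^{-1}$ locally uniformly. This makes the composition map continuous at the limit point in the product topology of $C(\bR_+,\bR\times\bR^d\times\bR_+)$. Applying Skorokhod's representation theorem to the convergence \eqref{eq:conv_XYA} of Theorem~\ref{t:A}, the joint convergence can be realized almost surely, and then \eqref{eq:limit_hom1} follows pathwise by the continuity just described.

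To obtain \eqref{e:XYhatlim} for $(\widehat X,\widehat Y)=(X,Y)\circ A^{-1}$, I would apply the time-change formula for stochastic integrals: setting $\tau_t:=A^{-1}_t$ and $\widehat W^l_t:=\int_0^{\tau_t}\sqrt{\dot A_s}\,\di W^l_s$, the processes $\widehat W^1,\dots,\widehat W^m$ form an $m$-dimensional Brownian motion with respect to the filtration $(\cF_{\tau_t})_{t\geq 0}$, since $\langle \widehat W^k,\widehat W^l\rangle_t=\int_0^{\tau_t}\dot A_s\,\di s\cdot \delta_{kl}=t\delta_{kl}$. Changing variables $s=\tau_u$, $\di s=\di u/\dot A_{\tau_u}$ in \eqref{e:XYlim} and using the explicit form $\dot A_s=1+\mathfrak{q}\gamma(s,X_s,Y_s)\Sigma^{00}(s,X_s,Y_s)/d(X_s)$ produces precisely \eqref{e:XYhatlim}; the square root in the diffusion coefficient is the standard effect of the Brownian rescaling, while the factor $\dot A^{-1}$ in the drift comes from the substitution $\di s\to\di u/\dot A$. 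Weak uniqueness for \eqref{e:XYhatlim} is automatic, since any weak solution composed with the corresponding additive functional must coincide in law with the strong solution $(X,Y)$ of \eqref{e:XYlim}.

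For the singular regime $\mathfrak{q}=\infty$ with $\gamma\geq\gamma_0>0$, the aim is to show $A^{\e,\delta,\lambda}_t\to\infty$ in probability for every $t>0$, which by monotonicity forces $(A^{\e,\delta,\lambda})^{-1}\to 0$ locally uniformly in probability and hence \eqref{eq:limit_hom2}. Bounding $A^{\e,\delta,\lambda}_t\geq t+\lambda\gamma_0\sum_k L^{a_k^\e}_t(X^{\e,\delta})$ and exploiting the same homogenization/occupation-time estimate that underlies Theorem~\ref{t:A} — namely, that $\e\sum_k L^{a_k^\e}_t(X^{\e,\delta})$ converges to $\int_0^t\Sigma^{00}(s,X_s,Y_s)/d(X_s)\,\di s>0$ — gives $A^{\e,\delta,\lambda}_t \gtrsim (\lambda/\e)\cdot\mathrm{const}\to\infty$. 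The main obstacle I anticipate is the quantitative verification that the composition map is continuous at the limit point in the required topology: this requires the strict monotonicity $\dot A\geq 1$ and the absence of flat pieces in $A$, both of which are built in here and will need to be handled carefully when transferring locally uniform convergence of the direct functionals to locally uniform convergence of their inverses.
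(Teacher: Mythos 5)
Your proposal is correct and follows essentially the same route as the paper: the paper proves \eqref{eq:limit_hom1} by the continuous mapping theorem combined with the composition-with-inverse result (Theorem 13.2.1 in \cite{whitt02}), identifies \eqref{e:XYhatlim} via the random time change for SDEs (Section (b), p.~225 in \cite{IW89}), and obtains \eqref{eq:limit_hom2} from the convergence $(A^{\e,\delta,\lambda})^{-1}\to 0$ already asserted in Theorem \ref{t:A}. You merely unpack the details (Skorokhod representation, the explicit Brownian rescaling, and the occupation-time lower bound for $A^{\e,\delta,\lambda}$) that the paper delegates to these references.
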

The proof of these results will be given in in Section \ref{sec:proofs}.

The next result treats the degenerate case when
the
spacing between the membranes dominates the microscopic
scale, i.e., when
$\delta/\varepsilon\to \infty$.
In this case, the stochastic fluctuations
average out completely, and the limit becomes deterministic.

\smallskip
\noindent
$\boldsymbol{\mathrm A}_{\mathfrak{r}}$: There exist a limit
\ba
&\lim_{\lambda,\delta\downarrow 0}\frac{\lambda}{\delta}=\mathfrak{r}\in[0,\infty].
\ea

\begin{thm}\label{thm:limitODE}
Let Assumptions
$\boldsymbol{\mathrm A}_{b,\sigma,\beta,\theta}^{\mathrm{Lip}_b}$,
$\boldsymbol{\mathrm A}_d^{\mathrm{Lip}_b}$,
$\boldsymbol{\mathrm A}_{\Sigma^{00}}$, and
$\boldsymbol{\mathrm A}_{\mathfrak{p}}$ with $\mathfrak{p}=\infty$
be satisfied.
Then, we have convergence in distribution
\ba
\label{eq:limit_isODE}
(\widetilde X^{\e,\delta}_t,\widetilde Y^{\e,\delta}_t)_{t\in[0,\infty)}
:=(X^{\e,\delta}_{t \e/\delta},Y^{\e,\delta}_{t \e/\delta})_{t\in[0,\infty)}
\Rightarrow (\widetilde X_t,\widetilde Y_t)_{t\in[0,\infty)}, \quad \e,\delta\to 0.
\ea
where $\widetilde X, \widetilde Y$ satisfy the ODE
\ba
\label{e:XY_1membrane-degenODE}
\widetilde X_t&= x+ \int_0^t \beta(0,\widetilde X_s,\widetilde Y_s) \frac{\Sigma^{00}(0,\widetilde X_s,\widetilde Y_s)}{d(\widetilde X_s)} \,\di s ,\\
\widetilde Y^{i}_t&= y^i+ \int_0^t  \theta^i(0,\widetilde X_s,\widetilde Y_s) \frac{ \Sigma^{00}(0,\widetilde X_s,\widetilde Y_s)}{d(\widetilde X_s)}\,\di s,
&\quad i=1,\dots,d.
\ea
Set 
\ba
\widetilde A^{\e,\delta, \lambda }_t
:=A^{\e,\delta, \lambda }_{t\e/\delta}
=t\e/\delta +\lambda \sum_{k=-\infty}^\infty \int_0^{t\e/\delta} \gamma( s ,a_k^\e,Y_s^{\e,\delta})\,\di L^{a_k^\e}_s (X^{\e,\delta}).
\ea
and assume additionally that $\boldsymbol{\mathrm A}_\gamma^{C_b}$ and $\boldsymbol{\mathrm A}_{\mathfrak{r}}$ hold.

If $\mathfrak{r}\in[0,\infty)$,  then
\ba
\label{eq:conv_XYA_deg}
(\widetilde  X^{\e,\delta},\widetilde  Y^{\e,\delta},\widetilde  A^{\e,\delta, \lambda })
\Rightarrow
(\widetilde  X,\widetilde  Y,\widetilde  A), \quad \e,\delta,\lambda\to 0.
\ea
where
\ba
\widetilde  A_t:=
\int_0^t \mathfrak{r}\gamma(0,\widetilde  X_s, \widetilde  Y_s)
\frac{\Sigma^{00}(0,\widetilde  X_s,\widetilde  Y_s)}{d(\widetilde X_s)}  \,\di s.
 \ea
If $\mathfrak{r} =\infty$ and $\gamma$ is strictly positive,
then the sequence of inverse functions
  $((\widetilde A^{\e,\delta, \lambda })^{-1})$ converges in distribution to $0$ as $\e,\delta,\lambda\to 0$.
 \end{thm}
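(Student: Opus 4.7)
The plan is to reduce the degenerate regime $\mathfrak p=\infty$ to the homogenization arguments already developed for Theorem~\ref{t:A} by means of a deterministic linear time change with factor $\alpha:=\e/\delta\to 0$. Define $\widetilde W^l_u:=\alpha^{-1/2}W^l_{u\alpha}$ for $l=1,\ldots,m$; these are independent standard Brownian motions. Using the invariance of the semimartingale local time under continuous deterministic reparametrization, $L^{a}_t(\widetilde X^{\e,\delta})=L^{a}_{t\alpha}(X^{\e,\delta})$, a direct substitution in \eqref{e:XY} yields
\begin{align*}
\widetilde X^{\e,\delta}_t&=x+\sqrt{\alpha}\sum_{l=1}^m\int_0^t \sigma^0_l(u\alpha,\widetilde X^{\e,\delta}_u,\widetilde Y^{\e,\delta}_u)\,\di\widetilde W^l_u+\alpha\int_0^t b^0(u\alpha,\widetilde X^{\e,\delta}_u,\widetilde Y^{\e,\delta}_u)\,\di u\\
&\quad+\delta\sum_{k\in\bZ}\int_0^t\beta(u\alpha,a_k^\e,\widetilde Y^{\e,\delta}_u)\,\di L^{a_k^\e}_u(\widetilde X^{\e,\delta}),
\end{align*}
and analogously for $\widetilde Y^{\e,\delta}$. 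On this rescaled clock the ordinary drift and martingale parts are of sizes $O(\alpha)$ and $O(\sqrt{\alpha})$ respectively and vanish uniformly on compacts, while the local time term is of order $\delta\cdot(1/\e)\cdot\alpha=1$ and carries the entire limiting dynamics.

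The central step is to identify the limit of the local-time sum, for which I would recycle the homogenization-of-local-times estimate established en route to Theorem~\ref{t:A}: for any bounded continuous $f$ one has
\begin{equation*}
\delta\sum_{k\in\bZ}\int_0^t f(u\alpha,a_k^\e,\widetilde Y^{\e,\delta}_u)\,\di L^{a_k^\e}_u(\widetilde X^{\e,\delta})\Longrightarrow \int_0^t f(0,\widetilde X_u,\widetilde Y_u)\frac{\Sigma^{00}(0,\widetilde X_u,\widetilde Y_u)}{d(\widetilde X_u)}\,\di u,
\end{equation*}
the continuity of $f$ in time allowing the replacement of $f(u\alpha,\cdot)$ by $f(0,\cdot)$ along any tight limit. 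Tightness of $(\widetilde X^{\e,\delta},\widetilde Y^{\e,\delta})$ in $C(\bR_+,\bR\times\bR^d)$ follows from uniform moment and H\"older-modulus bounds based on the above representation, using $\boldsymbol{\mathrm A}_{b,\sigma,\beta,\theta}^{\mathrm{Lip}_b}$, $\boldsymbol{\mathrm A}_{\Sigma^{00}}$ and $\boldsymbol{\mathrm A}_d^{\mathrm{Lip}_b}$ (the latter providing $\inf d>0$). Every subsequential weak limit $(\widetilde X,\widetilde Y)$ then satisfies the ODE \eqref{e:XY_1membrane-degenODE}; since the coefficients of this ODE are globally Lipschitz, its solution is unique and the full family converges.

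The joint convergence with $\widetilde A^{\e,\delta,\lambda}$ is handled identically: writing $\lambda=(\lambda/\delta)\cdot\delta$ with $\lambda/\delta\to\mathfrak r$, the occupation term in $\widetilde A^{\e,\delta,\lambda}_t=t\alpha+\lambda\sum_k\int_0^t\gamma(u\alpha,\cdot)\,\di L^{a_k^\e}_u(\widetilde X^{\e,\delta})$ has exactly the form above and converges to $\mathfrak r\int_0^t\gamma(0,\widetilde X_u,\widetilde Y_u)\Sigma^{00}(0,\widetilde X_u,\widetilde Y_u)/d(\widetilde X_u)\,\di u$ when $\mathfrak r\in[0,\infty)$, while the additive term $t\alpha$ vanishes. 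For $\mathfrak r=\infty$ with $\gamma\geq c>0$, the same homogenization identity together with $\Sigma^{00}\geq C>0$ and boundedness of $d$ forces $\widetilde A^{\e,\delta,\lambda}_t\to+\infty$ in probability for every fixed $t>0$; monotonicity of $t\mapsto\widetilde A^{\e,\delta,\lambda}_t$ then yields $(\widetilde A^{\e,\delta,\lambda})^{-1}_T\to 0$ in distribution for every $T>0$.

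The hard part is the local-time homogenization identity in this degenerate regime, because the martingale part of $\widetilde X^{\e,\delta}$ vanishes in the limit and so can no longer regularize the interaction of the process with the interface grid on the rescaled clock. The remedy is precisely the identity $L^{a_k^\e}_u(\widetilde X^{\e,\delta})=L^{a_k^\e}_{u\alpha}(X^{\e,\delta})$: the local times are still governed by the nondegenerate Brownian scaling of the unrescaled process $X^{\e,\delta}$ on its microscopically small horizon $u\alpha$, so the occupation-time-formula argument underlying Theorem~\ref{t:A} applies essentially verbatim, once the continuity-in-time shift $f(u\alpha,\cdot)\to f(0,\cdot)$ has been justified.
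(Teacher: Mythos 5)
Your proposal follows essentially the same route as the paper: rescale time by $\alpha=\e/\delta$, use the local-time invariance $L^{a}_t(\widetilde X^{\e,\delta})=L^{a}_{t\alpha}(X^{\e,\delta})$, observe that the drift and martingale parts are $O(\alpha)$ and $O(\sqrt\alpha)$ and vanish, invoke the homogenization-of-local-times limit (the paper's Lemma \ref{lem:LLN_crossings_general} combined with Lemma \ref{lem:conv_integrals_ordinary}) to identify the surviving contribution, and conclude via tightness plus uniqueness of the limiting ODE; the treatment of $\widetilde A^{\e,\delta,\lambda}$ and of $\mathfrak r=\infty$ is likewise the same. The only cosmetic difference is that you state the local-time homogenization limit directly for a general bounded continuous $f$, whereas the paper proves it first for $f=d/\Sigma^{00}$ and then extends via the Riemann–Stieltjes convergence lemma, but this is the same argument repackaged.
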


\begin{rem}
To carry out the random time change of the processes on the time scale $t\e/\delta$, we first need the following elementary result.
Let $f\colon\bR_+\to\bR$ and $g\colon\bR_+\to\bR_+$ be continuous functions, and let $g$ be strictly monotone with $g(0)=0$.
For any $c\in(0,\infty)$, denote
\ba
f_c(t):=f(ct), \quad g_c(t):=g(ct).
\ea
Then, $g_c^{-1}(t)=c^{-1}g^{-1}(t)$, and therefore
\ba
\label{e:fg}
f_c\circ g_c^{-1}=f\circ g^{-1}.
\ea
In particular, \eqref{e:fg} immediately implies that
\ba
(X^{\e,\delta},Y^{\e,\delta} )\circ (A^{\e,\delta,\lambda})^{-1}=(\widetilde X^{\e,\delta},\widetilde Y^{\e,\delta} )\circ (\widetilde A^{\e,\delta,\lambda})^{-1}
\ea
\end{rem}

\begin{cor}
Let conditions of Theorem \ref{thm:limitODE} hold and, additionally, assume that
\ba
\inf_{t,x,y}\gamma(t,x,y)>0.
\ea
If
$\mathfrak{r} =(0,\infty)$, then
\ba
\label{eq:conv_XYA_comp_d}
(X^{\e,\delta},Y^{\e,\delta} )\circ (A^{\e,\delta,\lambda})^{-1}=
(\widetilde X^{\e,\delta},\widetilde Y^{\e,\delta} )\circ (\widetilde A^{\e,\delta,\lambda})^{-1}
\Rightarrow (\widetilde X  ,\widetilde Y  )\circ \widetilde A^{-1}, \quad \e,\delta,\lambda\to 0.
 \ea
In particular, the limit process $(\widehat X, \widehat Y):=(\widetilde X,\widetilde Y  )\circ \widetilde A^{-1}$
satisfies the ODE
\ba
\label{e:XY_1membrane-degen_ODE1}
\widehat X_t
&= x+ \int_0^t
\frac{\beta(0,\widehat X_s,\widehat Y_s)  }{\mathfrak{r}\gamma(0,\widehat  X_s, \widehat Y_s) } \,\di s ,\\
\widehat Y^{i}_t&= y^i+ \int_0^t  \frac{\theta^i(0,\widehat X_s,\widehat Y_s)}{\mathfrak{r}\gamma(0,\widehat X_s, \widehat  Y_s)
}\,\di s,
&\quad i=1,\dots,d.
\ea
If $\mathfrak{r} =\infty$, then
\ba
(X^{\e,\delta},Y^{\e,\delta} )\circ (A^{\e,\delta,\lambda})^{-1}=
(\widetilde X^{\e,\delta},\widetilde Y^{\e,\delta})\circ (\widetilde A^{\e,\delta,\lambda})^{-1} \Rightarrow
(x,y), \quad \e,\delta,\lambda\to 0.
\ea
 \end{cor}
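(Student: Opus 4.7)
The plan is to deduce the corollary from Theorem \ref{thm:limitODE} via a continuous–mapping argument applied to the random time change map $(f,g)\mapsto f\circ g^{-1}$, separately treating the two regimes $\mathfrak{r}\in(0,\infty)$ and $\mathfrak{r}=\infty$.

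\textbf{Step 1 (equality of processes).} The identity $(X^{\e,\delta},Y^{\e,\delta})\circ (A^{\e,\delta,\lambda})^{-1}=(\widetilde X^{\e,\delta},\widetilde Y^{\e,\delta})\circ (\widetilde A^{\e,\delta,\lambda})^{-1}$ is already recorded in the Remark preceding the corollary, as a direct consequence of the elementary identity \eqref{e:fg}. Hence it suffices to analyze the right-hand side.

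\textbf{Step 2 (case $\mathfrak{r}\in(0,\infty)$).} Theorem \ref{thm:limitODE} gives the joint weak convergence $(\widetilde X^{\e,\delta},\widetilde Y^{\e,\delta},\widetilde A^{\e,\delta,\lambda})\Rightarrow (\widetilde X,\widetilde Y,\widetilde A)$ in $C(\bR_+,\bR\times\bR^d\times\bR_+)$. Under the standing assumption $\inf_{t,x,y}\gamma>0$, together with $\boldsymbol{\mathrm A}_{\Sigma^{00}}$ and boundedness of $d$, the integrand in
\ben
\widetilde A_t=\int_0^t \mathfrak{r}\,\gamma(0,\widetilde X_s,\widetilde Y_s)\,\frac{\Sigma^{00}(0,\widetilde X_s,\widetilde Y_s)}{d(\widetilde X_s)}\,\di s
\een
is a.s.\ bounded away from zero, so $\widetilde A$ is a.s.\ strictly increasing, continuous, $\widetilde A_0=0$ and $\widetilde A_t\to \infty$; the prelimit $\widetilde A^{\e,\delta,\lambda}$ enjoys the same properties by construction. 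I would then invoke the well-known continuity of the time–change functional (cf.\ Whitt's composition theorem in the Skorokhod topology, which restricts nicely to the uniform-on-compacts topology here since all paths are continuous): on the subset of $C(\bR_+,\bR\times\bR^d)\times C(\bR_+,\bR_+)$ where the second coordinate is continuous, strictly increasing, starts at zero and tends to $+\infty$, the map $(f,g)\mapsto f\circ g^{-1}$ is continuous. Applying the continuous mapping theorem to the joint convergence yields \eqref{eq:conv_XYA_comp_d}.

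\textbf{Step 3 (identification of the ODE).} Having established \eqref{eq:conv_XYA_comp_d}, I would identify the limit $(\widehat X,\widehat Y)$ by a deterministic change of variable. Since $\widetilde A$ is $C^1$ with derivative $\widetilde A'(s)=\mathfrak{r}\gamma(0,\widetilde X_s,\widetilde Y_s)\Sigma^{00}(0,\widetilde X_s,\widetilde Y_s)/d(\widetilde X_s)$, setting $s=\widetilde A^{-1}(t)$ in the ODE \eqref{e:XY_1membrane-degenODE} for $(\widetilde X,\widetilde Y)$ and using $\di s=\di t/\widetilde A'(s)$ produces exactly \eqref{e:XY_1membrane-degen_ODE1}; the factors $\beta\,\Sigma^{00}/d$ and $\theta^i\,\Sigma^{00}/d$ in the numerator cancel against the same factors appearing in $\widetilde A'$, leaving only $\beta/(\mathfrak{r}\gamma)$ and $\theta^i/(\mathfrak{r}\gamma)$.

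\textbf{Step 4 (case $\mathfrak{r}=\infty$).} Here Theorem \ref{thm:limitODE} already gives $(\widetilde A^{\e,\delta,\lambda})^{-1}\Rightarrow 0$ in $C(\bR_+,\bR_+)$ (uniformly on compacts). Theorem \ref{thm:limitODE} also provides tightness of $(\widetilde X^{\e,\delta},\widetilde Y^{\e,\delta})$ in $C(\bR_+,\bR\times\bR^d)$, with any subsequential limit starting from $(x,y)$ and having continuous paths. Joint tightness and Skorokhod representation reduce the problem to the a.s.\ statement: if continuous paths $f_n\to f$ with $f(0)=(x,y)$ and non-decreasing $h_n\to 0$ uniformly on compacts, then $f_n\circ h_n\to (x,y)$ uniformly on compacts, which is immediate by continuity of $f$ at $0$. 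This yields the second assertion.

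\textbf{Main obstacle.} The only non-routine point is the continuity of $(f,g)\mapsto f\circ g^{-1}$ at the limit $(\widetilde X,\widetilde Y,\widetilde A)$: it requires the strict monotonicity and the $g(\infty)=\infty$ property of $\widetilde A$, which in turn rely crucially on the hypothesis $\inf\gamma>0$ and on $\mathfrak{r}>0$. Without the lower bound on $\gamma$ the set of continuity points would be smaller and the continuous mapping theorem could fail, so verifying these qualitative properties of $\widetilde A$ is the step I would check carefully; everything else is an application of Theorem \ref{thm:limitODE} plus standard functional calculus.
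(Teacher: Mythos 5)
Your proposal is correct and follows essentially the same route as the paper, which likewise reduces the corollary to the joint convergence \eqref{eq:conv_XYA_deg} and then applies the continuous mapping theorem together with Whitt's composition theorem (Theorem 13.2.1 in \cite{whitt02}), handling the case $\mathfrak{r}=\infty$ via $(\widetilde A^{\e,\delta,\lambda})^{-1}\Rightarrow 0$. Your write-up is in fact more explicit than the paper's: it spells out why $\inf\gamma>0$, $\boldsymbol{\mathrm A}_{\Sigma^{00}}$ and $\mathfrak{r}>0$ guarantee that $\widetilde A$ is strictly increasing and divergent (the continuity point of the composition map), and it carries out the change-of-variable computation identifying \eqref{e:XY_1membrane-degen_ODE1}, both of which the paper leaves implicit.
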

 
\begin{exa}
In this example we illustrate how a suitable arrangement of specially designed membranes can transform
a \emph{free diffusion} into a diffusion with an almost arbitrary drift and diffusion coefficient.

Let $W$ be a standard $(d+1)$-dimensional Brownian motion, and
let $B=(B^i)_{0\leq i\leq d}$ be a sufficiently smooth bounded function on, $\bR^{d+1}$.
For simplicity, consider equidistant membranes located at
$a^\e_k=k\e$, $k\in\bZ$, so that Assumption \textbf{A}$_d^{\mathrm{Lip}_b}$
holds with $d(x)\equiv 1$.

Define the permeability and the sliding coefficients by $\beta:= B^0$ and $\theta^i:=B^i$, $i=1,\dots,d$. Setting
$\e=\delta$, the system with interfaces
\ba
\label{e:XYexa1}
X_t^{\e}&= x+ W^0_t +\sum_{k=-\infty}^\infty \e \int_0^t B^0(k\e,Y_s^{\e})\,\di L^{k\e}_s (X^{\e}),\\
Y^{i,\e}_t&= y^i+W^i_t
+ \sum_{k=-\infty}^\infty \e \int_0^t  B^i(k\e,Y_s^{\e})\,\di L^{k\e}_s (X^{\e}),\quad  i=1,\dots,d,\ t\in[0,\infty),
\ea
converges to the diffusion with drift
\ba
X_t&= x+ W^0_t +\int_0^t B^0(X_s,Y_s)\,\di s,\\
Y_t&= y^i+W^i_t
+ \int_0^t  B^i(X_s,Y_s)\,\di s,\quad  i=1,\dots,d,\ t\in[0,\infty).
\ea
As we see, the semipermeable sliding interfaces do not alter the diffusion coefficient.
This effect was firstly observed in
\cite{Aryasova+24}. A modification of the diffusion coefficient can be achieved by slowing down the diffusion;
however, the new diffusion coefficient will be the same for all coordinates.
Let $C=C(x,y)$
be a sufficiently smooth bounded function with $0<\inf_{x,y}C(x,y)\leq \sup_{x,y}C(x,y)\in(0,1]$.
Defining the permeability, sliding and
stickiness coefficients by
\ba
\gamma(x,y):=  \frac{1}{C^2(x,y)}- 1,
\quad \beta(x,y):= \frac{B^0(x,y)}{C(x,y)^2},\quad
\theta^i(x,y):= \frac{B^i(x,y)}{C^2(x,y)},\quad  i=1,\dots,d.
\ea
Setting $\delta=\lambda=\e$, the diffusion
\ba
\label{e:XYexa2}
X_t^{\e}&= x+ W^0_t +\sum_{k=-\infty}^\infty \e \int_0^t \frac{B^0(k\e,Y_s^{\e})}{C^2(k\e,Y_s^{\e})}\,\di L^{k\e}_s (X^{\e}),\\
Y^{i,\e}_t&= y^i+W^i_t
+ \sum_{k=-\infty}^\infty \e \int_0^t  \frac{B^i(k\e,Y_s^{\e})}{C^2(k\e,Y_s^{\e})}\,\di L^{k\e}_s (X^{\e}),\quad  i=1,\dots,d,\ t\in[0,\infty),
\ea
slowed down by the random time change
\ba
A_t^{\e}
:=t + \e \sum_{k=-\infty}^\infty \int_0^t \Big(\frac{1}{C^2(k\e,Y_s^{\e})}- 1\Big)\,\di L^{k\e}_s (X^{\e}),
\ea
weakly converges to the solution of the SDE
\ba
X_t&= x+ \int_0^t C(X_s,Y_s) \,\di W^0_s +\int_0^t B^0(X_s,Y_s)\,\di s,\\
Y_t&= y^i+  \int_0^t C(X_s,Y_s) \,\di W^i_s
+ \int_0^t  B^i(X_s,Y_s)\,\di s,\quad  i=1,\dots,d,\ t\in[0,\infty).
\ea
\end{exa}

Research on the homogenization of diffusions with interfaces is relatively sparse,
with most existing results restricted to the one-dimensional setting.

Le Gall \cite{LeGall83,legall1984one} studied strong existence and uniqueness for
one-dimensional SDEs involving local times of the unknown process.
Building on the approach of Harrison and Shepp \cite{HShepp-81} for skew Brownian motion,
he transformed such SDEs with local time terms into SDEs with discontinuous coefficients but without local times.
He established strong $L^1$ convergence of the resulting solutions, a result directly applicable to the
homogenization of one-dimensional diffusions with semipermeable interfaces.
A general treatment of these transformations is provided by Engelbert and Schmidt~\cite{engelbert1991strong}; see also Lejay~\cite{Lejay-06}.

Freidlin and Wentzell \cite{Freidlin_et1994} provided conditions for the weak convergence of one-dimensional
Markov processes. In dimension 1, their results recover ours; however, they worked with generalized generators
of Markov processes rather than with solutions of SDEs.

The technique developed by Le Gall \cite{LeGall83,legall1984one} was successfully applied by
Makhno~\cite{makhno2016one,makhno2017diffusion} and Krykun~\cite{krykun2017convergence} to study the
convergence of one-dimensional diffusions with many semipermeable interfaces. They demonstrated,
in particular, that under fairly general conditions on the coefficients,
the presence of membranes with various characteristics may lead to different limiting behaviors: the limit may contain no membranes but
acquire an additional drift term; the membranes may collapse to a single point; they may occupy an entire interval; and so forth.

The proof techniques used in the works mentioned above do not readily extend to the multidimensional case.

In dimension two, Weinryb~\cite{weinryb1984homogeneisation} studied the homogenization of diffusions
with periodic diffusion matrices and semipermeable interfaces located along equally spaced lines or circles,
with periodic penetration probabilities. Her analysis was carried out under the assumption of
existence and uniqueness of solutions to the corresponding martingale problem.

In a series of papers, Hairer and Manson~\cite{hairer2010periodic,hairer2010one,hairer2011multi} considered
multidimensional periodic diffusion homogenization with a single interface,
where the diffusion coefficients are periodic outside a finite-width
interface region. They showed that the scaling limit of such processes is a skew oscillating Brownian motion.

We also mention the works of Ouknine et al.~\cite{trutnau2015countably}
and Ramirez~\cite{ramirez2011multi}, who investigated diffusions with infinitely many interfaces in dimensions one and two.

In \cite{Aryasova+24}, the authors studied a homogenization problem for a system of stochastic differential equations
with local time terms, modeling multivariate diffusion in the presence of semipermeable hyperplane interfaces with oblique penetration. They showed that, as the distances between the interfaces tend to zero,
the singular local-time contributions vanish in the limit and produce an additional
deterministic drift generated by the interfaces, while the diffusion coefficient itself remains unchanged.

\medskip
\noindent
\textbf{Notation.} In this paper,
$|\cdot|$ denotes the Euclidean distance in $\bR^n$, $n\in\mathbb N$,
$\|f\|_\infty=\sup_x|f(x)|$ is the supremum norm of a real-, vector- or matrix-valued function $f$; $x^+=\max\{x,0\}$. 
Sometimes the constant $C\in(0,\infty)$ denotes a generic constant that does not depend on $\e,\delta$ etc;
its value may vary within the same chain of inequalities.

%

We also recall that the symmetric semimartingale local time $L^a(X)$ at $a\in\bR$ of a continuous real-valued semimartingale $X$ is the unique
non-decreasing process satisfying the Tanaka formula
\ba\label{eq:Tanaka_f}
|X_t-a|=|X_0-a|+\int_0^t \sgn (X_s-a)\,\di X_s+ L^a_t(X),
\ea
where
\ba
\sgn x=\begin{cases}
         x/|x|,&\quad x\neq 0,\\
         0,&\quad x=0,\\
        \end{cases}
\ea
see, e.g., Chapter VI in Revuz and Yor \cite{RevuzYor05}.
It is well known that the symmetric semimartingale local time $L^a(X)$ equals the limit
\ba
L^a_t(X)=\lim_{\rho\downarrow 0}\frac{1}{2\rho}\int_0^t \bI(|X_s-a|\leq \rho)\,\di \langle X\rangle_s\quad \text{a.s.}
\ea

\section{Dynamics around one membrane. Important results, estimates. }

Let $\delta\in(0,1]$, and assume that $(X^\delta,Y^\delta, W,(\rF^\delta_t))$ is a weak solution to SDE
\ba
\label{e:XY_1membrane}
X_t^{\delta}&=X_0^{\delta}  +\sum_{l=1}^m \int_0^t \sigma_{l}^0(s, X^{\delta  }_s,Y^{\delta  }_s)\,\di W^l_s
+ \int_0^t b^0(s,X^{\delta }_s,Y_s^{\delta  })\,\di s
+ \delta \int_0^t \beta(s,Y_s^{\delta })\,\di L^{0}_s (X^{\delta }),\\
Y^{i,\delta }_t&= Y^{i,\delta }_0
+\sum_{l=1}^m \int_0^t \sigma^{i}_l(s,X_s^{ \delta},Y_s^{\delta })\,\di W^l_s +\int_0^t b^i(s,X_s^{ \delta},Y_s^{\delta })\,\di s
+  \delta\int_0^t  \theta(s, Y_s^{\delta })\,\di L^{0}_s (X^{ \delta }),\\
&\quad i=1,\dots,d,
\ea
with a single membrane located at the hyperplane $x=0$.
At this stage, the solution may not be unique, and we do not impose any specific assumptions on the coefficients
$\sigma$, $b$, $\beta$ and $\theta$.
Let
$a_-,a_+\in(0,\infty)$. The goal of this Section consists in investigating the
asymptotics of the first exit times of $X^\delta$ from a narrow strip $[-a_-\e,a_+\e]$, $\e\in (0,1]$.
These estimates will later be applied in the analysis of \eqref{e:XY}.
Our argument is subdivided into four steps.

\noindent
\textbf{Step 1.}
For each $\e\in(0,1]$, introduce the filtration $\widetilde \rF_t^{\e,\delta}= \rF^\delta_{\e^2 t}$, the Brownian motion
$\widetilde W^{\e}_t:=\frac{ W_{\e^2 t} }{\e}$, the processes
\ba
&\widetilde X^{\e,\delta}_t:=\e^{-1} X^{\delta }_{\e^2 t},\quad
\widetilde Y^{\e,\delta}_t:=\e^{-1}  Y^{\delta }_{\e^2 t}.
\ea

\begin{lem}
\label{l:31}
The processes $\widetilde X^{\e,\delta}$, $\widetilde Y^{\e,\delta}$ satisfy the SDE
\begin{align}
\label{e:tilde_X}
\widetilde X^{\e,\delta}_t&= \widetilde X^{\e,\delta}_0
+\sum_{l=1}^m \int_0^t \sigma_{l}^0(\e^2 s, \e \widetilde X^{\e,\delta}_{  s}, \e\widetilde Y^{\e,\delta}_{s})\,\di \widetilde W^{l,\e}_s
+ \e\int_0^t b^0(\e^2 s, \e \widetilde X^{\e,\delta}_{  s}, \e \widetilde Y^{\e,\delta}_{s})\,\di s
+ \delta\int_0^t \beta({\e^2 s}, \e \widetilde Y_{s}^{\e,\delta})\,\di L^{0}_{s} (\widetilde X^{\e,\delta}),\\
\label{e:tilde_Y}
\widetilde Y^{i,\e,\delta}_t&= \widetilde Y^{i,\e,\delta}_0
+\sum_{l=1}^m \int_0^t \sigma_{l}^i(\e^2 s, \e \widetilde X^{\e,\delta}_{s}, \e\widetilde Y^{\e,\delta}_{s})\,\di \widetilde W^{l,\e}_s
+ \e\int_0^t b^i(\e^2 s, \e \widetilde X^{\e,\delta}_{  s}, \e \widetilde Y^{\e,\delta}_{s})\,\di s
+ \delta\int_0^t \theta({\e^2 s}, \e \widetilde Y_{s}^{\e,\delta})\,\di L^{0}_{s} (\widetilde X^{\e,\delta}),\\
\notag&\quad i=1,\dots,d,\\
\widetilde X^{\e,\delta}_0&=\e^{-1}X^{\delta}_0,\quad \widetilde Y^{\e,\delta}_0= \e^{-1}Y^{\delta}_0.
\end{align}

\end{lem}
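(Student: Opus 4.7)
The proof is essentially a Brownian-scaling computation: substitute the definitions of $\widetilde W^{\e}$, $\widetilde X^{\e,\delta}$, $\widetilde Y^{\e,\delta}$ into the original SDE \eqref{e:XY_1membrane} evaluated at time $\e^2 t$, perform the change of variable $s=\e^2 u$ in each integral, and divide throughout by $\e$. The plan is to carry this out term by term after first verifying two preliminary facts.

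First, I would check that $\widetilde W^{\e}$ is an $m$-dimensional standard Brownian motion with respect to $\widetilde \rF^{\e,\delta}_t=\rF^\delta_{\e^2 t}$. This is the usual scaling property: $\widetilde W^\e_t=W_{\e^2 t}/\e$ is a continuous martingale in the time-changed filtration with quadratic variation $t\,\mathrm{Id}$, hence a Brownian motion by Lévy's characterisation. Second, I would establish the scaling identity
\[
L^0_t(\widetilde X^{\e,\delta})=\e^{-1}L^0_{\e^2 t}(X^\delta).
\]
Applying the Tanaka formula \eqref{eq:Tanaka_f} to $X^\delta$ at the point $0$ and evaluating at time $\e^2 t$ yields
\[
|X^\delta_{\e^2 t}|=|X^\delta_0|+\int_0^{\e^2 t}\sgn(X^\delta_s)\,\di X^\delta_s+L^0_{\e^2 t}(X^\delta).
\]
Dividing by $\e$, using $\sgn(X^\delta_s)=\sgn(\widetilde X^{\e,\delta}_{s/\e^2})$ and the change of variable $s=\e^2 u$ in the Stieltjes/stochastic integral (which contributes a factor $\e$ from $\di X^\delta_{\e^2 u}=\e\,\di\widetilde X^{\e,\delta}_u$), gives a Tanaka-type decomposition for $|\widetilde X^{\e,\delta}_t|$ with last term $\e^{-1}L^0_{\e^2 t}(X^\delta)$, and by uniqueness of the Tanaka decomposition this must equal $L^0_t(\widetilde X^{\e,\delta})$.

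With these two ingredients the rest is mechanical. For the stochastic integral term, the change of variable $s=\e^2 u$ together with $\di W^l_{\e^2 u}=\e\,\di\widetilde W^{l,\e}_u$ produces an extra factor $\e$, which after division by $\e$ leaves exactly the term appearing in \eqref{e:tilde_X}. The Lebesgue drift term picks up a factor $\e^2$ from $\di s=\e^2\,\di u$, and after dividing by $\e$ yields the factor $\e$ in front of $b^0$. For the local-time term,
\[
\delta\int_0^{\e^2 t}\beta(s,Y^\delta_s)\,\di L^0_s(X^\delta)=\delta\int_0^t\beta(\e^2 u,\e\widetilde Y^{\e,\delta}_u)\,\di L^0_{\e^2 u}(X^\delta),
\]
and using the scaling identity $\di L^0_{\e^2 u}(X^\delta)=\e\,\di L^0_u(\widetilde X^{\e,\delta})$ yields an extra factor $\e$ which cancels the $\e$ from dividing the left-hand side, leaving $\delta\int_0^t\beta(\e^2 u,\e\widetilde Y^{\e,\delta}_u)\,\di L^0_u(\widetilde X^{\e,\delta})$, exactly as stated. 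The computation for $\widetilde Y^{i,\e,\delta}$ is identical with $\sigma^0_l,b^0,\beta$ replaced by $\sigma^i_l,b^i,\theta$, noting that the local time involved is still $L^0(\widetilde X^{\e,\delta})$.

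The only genuinely nontrivial step is the scaling rule for the symmetric semimartingale local time; everything else is a bookkeeping exercise with change of variables. I would therefore present the local-time scaling as a short standalone computation via Tanaka's formula, since the joint space–time rescaling combined with the symmetric (rather than one-sided) convention for $L^0$ can easily be mishandled if approached via the occupation-density formula alone.
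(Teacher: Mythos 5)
Your proposal is correct and follows the same overall scheme as the paper: substitute the definitions into \eqref{e:XY_1membrane} at time $\e^2 t$, change variables $s=\e^2 u$ in each integral, and reduce everything to the scaling identity $L^0_{\e^2 t}(X^\delta)=\e\,L^0_t(\widetilde X^{\e,\delta})$. The one place where you genuinely diverge is in how that identity is proved. The paper establishes it via the occupation-density representation $L^a_t(X)=\lim_{\rho\downarrow0}\frac{1}{2\rho}\int_0^t\bI(|X_s-a|\leq\rho)\,\di\langle X\rangle_s$ (stated for the symmetric convention at the end of Section~1), writing $\di\langle X^\delta\rangle_s=\Sigma^{00}\,\di s$ and rescaling both the time variable and the window width $\rho\mapsto\rho/\e$; this is the computation \eqref{e:localtime_tilde_X}. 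You instead apply Tanaka's formula \eqref{eq:Tanaka_f} to $X^\delta$ at time $\e^2 t$, divide by $\e$, use $\sgn(X^\delta_{\e^2u})=\sgn(\widetilde X^{\e,\delta}_u)$ and $\di X^\delta_{\e^2u}=\e\,\di\widetilde X^{\e,\delta}_u$, and identify $\e^{-1}L^0_{\e^2\cdot}(X^\delta)$ as the local time of $\widetilde X^{\e,\delta}$ by uniqueness of the nondecreasing term in the Tanaka decomposition --- which is legitimate precisely because the paper \emph{defines} the symmetric local time through \eqref{eq:Tanaka_f}. Both routes are sound; yours avoids computing the quadratic variation explicitly and makes the symmetric convention automatic, while the paper's occupation-density computation is equally safe here because the symmetric version of the formula is the one stated and used. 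Your closing caution about mishandling the symmetric convention via the occupation-density formula is therefore not a defect of the paper's argument, only a fair warning about the one-sided variant.
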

\begin{proof}
We have 
\ba
\label{e:tilde_X1}
\widetilde X^{\e,\delta}_t&
=\widetilde X^{\e,\delta}_0+\frac{1}{\e}\sum_{l=1}^m \int_0^{\e^2 t} \sigma_{l}^0(s, X_{s}^\delta,Y_{s}^\delta)\,\di W^{l}_s
+ \frac{1}{\e}\int_0^{\e^2 t} b^0({s},X_{s}^\delta,Y_{s}^\delta )\,\di s
+\frac{\delta}{\e} \int_0^{\e^2 t} \beta({s},Y_{s}^\delta )\,\di L^{0}_{s} (X^\delta)\\
&=\widetilde X^{\e,\delta}_0+\sum_{l=1}^m \int_0^t \sigma_{l}^0(\e^2s, X_{\e^2  s}^\delta,Y_{\e^2s}^\delta)\,\di \frac{W^{l}_{\e^2 s}}{\e}
+ \e\int_0^t b^0({\e^2s},X_{\e^2s}^\delta,Y_{\e^2s}^\delta )\,\di s
+\frac{\delta}{\e} \int_0^t \beta(\e^2s,Y_{\e^2s}^\delta )\,\di L^{0}_{\e^2s} (X^\delta )\\
&=\widetilde X^{\e,\delta}_0+\sum_{l=1}^m \int_0^t \sigma_{l}^0(\e^2 s, \e \widetilde X^{\e,\delta}_{  s}, \e\widetilde Y^{\e,\delta}_{s})
\,\di \widetilde W^{l,\e}_s
+ \e\int_0^t b^0(\e^2 s, \e \widetilde X^{\e,\delta}_{  s}, \e \widetilde Y^{\e,\delta}_{s})\,\di s
+ \frac{\delta}{\e}\int_0^t \beta({\e^2 s}, \e \widetilde Y_{s}^{\e,\delta})\,\di L^{0}_{\e^2s} (X^\delta ).
\ea
Further,
\ba
\label{e:localtime_tilde_X}
L^{0}_{\e^2t} (X^\delta )
&=\lim_{\rho\downarrow 0}\frac{1}{2\rho}\int_0^{\e^2t}\bI(|X_s^\delta|\leq \rho)\,\di \langle X^\delta\rangle_s\\
&=\e^2\lim_{\rho\downarrow 0}\frac{1}{2\rho}\int_0^t\bI(|X_{\e^2 s}^\delta|\leq \rho)
\Sigma^{00}(\e^2 s, X^\delta_{\e^2  s},Y^\delta_{\e^2 s})\,\di s\\
&=\e\lim_{\rho\downarrow 0}\frac{\e}{2\rho}
\int_0^t \bI\Big(\Big|\frac{X^\delta_{\e^2 s}}{\e}\Big|\leq \frac{\rho}{\e}\Big)
\Sigma^{00}(\e^2 s, X^{ \delta}_{\e^2  s},Y^{ \delta}_{\e^2 s})\,\di s\\
&=\e L_t^0(\widetilde X^{\e,\delta}).
\ea
Substituting \eqref{e:localtime_tilde_X} into \eqref{e:tilde_X1} we obtain \eqref{e:tilde_X}.
The second equation is obtained similarly.
\end{proof}

\noindent
\textbf{Step 2.} For $a_-,a_+\in(0,\infty)$,
introduce
the stopping times
\ba
\tau^{\e,\delta} & =\inf\{t\in[ 0,\infty)\colon X_t^\delta\notin (-\e a_{- }, \e a_+)\}, \\
\widetilde \tau^{\e,\delta} & =\inf\{t\in[0,\infty)\colon \widetilde X^{\e,\delta}_t\notin ( -a_{- },  a_+)\}.
\ea

\begin{lem}
\label{l:32}
The following relations hold:
\ba
1.\quad &\widetilde \tau^{\e,\delta}=\frac{1}{\e^2}\tau^{\e,\delta},\\
2.\quad &\widetilde X^{\e,\delta}(\widetilde \tau^{\e,\delta})=\frac{1}{\e}X^\delta(\tau^{\e,\delta}),\\
3.\quad &
\P (\widetilde X^{\e,\delta} (\widetilde \tau^{\e,\delta})=\pm a_{\pm})
=\P (X^\delta(\tau^{\e,\delta})=\pm\e a_{\pm }),\\
4.\quad &L^0_{\widetilde \tau^{\e,\delta}}(\widetilde X^{\e,\delta})=\frac{1}{\e}  L^0_{\tau^{\e,\delta}}(  X^\delta).
\ea
\end{lem}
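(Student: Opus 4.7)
The plan is to observe that every assertion in Lemma~\ref{l:32} follows from the deterministic scaling $\widetilde X^{\e,\delta}_t = \e^{-1} X^\delta_{\e^2 t}$, combined with the local-time scaling already computed inside the proof of Lemma~\ref{l:31}. No additional stochastic input is needed, so this is essentially a careful bookkeeping of change of variables.

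First I would prove (1) by a direct rewriting of the exit event: by definition of $\widetilde X^{\e,\delta}$,
\ba
\widetilde X^{\e,\delta}_t \notin (-a_-, a_+) \iff X^\delta_{\e^2 t} \notin (-\e a_-, \e a_+),
\ea
so the map $t \mapsto \e^2 t$ is a continuous, strictly increasing bijection between the sets
$\{t \geq 0 : \widetilde X^{\e,\delta}_t \notin (-a_-, a_+)\}$ and
$\{s \geq 0 : X^\delta_s \notin (-\e a_-, \e a_+)\}$.
Taking infima gives $\widetilde \tau^{\e,\delta} = \e^{-2} \tau^{\e,\delta}$. Assertion~(2) is then immediate, since
$\widetilde X^{\e,\delta}(\widetilde \tau^{\e,\delta}) = \e^{-1} X^\delta(\e^2 \widetilde \tau^{\e,\delta}) = \e^{-1} X^\delta(\tau^{\e,\delta})$,
and (3) follows from (2) by intersecting both sides with the event $\{\pm a_\pm\}$ (or equivalently $\{\pm \e a_\pm\}$ for the unscaled process).

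For (4), I would simply invoke the identity
\ba
L^0_{\e^2 s}(X^\delta) = \e\, L^0_s(\widetilde X^{\e,\delta}),\qquad s \geq 0,
\ea
already derived in equation \eqref{e:localtime_tilde_X} as part of Lemma~\ref{l:31}. Evaluating this at $s = \widetilde \tau^{\e,\delta}$ and using (1) yields
\ba
L^0_{\tau^{\e,\delta}}(X^\delta) = L^0_{\e^2 \widetilde \tau^{\e,\delta}}(X^\delta) = \e\, L^0_{\widetilde \tau^{\e,\delta}}(\widetilde X^{\e,\delta}),
\ea
which rearranges to the claim. There is no real obstacle here beyond making sure the local-time scaling from Step~1 is cited in the form needed at the (random) time $\widetilde \tau^{\e,\delta}$, which is legitimate because the identity \eqref{e:localtime_tilde_X} holds pathwise for all $s$, not merely for deterministic times.
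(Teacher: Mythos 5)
Your proposal is correct and follows essentially the same route as the paper: assertion (1) by rewriting the exit set under the time change $t\mapsto\e^2 t$ and taking infima, (2) and (3) as immediate consequences, and (4) by evaluating the pathwise local-time scaling identity \eqref{e:localtime_tilde_X} at the random time $\widetilde\tau^{\e,\delta}$. Nothing is missing.
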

\begin{proof}
To show 1.\ we note that
\ba
\widetilde \tau^{\e,\delta}
&=\inf\{t\geq 0\colon \widetilde X^{\e,\delta}_t\notin (-a_{- },  a_+)\}
 =\inf\Big\{t\geq 0\colon  \frac{1}{\e}X_{\e^2 t}^\delta\notin (-a_{- },  a_+)\Big\}\\
 &=\inf\Big\{\frac{s}{\e^2}\geq 0\colon  X_{s}^\delta\notin ( -\e a_{- },  \e a_+)\Big\}
 =\frac{1}{\e^2}\inf\Big\{{s}\geq 0\colon  X_{s}^\delta\notin (-\e a_{- },  \e a_+)\Big\}=\frac{1}{\e^2}\tau^{\e,\delta}.
\ea
From this formula we get 2.:
\ba
\widetilde X^{\e,\delta}(\widetilde \tau^{\e,\delta})
=\frac{1}{\e}X^\delta(\e^2 \widetilde \tau^{\e,\delta})=\frac{1}{\e}X^\delta(\tau^{\e,\delta}),
\ea
and 3.\ follows immediately. Eventually, relation 4.\ follows from
the definition of a local time, see \eqref{e:localtime_tilde_X}.
\end{proof}

\noindent
\textbf{Step 3.} We give rough and general estimates on the exit times $\tau^{\e,\delta}$ and
$\widetilde\tau^{\e,\delta}$
from the strip, local times, and the deviation from the
initial point.

\begin{lem}
\label{cor:estimate_hittings}
 Assume that there is $C\in[1,\infty)$
 such that
\ba
\label{e:aC}
 a_\pm\in[C^{-1}, C]
\ea
and for all $t\in[0,\infty)$, $(x,y)\in [-a_-,a_+]\times \bR^d$
\ba
\label{eq:ass_bounds_coeff_SDE}
\sum_{l=1}^m|\sigma_l^0(t,x,y)|+ |b^0(t,x,y)|\leq C, \quad
\Sigma^{00}(t,x,y)\geq C^{-1}.
\ea
Then, there are constants $A_k\in(0,\infty)$,  $k\in\mathbb N$, and a constant $B\in(0,\infty)$
that depend only on $C$, such that for any $\e,\delta\in(0,1]$
\begin{align}
\label{e:X1}
&
\E \Big[|\widetilde X^{\e,\delta}_{\widetilde\tau^{\e,\delta}}-\widetilde X^{\e,\delta}_0|^k
\Big|\widetilde  \rF_{0}^{\e,\delta}\Big]\leq A_k,
&&
\E \Big[|  X^{\delta}( \tau^{\e,\delta})-X^\delta_0|^k  \Big|\rF^{\delta}_{0}\Big]\leq A_k\e^k,
\quad \text{a.s.}\\
\label{e:Y1}
&
\E \Big[\sup_{s\in[0,\widetilde \tau^{\e,\delta}]}|\widetilde Y^{\e,\delta}_s-\widetilde Y^{\e,\delta}_0|^k
\Big|\widetilde \rF_{0}^{\e,\delta}\Big]
\leq A_k,
&&
\E \Big[\sup_{s\in [0, \tau^{\e,\delta}]}| Y^\delta_s-Y_0^\delta|^k  \Big|\rF_{0}^\delta\Big]\leq A_k\e^k,
\quad \text{a.s.}\\
\label{e:L1}
&
\E \Big[   ( L^0_{\widetilde\tau^{\e,\delta} }(\widetilde X^{\e,\delta}))^k \Big|\widetilde\rF_{0}^{\e,\delta}\Big]\leq A_k,
&&
\E \Big[  (L_{\tau^{\e,\delta}}^0)^k  \Big|\rF_{0}^\delta\Big]\leq A_k\e^k, \quad \text{a.s.}\\
\label{e:tau1}
&
\E \Big[ ( \widetilde\tau^{\e,\delta})^k  \Big|\widetilde \rF_0^{\e,\delta}\Big]\leq A_k,
&&
\E \Big[  (\tau^{\e,\delta})^k  \Big|\rF_0^\delta\Big]\leq A_k\e^{2k},\quad \text{a.s.}\\
\intertext{and if $X^\delta_0=0$}
&\label{e:tau2}
\E \Big[  \widetilde\tau^{\e,\delta}  \Big|\widetilde \rF_0^{\e,\delta}\Big]\geq B ,
&&
\E \Big[  \tau^{\e,\delta}  \Big|\rF_0^\delta\Big]\geq B \e^2,
\quad \text{a.s.}
\end{align}
\end{lem}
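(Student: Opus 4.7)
The plan is to reduce everything to the rescaled process $\widetilde X^{\e,\delta}$ via Lemma~\ref{l:32}, which gives $\tau^{\e,\delta}=\e^2\widetilde\tau^{\e,\delta}$, $X^\delta(\tau^{\e,\delta})=\e\widetilde X^{\e,\delta}(\widetilde\tau^{\e,\delta})$, $L^0_{\tau^{\e,\delta}}(X^\delta)=\e L^0_{\widetilde\tau^{\e,\delta}}(\widetilde X^{\e,\delta})$, and (by Brownian scaling of the $Y$-component) an analogous identity for $Y^\delta-Y^\delta_0$. Consequently each right-column bound is immediate from its left-column counterpart, and it suffices to establish the left-column estimates with constants depending only on $C$. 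The bound \eqref{e:X1} is then trivial: $\widetilde X^{\e,\delta}_t$ stays in $[-a_-,a_+]\subseteq[-C,C]$ up to $\widetilde\tau^{\e,\delta}$, so $|\widetilde X^{\e,\delta}_{\widetilde\tau}-\widetilde X^{\e,\delta}_0|\leq 2C$ pointwise.

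For the first-moment exit-time bound in \eqref{e:tau1}, I would apply It\^o's formula to $f(x)=x^2$ along $\widetilde X^{\e,\delta}$ stopped at $\widetilde\tau^{\e,\delta}\wedge n$. The crucial observation is that $\widetilde X_s\,\di L^0_s(\widetilde X)=0$ because the symmetric local time is supported on $\{\widetilde X=0\}$, so the singular local-time term drops out. Taking conditional expectations and using $\Sigma^{00}\geq C^{-1}$, $|\widetilde X|\leq C$, $|b^0|\leq C$ leads to an inequality from which $\E[\widetilde\tau^{\e,\delta}\wedge n\mid\widetilde\rF_0^{\e,\delta}]\leq A_1(C)$ after letting $n\to\infty$. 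Higher moments then follow by the strong Markov property: the first-moment bound combined with Markov's inequality gives $\P(\widetilde\tau>T\mid\widetilde\rF_0)\leq 1/2$ for some $T$ depending only on $C$, and iteration yields a geometric tail and hence $\E[(\widetilde\tau^{\e,\delta})^k\mid\widetilde\rF_0]\leq A_k(C)$ for every $k$.

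The local-time estimate \eqref{e:L1} will come from the Tanaka formula \eqref{eq:Tanaka_f} applied to $\widetilde X^{\e,\delta}$: the convention $\sgn(0)=0$ makes the local-time term inside $\di\widetilde X$ drop out of $\int\sgn(\widetilde X)\,\di\widetilde X$, giving $L^0_t(\widetilde X)\leq 2C+\bigl|\int_0^t\sgn(\widetilde X_s)\,\di M_s\bigr|+\e C\,t$, where $M$ denotes the diffusion martingale part of $\widetilde X^{\e,\delta}$ (with $\di\langle M\rangle_s=\Sigma^{00}\,\di s$). Setting $t=\widetilde\tau^{\e,\delta}$, taking $k$-th powers and expectations, and applying Burkholder--Davis--Gundy together with \eqref{e:tau1} closes the bound. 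The estimate \eqref{e:Y1} is analogous: BDG on the martingale part of $\widetilde Y^{\e,\delta}$, combined with the boundedness of the drift and sliding coefficients and with \eqref{e:tau1} and \eqref{e:L1}, controls $\E[\sup_{s\leq\widetilde\tau}|\widetilde Y_s-\widetilde Y_0|^k]$. For the lower bound \eqref{e:tau2}, I would use the same It\^o identity for $x^2$ in reverse: starting from $\widetilde X_0=0$, the exit position satisfies $\widetilde X_{\widetilde\tau}^2\geq(\min(a_-,a_+))^2\geq C^{-2}$, and bounding the right-hand side above by a multiple of $\E[\widetilde\tau]$ yields $\E[\widetilde\tau\mid\widetilde\rF_0]\geq B(C)>0$.

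The main technical obstacle is in making the upper bound \eqref{e:tau1} genuinely uniform in $\e\in(0,1]$. The naive It\^o calculation for $x^2$ produces an extra term $2\e C^2\,\E[\widetilde\tau]$ on the right-hand side, which can be absorbed into the left only when $C^{-1}>2\e C^2$; for the remaining range of $\e$ a more refined test function is required. The natural candidate is a piecewise-smooth Dynkin function $h$ on $[-a_-,a_+]$ satisfying $h'(0)=0$, $h(-a_-)=h(a_+)=0$, and $\tfrac12\Sigma^{00}h''+\e b^0 h'\leq -1$, matched smoothly across the membrane; constructing such an $h$ with constants depending only on $C$ uniformly in $\e\in(0,1]$ is the delicate step, and the $k$-th moment bounds then come from iterating the first-moment estimate as explained above.
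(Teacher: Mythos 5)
Your overall architecture matches the paper's: reduce the right column to the left via Lemma \ref{l:32} and the scaling identity for the local time; get \eqref{e:X1} for free from $|\widetilde X^{\e,\delta}|\le a_-\vee a_+\le C$ before exit; obtain \eqref{e:L1} from Tanaka's formula with $\sgn 0=0$ killing the local-time contribution, plus BDG and \eqref{e:tau1}; get \eqref{e:Y1} the same way; and get \eqref{e:tau2} from It\^o for $x^2$ (the paper bounds $\E(\widetilde X^{\e,\delta}_{\widetilde\tau^{\e,\delta}})^2\le(\|\Sigma^{00}\|_\infty+2\e C\|b\|_\infty)\,\E\widetilde\tau^{\e,\delta}$ with $\|\Sigma^{00}\|_\infty\le C^2$).

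Two points keep the proposal from being complete. First, you correctly diagnose that the naive $x^2$ computation does not give \eqref{e:tau1} uniformly in $\e$, and you correctly name the remedy — a test function $h$ with $h'(0)=0$ (so the local-time term drops) and $\tfrac12\Sigma^{00}h''+\e b^0h'\le-1$ — but you then declare its construction ``the delicate step'' and stop. That construction is the actual content of the lemma. The paper's choice is $h(t,x)=\ex^{\rho t}(A-\cosh(Kx))$: one has $\partial_xh(t,0)=0$, and $\e b^0\partial_xh+\tfrac12\Sigma^{00}\partial_{xx}h\le \ex^{\rho t}\bigl(KC-\tfrac{K^2}{2C}\bigr)\cosh(Kx)\le-2\ex^{\rho t}$ for $K=K(C)$ large, uniformly in $\e\in(0,1]$, because the favourable second-order term carries no factor of $\e$ while the drift term does (and is bounded by $KC\cosh(Kx)$ regardless). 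Choosing $A$ so that $h\ge0$ at the exit position and then $\rho$ small yields the exponential moment $\E\,\ex^{\rho\widetilde\tau^{\e,\delta}}\le A\rho+1$, from which every $k$-th moment in \eqref{e:tau1} follows at once. Second, your route to higher moments of $\widetilde\tau^{\e,\delta}$ invokes the strong Markov property, which is not available here: $(X^{\e,\delta},Y^{\e,\delta})$ is only a possibly non-unique weak solution, and the paper establishes the strong Markov property only under the additional hypotheses of Theorem \ref{t:existU}, part 2. The geometric-tail iteration can be repaired by rerunning the Dynkin argument on $[t,\widetilde\tau^{\e,\delta}]$ conditionally on $\widetilde\rF^{\e,\delta}_t$ for deterministic $t$, which gives $\P(\widetilde\tau^{\e,\delta}>t+T\mid\widetilde\rF^{\e,\delta}_t)\le\tfrac12$ on $\{\widetilde\tau^{\e,\delta}>t\}$ without any Markov assumption; but as written the step is not justified, and the paper's exponential-moment bound sidesteps it entirely.
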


%
%
%
%
%
\begin{proof}
We prove the formulas for the processes $\widetilde X^{\e,\delta}$ and  $\widetilde Y^{\e,\delta}$; their
counterparts for $ X^{\delta}$ and  $Y^{\delta}$ follow from Lemma \ref{l:32}. For simplicity, we omit conditional expectations.

The inequality \eqref{e:X1} is obvious for any $A_k\geq C^k\geq  (a_-\vee a_+)^k$.

For some $\rho,A,K\in(0,\infty)$ to be chosen later
let us consider the Lyapunov function
\ba
h(t,x)=\ex^{\rho t}\big(A-\cosh(Kx)\big).
\ea
Then $h(0,x)\leq A$ and
\ba
\partial_t h(t,x)&= \rho\ex^{\rho t}\big(A-\cosh(Kx)\big),\\
\partial_x h(t,x)&=-K\ex^{\rho t}\sinh(Kx), \quad  \partial_x h(t,0)=0, \\
\partial_{xx} h(t,x)&=-K^2\ex^{\rho t}\cosh(Kx) .
\ea    
The application of the It\^o formula to the stopped process $\widetilde X^{\e,\delta}_{t\wedge N}$,
$N\in\mathbb N$, starting at $\frac{x}{\e}$ yields
\ba
\E h(\widetilde \tau^{\e,\delta}\wedge N,\widetilde X^{\e,\delta}_{\widetilde \tau^{\e,\delta}\wedge N})
&=h\Big(0,\frac{x}{\e}\Big)
+\E\int_0^{\widetilde \tau^{\e,\delta}\wedge N} \rho
\ex^{\rho s} \Big( A-\cosh(K\widetilde X^{\e,\delta}_s)\Big)\,\di s\\
&-\E\int_0^{\widetilde \tau^{\e,\delta}\wedge N}K b^0(\e^2 s, \e \widetilde X^{\e,\delta}_{  s}, \e\widetilde Y^{\e,\delta}_{s})
\ex^{\rho s}\sinh(K\widetilde X^{\e,\delta}_s)\,\di s\\
&-\frac12 \E\int_0^{\widetilde \tau^{\e,\delta}\wedge N}
\Sigma^{00}(\e^2 s, \e \widetilde X^{\e,\delta}_{  s}, \e\widetilde Y^{\e,\delta}_{s})
K^2\ex^{\rho s} \cosh(K\widetilde X^{\e,\delta}_s)  \,\di s.
\ea
Since $\cosh x\geq 1$ and $|\sinh x|\leq \cosh x$, it follows from \eqref{eq:ass_bounds_coeff_SDE}   that
\ba
\rho
\Big( A-\cosh(K\widetilde X^{\e,\delta}_s)\Big)
&-K b^0(\e^2 s, \e \widetilde X^{\e,\delta}_{  s}, \e\widetilde Y^{\e,\delta}_{s})
\sinh(K\widetilde X^{\e,\delta}_s)
-\frac12 \Sigma^{00}(\e^2 s, \e \widetilde X^{\e,\delta}_{  s}, \e\widetilde Y^{\e,\delta}_{s})
K^2\cosh(K\widetilde X^{\e,\delta}_s)\\
&\leq \rho A+\Big(K\|b\| -\frac{K^2}2 \Sigma^{00}(\e^2 s, \e \widetilde X^{\e,\delta}_{  s}, \e\widetilde Y^{\e,\delta}_{s})\Big) \cosh(K\widetilde X^{\e,\delta}_s)\\
&\leq
  \rho A+K\Big(C -\frac{K}{2C}\Big) \cosh(KC).
\ea
First, choose $K=K(C)$ large enough such that
\ba
K\Big(C -\frac{K}{2C}\Big) \cosh(KC)\leq -2.
\ea
Then, choose $A=A(C,K)\geq 1$ large enough such that $A - \cosh(KC)\geq 0$, so that
$h(\widetilde \tau^{\e,\delta}\wedge N,\widetilde X^{\e,\delta}_{\widetilde \tau^{\e,\delta}\wedge N})\geq 0$.
Finally, choose $\rho=\rho(C,K,A)>0$ small enough such that
\ba
\rho A+K\Big(C -\frac{K}{2C}\Big) \cosh(KC)\leq -1.
\ea
Then 
\ba
0&\leq \E h(\widetilde \tau^{\e,\delta}\wedge N,\widetilde X^{\e,\delta}_{\widetilde \tau^{\e,\delta}\wedge N})
\leq h(0,\frac{x}{\e})- \E \int_0^{\widetilde\tau^{\e,\delta}\wedge N}\ex^{\rho s}\,\di s\\
&\leq A-\frac {1}{\rho} \big(\E\ex^{\rho(\widetilde\tau^{\e,\delta}\wedge N)}-1\big),\\
\ea
Therefore,
\ba
\E \ex^{\rho(\widetilde\tau^{\e,\delta}\wedge N)}&\leq  A\rho +1, \ N\in\mathbb N.
\ea
Passing to the limit as $N\to\infty$, we obtain
\ba
\label{e:Moment Z}
\E \ex^{\rho \widetilde\tau^{\e,\delta}}\leq A\rho +1
\ea
and therefore
\ba
\E (\widetilde\tau^{\e,\delta})^k \leq \frac{k!}{\rho^k} \E \ex^{\rho \widetilde\tau^{\e,\delta}}
\leq\frac{k!}{\rho^k} (A\rho +1)
\ea
and \eqref{e:tau1} follows.

Since $\sgn 0=0$ we get
\ba
\label{e:0}
\delta\int_0^t \sgn (\widetilde X^{\e,\delta}_s) \beta({\e^2 s}, \e \widetilde Y_{s}^{\e,\delta})\,\di L^{0}_{s} (\widetilde X^{\e,\delta})
=0.
\ea
Applying Tanaka's formula, \eqref{e:tilde_X} and \eqref{e:0}
we obtain
\ba
L^0_t(\widetilde X^{\e,\delta})
&=
|\widetilde X^{\e,\delta}_t|-|\widetilde X^{\e,\delta}_0|-\int_0^t \sgn (\widetilde X^{\e,\delta}_s)\,\di \widetilde X^{\e,\delta}_s\\
&=
|\widetilde X^{\e,\delta}_t|-|\widetilde X^{\e,\delta}_0|
-\sum_{l=1}^m \int_0^t \sgn (\widetilde X^{\e,\delta}_s)
\sigma_{l}^0(\e^2 s, \e \widetilde X^{\e,\delta}_{  s}, \e\widetilde Y^{\e,\delta}_{s})\,\di \widetilde W^{l,\e}_s\\
&- \e\int_0^t \sgn (\widetilde X^{\e,\delta}_s) b^0(\e^2 s, \e \widetilde X^{\e,\delta}_{  s}, \e \widetilde Y^{\e,\delta}_{s})\,\di s.
\ea
Then by the BDG inequality, \eqref{e:X1}, \eqref{e:tau1} and the boundedness of all the coefficients we get \eqref{e:L1}.

The estimate \eqref{e:Y1} follows analogously with the help of the representation
\eqref{e:tilde_Y}, the boundnedness of all the coefficients, and  \eqref{e:tau1} and \eqref{e:L1}.

By the It\^o formula, from \eqref{e:tilde_X} and $x=0$ we get
\ba
(a_-\wedge a_+)^2&\leq
\E (\widetilde X^{\e,\delta}_{ \widetilde \tau^{\e,\delta} }  )^2
=  \E \int_0^{\widetilde \tau^{\e,\delta} }
\Sigma^{00}(\e^2 s, \e \widetilde X^{\e,\delta}_{  s}, \e\widetilde Y^{\e,\delta}_{s})\,\di s
+ 2\e\E \int_0^{\widetilde \tau^{\e,\delta} }
\widetilde X^{\e,\delta}_s b^0(\e^2 s, \e \widetilde X^{\e,\delta}_{  s}, \e \widetilde Y^{\e,\delta}_{s})\,\di s\\
&\leq (\|\Sigma^{00}\| + 2\e (a_-\vee a_+) \|b\| ) \E  \widetilde \tau^{\e,\delta},
\ea
so that \eqref{e:tau2} holds true.
\end{proof}

%
%

\noindent
\textbf{Step 4.} Asymptotic expansion of probabilistic characteristics of $ X^{\e,\delta}$.

\begin{lem}\label{lem:accu_estimates}
Assume that \eqref{e:aC} and \eqref{eq:ass_bounds_coeff_SDE}
hold,
and $\widetilde X_0^{\e,\delta}=0$ and $\widetilde Y_0^{\e,\delta}=\frac{y}{\e}$. Then,
\begin{align}
\label{e:estP}
\P\Big(\widetilde X_{ \tau^{\e,\delta}}^{\e,\delta}=a_\pm\Big|\widetilde \rF^{\e,\delta}_0\Big)&=\frac{   a_\mp}{a_-+a_+}
+\mathcal O(\e)+\mathcal O(\delta),  \quad \e, \delta\to 0,\\
\label{e:estL}
\E \Big[ L^0_{\widetilde \tau^{\e,\delta}}(\widetilde X^{\e,\delta})\Big|\widetilde \rF^{\e,\delta}_0\Big]
&  =  \frac{ 2 a_-a_+}{a_-+a_+} +\mathcal O(\e)+ \mathcal O(\delta), \quad \e, \delta\to 0,
\end{align}
and if $(t,x,y)\mapsto \Sigma^{00}(t,x,y)$, $b^0(t,x,y)$, $\beta(t,y)$ are globally Lipschitz continuous
\begin{align}
\label{e:esttau}
\E \Big[\widetilde\tau^{\e,\delta}\Big|\widetilde \rF^{\e,\delta}_0\Big]
&= \frac{a_-a_+}{\Sigma^{00}(0,0,y)} +\mathcal O(\e)+ \mathcal O(\delta), \quad \e, \delta\to0,\\
\E \Big[\widetilde X_{\widetilde \tau^{\e,\delta}}^{\e,\delta}\Big|\widetilde \rF^{\e,\delta}_0\Big]
& =   \frac{b^0(0,0,y)}{\Sigma^{00}(0,0,y)}a_-a_+  \e +\beta(0,y)\frac{2 a_-a_+}{a_-+a_+}\delta
 +\mathcal O(\e^2)+\mathcal O(\e\delta), \quad \e, \delta\to0.
\end{align}
where all error terms $\mathcal O=\mathcal O_{\omega,\e,\delta}$ are such that
\ba
\operatorname*{ess\,sup}_{\omega}\sup_{\e,\delta}
\Big( \frac{|\mathcal O(\e)|}{\e}
+ \frac{|\mathcal O(\e^2)|}{\e^2}
+ \frac{|\mathcal O(\delta)|}{\delta}
+ \frac{|\mathcal O(\e\delta)|}{\e\delta} \Big)
\ea
is bounded by a non-random constant that depends only on $C$ from \eqref{e:aC} and \eqref{eq:ass_bounds_coeff_SDE}
and on the Lipschitz constant $C_\mathrm{Lip}$ of all functions.
\end{lem}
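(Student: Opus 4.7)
The approach is to apply It\^o's formula (and Tanaka's formula in one case) to the rescaled process $\widetilde X^{\e,\delta}$, started at $0$ and stopped at $\widetilde\tau^{\e,\delta}$, for four carefully chosen test functions, and to read off both the leading-order term and the error. The guiding principle is that as $\e,\delta\to0$ the coefficients $\Sigma^{00}(\e^2 s,\e\widetilde X_s,\e\widetilde Y_s)$, $b^0(\dots)$, $\beta(\dots)$ freeze to $\sigma_0^2:=\Sigma^{00}(0,0,y)$, $b_0:=b^0(0,0,y)$, $\beta_0:=\beta(0,y)$, so that $\widetilde X^{\e,\delta}$ is a perturbation of a Brownian motion with variance $\sigma_0^2$ exiting $(-a_-,a_+)$, whose classical exit characteristics coincide with the four claimed leading terms. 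Errors are controlled throughout by combining the Lipschitz hypothesis with the a priori conditional-moment bounds of Lemma \ref{cor:estimate_hittings} --- notably $\E\widetilde\tau^{\e,\delta}\le A_1$, $\E L^0_{\widetilde\tau^{\e,\delta}}\le A_1$, $\E\sup_{s\le\widetilde\tau^{\e,\delta}}|Y^\delta_{\e^2 s}-y|^k\le A_k\e^k$ and their $k=2$ analogues.

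The four test functions are: (i) the linear $u(x)=(a_-+x)/(a_-+a_+)$ for \eqref{e:estP}, where $u''=0$ leaves only the drift correction of order $\e\,u'(0)\E\int b^0\,ds=O(\e)$ and the local-time correction of order $\delta\,u'(0)\E\int\beta\,dL^0_s=O(\delta)$; (ii) the absolute value $u(x)=|x|$ handled via Tanaka for \eqref{e:estL}, where the local-time integrand $\sgn(\widetilde X_s)\beta(\dots)$ vanishes on $\{\widetilde X_s=0\}$ exactly as in \eqref{e:0}, yielding $\E L^0_{\widetilde\tau^{\e,\delta}}=\E|\widetilde X^{\e,\delta}_{\widetilde\tau^{\e,\delta}}|+O(\e)=\tfrac{2a_-a_+}{a_-+a_+}+O(\e)+O(\delta)$ after substituting \eqref{e:estP}; (iii) the quadratic $u(x)=(a_+-x)(a_-+x)/\sigma_0^2$, vanishing at both endpoints, for \eqref{e:esttau}, in which the Lipschitz bound $|\Sigma^{00}(\e^2 s,\e\widetilde X_s,\e\widetilde Y_s)-\sigma_0^2|\le C_{\mathrm{Lip}}(\e^2 s+\e|\widetilde X_s|+|Y^\delta_{\e^2 s}-y|)$ reduces the principal It\^o integral to $-\E\widetilde\tau^{\e,\delta}+O(\e)$, while the drift and local-time contributions add $O(\e)+O(\delta)$; and (iv) the identity $u(x)=x$ for the last identity, yielding
\begin{equation*}
\E\widetilde X^{\e,\delta}_{\widetilde\tau^{\e,\delta}}=\e\,\E\int_0^{\widetilde\tau^{\e,\delta}}b^0(\dots)\,ds+\delta\,\E\int_0^{\widetilde\tau^{\e,\delta}}\beta(\dots)\,dL^0_s,
\end{equation*}
whose two leading contributions $\e b_0\E\widetilde\tau^{\e,\delta}$ and $\delta\beta_0\E L^0_{\widetilde\tau^{\e,\delta}}$ are then evaluated by substituting the already-proven \eqref{e:esttau} and \eqref{e:estL}.

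The main technical difficulty is controlling the remainders in this last identity at the level $\mathcal O(\e^2)+\mathcal O(\e\delta)$; a naive pointwise bound would give only $\mathcal O(\e)+\mathcal O(\delta)$, which would swamp the principal terms. The crucial step is to expand each integrand to first order using Lipschitz continuity: $|b^0(\e^2 s,\e\widetilde X_s,\e\widetilde Y_s)-b_0|$ and $|\beta(\e^2 s,\e\widetilde Y_s)-\beta_0|$ are both dominated by $C_{\mathrm{Lip}}(\e^2 s+\e|\widetilde X_s|+|Y^\delta_{\e^2 s}-y|)$. In the drift integral this small factor multiplies the prefactor $\e$ to yield $O(\e^2)$ after using $\E\widetilde\tau^{\e,\delta}=O(1)$ and $\E\sup|Y^\delta-y|=O(\e)$. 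In the local-time integral a pointwise bound is unavailable because $dL^0$ concentrates on the random set $\{\widetilde X_s=0\}$, so I would apply Cauchy--Schwarz:
\begin{equation*}
\delta\,\E\int_0^{\widetilde\tau^{\e,\delta}}|Y^\delta_{\e^2 s}-y|\,dL^0_s\le\delta\bigl(\E\sup_{s\le\widetilde\tau^{\e,\delta}}|Y^\delta_{\e^2 s}-y|^2\bigr)^{1/2}\bigl(\E(L^0_{\widetilde\tau^{\e,\delta}})^2\bigr)^{1/2}=O(\e\delta),
\end{equation*}
and the $\e^2 s$ contribution is $O(\e^2\delta)$ analogously via $\E(\widetilde\tau^{\e,\delta})^2=O(1)$, thanks to the second-moment bounds of Lemma \ref{cor:estimate_hittings}.
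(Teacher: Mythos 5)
Your proposal is correct and follows essentially the same route as the paper: expectations of $\widetilde X_{\widetilde\tau}$, $|\widetilde X_{\widetilde\tau}|$ (via Tanaka and $\sgn 0=0$), a quadratic test function, and the identity $\E\widetilde X_{\widetilde\tau}=\e\E\int b^0\,\di s+\delta\E\int\beta\,\di L^0$, with coefficients frozen at $(0,0,y)$ and remainders controlled by Lemma \ref{cor:estimate_hittings}. The only (harmless) deviations are that you use the quadratic $(a_+-x)(a_-+x)$ vanishing at the endpoints where the paper uses $x^2$ together with \eqref{e:estP}, and that you spell out the Cauchy--Schwarz step for the local-time remainder that the paper compresses into ``as in \eqref{e:340}''.
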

\begin{proof}
1.  For simplicity, we omit conditional expectations. It follows from Lemma \ref{cor:estimate_hittings} that
\ba
\E \widetilde X_{\widetilde \tau^{\e,\delta}}^{\e,\delta}
&=  \e \E \int_0^{ \widetilde\tau^{\e,\delta}} b^0(\e^2 s, \e \widetilde X^{\e,\delta}_{  s}, \e \widetilde Y^{\e,\delta}_{s})\,\di s
  + \delta\E
  \int_0^{\widetilde \tau^{\e,\delta}} \beta({\e^2 s},  \e \widetilde Y_{s}^{\e,\delta})\,\di L^{0}_{s} (\widetilde X^{\e,\delta})\\
  &=\e \mathcal O(1)+\delta \mathcal O(1)=  \mathcal O(\e)+ \mathcal O(\delta).
\ea
On the other hand
\ba
\E \widetilde X_{\widetilde \tau^{\e,\delta}}^{\e,\delta}
&=-a_- \P( \widetilde X_{\widetilde \tau^{\e,\delta}}^{\e,\delta}=a_-)
+ a_+ \P( \widetilde X_{\widetilde \tau^{\e,\delta}}^{\e,\delta}=a_+)\\
&= - a_- (1-\P( \widetilde X_{\widetilde \tau^{\e,\delta}}^{\e,\delta}=a_+))
+ a_+ \P( \widetilde X_{ \widetilde\tau^{\e,\delta}}^{\e,\delta}=a_+).
\ea
This implies the estimate \eqref{e:estP}.

2. Tanaka's formula \eqref{eq:Tanaka_f} implies that
\ba
\E  L^0_{\widetilde \tau^{\e,\delta}}(\widetilde X^{\e,\delta})
&=\E| \widetilde X_{\widetilde \tau^{\e,\delta}}^{\e,\delta}|
- \E \int_0^{\widetilde \tau^{\e,\delta}} \sgn (\widetilde X_{s}^{\e,\delta})\,\di \widetilde X_{s}^{\e,\delta}\\
&=a_- \P( \widetilde X_{\widetilde \tau^{\e,\delta}}^{\e,\delta}=-a_-)+ a_+ \P( \widetilde X_{\widetilde
\tau^{\e,\delta}}^{\e,\delta}=a_+)
-  \e \E \int_0^{ \widetilde\tau^{\e,\delta}} \sgn (\widetilde X_{s}^{\e,\delta}) b^0(\e^2 s, \e \widetilde X^{\e,\delta}_{  s}, \e \widetilde Y^{\e,\delta}_{s})\,\di s\\
&=a_- \frac{   a_+}{a_-+a_+} + a_+ \frac{   a_-}{a_-+a_+}
+ \mathcal O(\e)+ \mathcal O(\delta)+ \e \mathcal O(1 )\E \widetilde\tau^{\e,\delta}\\
&= \frac{2a_-    a_+}{a_-+a_+}  + \mathcal O(\e)+ \mathcal O(\delta).
\ea
3.
We have 
\ba
\E ( \widetilde X_{\widetilde \tau^{\e,\delta}}^{\e,\delta})^2
&=a_-^2 \P( \widetilde X_{\widetilde \tau^{\e,\delta}}^{\e,\delta}=-a_-)
+ a_+^2 \P( \widetilde X_{ \widetilde\tau^{\e,\delta}}^{\e,\delta}=a_+)\\
&= a_-^2 \frac{   a_+}{a_-+a_+} + a_+^2 \frac{   a_-}{a_-+a_+} +\mathcal  O(\e)+\mathcal  O(\delta)\\
&=2a_- {   a_+}  + \mathcal O(\e)+ \mathcal O(\delta).
\ea
On the other hand it follows from the It\^o formula that
\ba
\E (\widetilde X^{\e,\delta}_{ \widetilde \tau^{\e,\delta} }  )^2
&=  \E \int_0^{\widetilde \tau^{\e,\delta} }
\Sigma^{00}(\e^2 s, \e \widetilde X^{\e,\delta}_{  s}, \e\widetilde Y^{\e,\delta}_{s})\,\di s
+ 2\e\E \int_0^{\widetilde \tau^{\e,\delta} }
\widetilde X^{\e,\delta}_s b^0(\e^2 s, \e \widetilde X^{\e,\delta}_{  s}, \e \widetilde Y^{\e,\delta}_{s})\,\di s\\
&=
\E \int_0^{\widetilde \tau^{\e,\delta} }
\Sigma^{00}(\e^2 s, \e \widetilde X^{\e,\delta}_{  s}, \e\widetilde Y^{\e,\delta}_{s})\,\di s
+ \mathcal O(\e)\\
&=
\Sigma^{00}(0,0,y) \E \widetilde \tau^{\e,\delta}
+ \E \int_0^{\widetilde \tau^{\e,\delta} }
(\Sigma^{00}(\e^2 s, \e \widetilde X^{\e,\delta}_{  s}, \e\widetilde Y^{\e,\delta}_{s})-\Sigma^{00}(0,0,y) )\,\di s
+ \mathcal O(\e)\\
\ea
By the Lipschitz property of $\Sigma^{00}$ we have
\ba
\label{e:340}
&\E \int_0^{\widetilde \tau^{\e,\delta} }
\Big|\Sigma^{00}(\e^2 s, \e \widetilde X^{\e,\delta}_{  s}, \e\widetilde Y^{\e,\delta}_{s})-\Sigma^{00}(0,0,y)\Big|\,\di s\\
&\leq C_\mathrm{Lip} \E \int_0^{\widetilde \tau^{\e,\delta} }
\sup_{u\in[0,\widetilde \tau^{\e,\delta} ]}\Big(|\e^2 u|+ |\e \widetilde X^{\e,\delta}_{u}|
+ \e|\widetilde Y^{\e,\delta}_u-\frac{y}{\e}|\Big)\,\di s=\mathcal O(\e),
\ea
see Lemma \ref{cor:estimate_hittings}, and the estimate \eqref{e:esttau} follows.

4. Taking the expectation in \eqref{e:tilde_X} we get
\ba
\E \widetilde X^{\e,\delta}_{\widetilde \tau^{\e,\delta} }
&=
\e\E \int_0^{\widetilde \tau^{\e,\delta} } b^0(\e^2 s, \e \widetilde X^{\e,\delta}_{  s}, \e \widetilde Y^{\e,\delta}_{s})\,\di s
+ \delta\E
\int_0^{\widetilde \tau^{\e,\delta} } \beta({\e^2 s}, \e \widetilde Y_{s}^{\e,\delta})\,\di L^{0}_{s} (\widetilde X^{\e,\delta})\\
&=
\e b^0(0, 0, y)\E \widetilde \tau^{\e,\delta}
+ \delta \beta(0,y)  \E L^{0}_{\widetilde \tau^{\e,\delta}} (\widetilde X^{\e,\delta}) \\
&+
\e\E \int_0^{\widetilde \tau^{\e,\delta} }
(b^0(\e^2 s, \e \widetilde X^{\e,\delta}_{  s}, \e \widetilde Y^{\e,\delta}_{s})- b^0(0, 0, y))\,\di s
+ \delta\E
\int_0^{\widetilde \tau^{\e,\delta} }
( \beta({\e^2 s}, \e \widetilde Y_{s}^{\e,\delta}) -\beta(0,y) )\,\di L^{0}_{s} (\widetilde X^{\e,\delta})\\
&=\e b^0(0, 0, y)\E \widetilde \tau^{\e,\delta}
+ \delta \beta(0,y)  \E L^{0}_{\widetilde \tau^{\e,\delta}} (\widetilde X^{\e,\delta})
+ \mathcal O(\e^2)+\mathcal O(\e\delta)
\ea
as in \eqref{e:340}. The proof is finished.
\end{proof}

The next corollary follows immediately from Lemma \ref{l:32} and Lemma \ref{lem:accu_estimates}.

\begin{cor}
\label{cor:accurate_estim_X}
Under the assumptions of Lemma \ref{lem:accu_estimates}, the following estimates hold:
\ba
\label{eq:accurate_estim}
\E \Big[ L^0_{ \tau^{\e,\delta}}(X^{\delta})\Big|\rF_0^\delta\Big]
&  =  \frac{ 2 a_-a_+}{a_-+a_+}\e +\mathcal O(\e^2)  + \mathcal O(\e\delta) ,\\
\E \Big[  \tau^{\e,\delta}\Big|\rF_0^\delta\Big] &= \frac{a_-a_+}{\Sigma^{00}(0,0,y)} \e^2 +\mathcal O(\e^3)  + \mathcal O(\e^2\delta),\\
\E  \Big[ X_{ \tau^{\e,\delta}}^{\e} \Big|\rF_0^\delta\Big]& =
 \frac{b^0(0,0,y)}{\Sigma^{00}(0,0,y)}a_-a_+\e^2 +\beta(0,y)\frac{2 a_-a_+}{a_-+a_+}\e\delta
 +\mathcal O(\e^3)+\mathcal O(\e^2\delta).
\ea
\end{cor}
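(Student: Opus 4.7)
The plan is to deduce each of the three identities from the corresponding asymptotic estimate in Lemma \ref{lem:accu_estimates} by applying the scaling relations of Lemma \ref{l:32}. Since the estimates in Lemma \ref{lem:accu_estimates} are conditional on $\widetilde \rF^{\e,\delta}_0$ with initial data $\widetilde X^{\e,\delta}_0=0$, $\widetilde Y^{\e,\delta}_0=y/\e$, translating back via the time rescaling $\widetilde X^{\e,\delta}_t=\e^{-1}X^\delta_{\e^2 t}$ amounts to conditioning on $\rF^\delta_0$ with $X^\delta_0=0$, $Y^\delta_0=y$; no further probabilistic work is needed.

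First, using relation 4 of Lemma \ref{l:32},
\ben
\E\bigl[L^0_{\tau^{\e,\delta}}(X^\delta)\,\bigl|\,\rF^\delta_0\bigr]
=\e\,\E\bigl[L^0_{\widetilde\tau^{\e,\delta}}(\widetilde X^{\e,\delta})\,\bigl|\,\widetilde\rF^{\e,\delta}_0\bigr]
=\e\Bigl(\tfrac{2a_-a_+}{a_-+a_+}+\mathcal O(\e)+\mathcal O(\delta)\Bigr),
\een
which yields the first claim since $\e\cdot\mathcal O(\e)=\mathcal O(\e^2)$ and $\e\cdot\mathcal O(\delta)=\mathcal O(\e\delta)$, with the same ``essential sup'' bounds on the error terms inherited from Lemma \ref{lem:accu_estimates}. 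The same substitution, using relation 1, gives the second identity from $\E\widetilde\tau^{\e,\delta}=\frac{a_-a_+}{\Sigma^{00}(0,0,y)}+\mathcal O(\e)+\mathcal O(\delta)$, after multiplying by $\e^2$.

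For the third identity, relation 2 of Lemma \ref{l:32} gives
\ben
\E\bigl[X^\delta_{\tau^{\e,\delta}}\,\bigl|\,\rF^\delta_0\bigr]
=\e\,\E\bigl[\widetilde X^{\e,\delta}_{\widetilde\tau^{\e,\delta}}\,\bigl|\,\widetilde\rF^{\e,\delta}_0\bigr]
=\e\Bigl(\tfrac{b^0(0,0,y)}{\Sigma^{00}(0,0,y)}a_-a_+\,\e+\beta(0,y)\tfrac{2a_-a_+}{a_-+a_+}\delta+\mathcal O(\e^2)+\mathcal O(\e\delta)\Bigr),
\een
and distributing the outer $\e$ produces the stated expansion. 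The only minor point to verify is that the uniform ``essential sup'' control on $|\mathcal O(\e)|/\e$ etc.\ from Lemma \ref{lem:accu_estimates} transfers to uniform control on $|\mathcal O(\e^2)|/\e^2$ and $|\mathcal O(\e\delta)|/(\e\delta)$ after multiplication by powers of $\e$, which is immediate.

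There is no genuine obstacle here, the corollary is a direct algebraic consequence of the scaling in Lemma \ref{l:32} combined with the expansions in Lemma \ref{lem:accu_estimates}; this is essentially why the authors state it as a corollary rather than a standalone lemma. The only bookkeeping is to confirm that the constants in the error bounds depend only on $C$ and $C_\mathrm{Lip}$, which follows since the scaling factors $\e\in(0,1]$ do not worsen the uniform bounds.
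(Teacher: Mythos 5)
Your proposal is correct and is exactly the argument the paper intends: the authors state only that the corollary ``follows immediately from Lemma \ref{l:32} and Lemma \ref{lem:accu_estimates}'', and your write-up fills in precisely that route, using relations 1, 2 and 4 of Lemma \ref{l:32} together with $\widetilde\rF_0^{\e,\delta}=\rF_0^\delta$ and the matching of initial data to rescale each expansion by the appropriate power of $\e$. The bookkeeping on the error terms (the essential-sup bounds being preserved under multiplication by $\e$ or $\e^2$) is also handled correctly.
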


\section{Proof of Theorem \ref{t:existU}\label{s:e}}

\subsection{Existence. Proof of Theorem \ref{t:existU} 1)}

The proof of existence is done by compactness argument.

We start with a general result about weakly relative compactness of SDEs with local times.

\begin{thm}
\label{t:Zncomp}
Let for each $n\in\mathbb N$, $(W^n_t,\rF^n_t)_{t\in[0,\infty)}$ be a standard $d$-dimensional Wiener process, and
let $\sigma_{l}^n$, $b^n$, $\beta^n$ be $\rF_t^n$-adapted c\`adl\`ag processes
such that there is $C\in(0,\infty)$ such that
\ba
\sup_{n\in\mathbb N}\sup_{t\in [0,\infty)}\Big(\|\sigma^n(t)\|+|b^n(t)|+|\beta^n(t)|\Big)\leq C
\ea
with probability 1.
Assume that $\{Z^n\}_{n\in\mathbb N}$ is a family of real valued It\^o processes
\ba
\label{e:Zn}
 Z^n_t=Z^n_0 + \sum_{l=1}^d \int_0^t \sigma_l^n (s)\, \di W^{l,n}_s
+  \int_0^t b^n(s)\,\di s +  \int_0^t \beta^n(s)\, \di L^{0}_s(Z^n),
\ea
where $L^{0}$ is a symmetric semimartingale local time at $0$.
Let $\{Z_0^n\}_{n\in\mathbb N}$ be weakly relative compact family of initial conditions.
Then the sequence  $\{Z^n,  L^{0}(Z^n)\}_{n\in\mathbb N}$ is weakly relative compact
in $C(\bR_+,\bR\times \bR_+)$.
\end{thm}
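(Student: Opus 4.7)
My plan is to prove tightness of $\{(Z^n, L^0(Z^n))\}_n$ in $C(\bR_+, \bR\times\bR_+)$ by producing uniform second-moment bounds on $Z^n$ via an It\^o formula applied to $(Z^n)^2$, translating them into bounds on $L^0(Z^n)$ through Tanaka's formula, and then invoking a Kolmogorov--Chentsov criterion piece-by-piece on a convenient semimartingale decomposition.

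First I would decompose $Z^n_t = Z^n_0 + M^n_t + D^n_t + B^n_t$, where $M^n_t = \sum_{l=1}^d \int_0^t \sigma^n_l(s)\,\di W^{l,n}_s$ is a continuous martingale with $\langle M^n\rangle_t \leq C^2 t$, $D^n_t = \int_0^t b^n(s)\,\di s$ is globally $C$-Lipschitz in $t$, and $B^n_t = \int_0^t \beta^n(s)\,\di L^0_s(Z^n)$ satisfies $|B^n_t - B^n_s|\leq C|L^0_t(Z^n) - L^0_s(Z^n)|$. Tanaka's formula \eqref{eq:Tanaka_f}, combined with the fact that $L^0(Z^n)$ is carried by $\{s\colon Z^n_s = 0\}$ while $\sgn 0 = 0$ eliminates the $B^n$ contribution, yields the key identity
\ba
L^0_t(Z^n) = |Z^n_t| - |Z^n_0| - \widetilde M^n_t - \widetilde D^n_t,
\ea
where $\widetilde M^n_t = \sum_l \int_0^t \sgn(Z^n_s)\sigma^n_l(s)\,\di W^{l,n}_s$ is again a martingale with $\langle \widetilde M^n\rangle_t \leq C^2 t$, and $\widetilde D^n_t = \int_0^t \sgn(Z^n_s) b^n(s)\,\di s$ is $C$-Lipschitz.

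The decisive ingredient is the uniform moment estimate. Applying It\^o to $(Z^n_t)^2$ and using the crucial identity $Z^n_s\,\di L^0_s(Z^n) \equiv 0$ annihilates the local-time term, leaving
\ba
(Z^n_t)^2 = (Z^n_0)^2 + 2\int_0^t Z^n_s\,\di M^n_s + 2\int_0^t Z^n_s b^n(s)\,\di s + \int_0^t \|\sigma^n(s)\|^2\,\di s.
\ea
Stopping at $\sigma^n_R = \inf\{t\colon |Z^n_t|\geq R\}$ and restricting to the tight event $\{|Z^n_0|\leq K\}$, then passing to expectations and applying Cauchy--Schwarz followed by Gronwall, produces a bound on $\E[(Z^n_{t\wedge\sigma^n_R})^2\,\mathbf 1_{|Z^n_0|\leq K}]$ depending only on $C, T, K$. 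Sending $R\to\infty$ gives $\sup_n\E[(Z^n_t)^2\,\mathbf 1_{|Z^n_0|\leq K}] < \infty$, and the Tanaka identity then upgrades this to $\sup_n\E[L^0_t(Z^n)\,\mathbf 1_{|Z^n_0|\leq K}] < \infty$ for every $T, K$.

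With these bounds in hand, tightness is piece-by-piece. The Burkholder--Davis--Gundy inequality yields $\E|M^n_t-M^n_s|^{2p}\leq C_p|t-s|^p$ and analogously for $\widetilde M^n$, so the Kolmogorov--Chentsov criterion gives tightness of both martingale families in $C(\bR_+,\bR)$; $D^n$ and $\widetilde D^n$ are uniformly $C$-Lipschitz, hence equicontinuous. Tightness of $\{L^0(Z^n)\}$ then follows from the Tanaka identity combined with tightness of $|Z^n_0|$, $\widetilde M^n$, and $\widetilde D^n$, with monotonicity of $L^0$ upgrading endpoint tightness to the necessary modulus-of-continuity estimate. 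Tightness of $B^n$ follows from its domination by $L^0$, and tightness of $Z^n$ from its decomposition; joint tightness of the pair is automatic. The main obstacle is the moment bound: since $|\beta^n|\leq C$ may exceed $1$, the naive Gronwall on $\E L^0_t$ obtained by iterating $L^0_t \leq |Z^n_t| + \cdots$ against $|Z^n_t|\leq \cdots + CL^0_t$ does not close, and the It\^o-on-$(Z^n)^2$ identity that exploits $Z^n_s\,\di L^0_s(Z^n) = 0$ is precisely what bypasses this circularity.
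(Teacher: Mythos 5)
Your overall strategy coincides with the paper's: apply It\^o to $(Z^n)^2$ to annihilate the local-time term and obtain moment bounds, then apply Tanaka to $|Z^n|$ to express $L^0(Z^n)$, and assemble tightness piece by piece. The moment bound via stopping and Gronwall, the BDG--plus--Kolmogorov--Chentsov treatment of $M^n$ and $\widetilde M^n$, and the equi-Lipschitz control of $D^n$ and $\widetilde D^n$ are all fine and match the paper's "standard argument."

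There is, however, a genuine gap in the step where you conclude tightness of $\{L^0(Z^n)\}$. The Tanaka identity is
\ba
L^0_t(Z^n) = |Z^n_t| - |Z^n_0| - \widetilde M^n_t - \widetilde D^n_t,
\ea
and the right-hand side contains $|Z^n_t|$, whose tightness \emph{in $C$} you have not established; you only have the time-pointwise moment bound $\sup_n \E[(Z^n_t)^2\,\mathbf 1_{|Z^n_0|\le K}]<\infty$. You try to bridge this with the claim that "monotonicity of $L^0$ upgrad[es] endpoint tightness to the necessary modulus-of-continuity estimate," but that implication is false: a sequence of continuous nondecreasing processes with $L_n(0)=0$ and uniformly bounded (hence tight) endpoints need not be tight in $C$ --- take $L_n(t)=(n t - n/2)^+\wedge 1$, which has endpoint $\le 1$ but a blowing-up modulus. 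Nor does a naive iteration close, precisely because $C=\|\beta^n\|_\infty$ may exceed $1$, which is the very circularity you flag for moments but then do not address for the modulus.

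Two ways to repair it. The route the paper takes: from the It\^o identity for $(Z^n)^2$ and the moment bounds, the integrands $Z^n_s\sigma^n_l(s)$ and $Z^n_s b^n(s)$ have uniformly controlled moments, so BDG plus Kolmogorov--Chentsov gives tightness of $\{(Z^n)^2\}$ in $C$; since $u\mapsto\sqrt u$ is continuous, $\{|Z^n|\}$ is tight in $C$, and then Tanaka gives tightness of $\{L^0(Z^n)\}$ directly. Alternatively, one can avoid tightness of $|Z^n|$ altogether by observing that Tanaka together with $|Z^n|\ge 0$ and $\int |Z^n_s|\,\di L^0_s(Z^n)=0$ exhibits $(|Z^n|,L^0(Z^n))$ as the Skorokhod reflection of $\Psi^n:=|Z^n_0|+\widetilde M^n+\widetilde D^n$, so $L^0_t(Z^n)=\sup_{s\le t}(-\Psi^n_s)^+$; this is a Lipschitz functional of $\Psi^n$ on $C$, and tightness of $\{\Psi^n\}$ (which follows from what you have proved about $|Z^n_0|$, $\widetilde M^n$, $\widetilde D^n$) then transfers to $\{L^0(Z^n)\}$. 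Either repair would make the piece-by-piece assembly you describe (tightness of $B^n$, then of $Z^n$, then joint tightness) go through.
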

\begin{proof}
 The weak relative compactness of stochastic and Lebesgue integrals
 $\{\int_0^\cdot \sigma_{l}^n(s)\, \di W^{l,n}_s \}$,  $\{\int_0^\cdot b^n(s)\,\di s\}$
 is due to the uniform boundedness of the coefficients $\sigma^n$ and $b^n$.

Applying the It\^o formula to $(Z^n)^2$, we cancel the term with the local time and get
 \ba
(Z^n_t)^2=(Z^n_0)^2
+ \sum_{l=1}^d \int_0^t Z^n_s\sigma_l^n (s)\, \di W^{l,n}_s
+  2 \int_0^t Z^n_s b^n(s)\,\di s
+\sum_{l=1}^d \int_0^t (\sigma_l^n (s))^2\, \di s.
\ea
Since the integrands satisfy the linear growth condition, the weak relative compactness of $\{(Z^n)^2\}$ follows
a standard argument. In particular, we obtain the weak relative compactness of $\{|Z^n|\}$.

Applying the Tanaka formula to $|Z^n|$ we get
\ba
|Z^n_t| &=|Z^n_0|+ \sum_{l=1}^d \int_0^t \sgn(Z^n_s)\sigma_l^n (s)\, \di W^{l,n}_s
+  \int_0^t \sgn(Z^n_s) b^n(s)\,\di s   + L^0_t(Z^n).
\ea
Since the stochastic and Lebesgue integrals are weakly relatively compact due to the uniform boundedness of the integrands,
we get the weak relative compactness of $\{L^0(Z^n)\}$.

Eventually, since $\beta^n$ are uniformly bounded, we have the w.r.c.\ of
the integrals
$\{\int_0^\cdot  \beta^n(s)\, \di L^{0}_s(Z^n)\}$.

Let now $Z$ be a condensation point of $\{Z^n\}$. Passing to a subsequence if necessary,
we apply
Proposition 1.17 p.\ 526 in \cite{JacodS-03}
to the sequence of stochastic integrals to get that its limit is a continuous local martingale.
The Lebesgue integrals converge to a process of bounded variation, as well as the integrals w.r.t.\ $L^0(Z^n)$.
\end{proof}

\begin{thm}
\label{t:Znconv}
Assume that the family $\{Z^n\}$ satisfies the conditions of Theorem \ref{t:Zncomp}.
Let $(W_t,\rF_t)_{t\geq 0}$ be a standard $d$-dimensional Wiener process, and
let $Z$, $\sigma_l$, $b$, $\beta$ be continuous $\rF_t$-adapted processes.

Assume that for any $T\in[0,\infty)$,
\ba
\label{e:conv1}
\sup_{t\in[0,T]}\Big( | Z^n_t - Z_t|+\|\sigma^n(t)-\sigma(t)\|+
|b^n(t)-b(t)|+ |\beta^n(t)-\beta (t)|+ \|W^{n}_t-W_t\| \Big)\to 0, \ n\to\infty,
\ea
with probability 1, and for any $t\in [0,\infty)$
\ba
\label{e:sigma}
\sum_{l=1}^d(\sigma_l(t))^2>0
\ea
with probability 1.
Then the process
$Z$ satisfies the equation
\ba
Z_t=Z_0 + \sum_{l=1}^d \int_0^t \sigma_{l}(s)\, \di W^{l}_s +  \int_0^t b(s)\,\di s +  \int_0^t \beta(s)\, \di L^{0}_s( Z).
\ea
\end{thm}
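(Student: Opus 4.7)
The plan is to pass to the limit term-by-term in \eqref{e:Zn} for $Z^n$ and identify $Z$ as a solution of the target SDE. The initial value $Z^n_0 \to Z_0$ and the drift integral $\int_0^t b^n(s)\,\di s \to \int_0^t b(s)\,\di s$ follow immediately from \eqref{e:conv1} via bounded convergence. For the stochastic integrals, the uniform convergences $\sigma^n \to \sigma$ and $W^n \to W$ together with the uniform boundedness of $\sigma^n$ allow one to apply Proposition~1.17 on p.~526 of \cite{JacodS-03} (the tool used in the proof of Theorem~\ref{t:Zncomp}), yielding $\int_0^t \sigma^n_l(s)\,\di W^{l,n}_s \to \int_0^t \sigma_l(s)\,\di W^l_s$. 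Subtracting these from \eqref{e:Zn}, the remaining local-time term $\int_0^t \beta^n(s)\,\di L^0_s(Z^n)$ must converge to some process $M_t$; the real task is to identify $M_t = \int_0^t \beta(s)\,\di L^0_s(Z)$.

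The strategy is to first establish $L^0(Z^n) \to L^0(Z)$ uniformly on compacts, and then deduce the identification of $M$ from the uniform convergence $\beta^n \to \beta$ together with the weak convergence of the associated Stieltjes measures. Applying Tanaka's formula to $|Z^n|$ and using that $L^0(Z^n)$ charges only $\{Z^n = 0\}$ where $\sgn$ vanishes, one obtains
\[
L^0_t(Z^n) = |Z^n_t| - |Z^n_0| - \sum_{l=1}^d \int_0^t \sgn(Z^n_s)\sigma^n_l(s)\,\di W^{l,n}_s - \int_0^t \sgn(Z^n_s) b^n(s)\,\di s.
\]
The obstacle is passing to the limit in the two $\sgn$-integrals, whose integrands are discontinuous at $0$.

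This is exactly where the non-degeneracy condition \eqref{e:sigma} becomes indispensable. The limit $Z$ is a continuous semimartingale (as a uniform limit of semimartingales whose martingale and bounded-variation parts converge separately), so the occupation time formula yields $\int_0^t \bI(Z_s=0)\,\di\langle Z\rangle_s = 0$ a.s. Since $\di\langle Z\rangle_s/\di s = \sum_l (\sigma_l(s))^2 > 0$ a.s., the Lebesgue measure of $\{s \in [0,t]\colon Z_s = 0\}$ vanishes a.s., and combined with the uniform convergence $Z^n \to Z$ this gives $\sgn(Z^n_s) \to \sgn(Z_s)$ for Lebesgue-a.e.\ $s$ a.s. Dominated convergence then handles the Lebesgue $\sgn$-integral, while It\^o's isometry combined with bounded convergence (after the standard step of replacing $W^{l,n}$ by $W^l$) handles the stochastic $\sgn$-integral. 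Identifying the limit via Tanaka applied to $|Z|$ yields $L^0(Z^n) \to L^0(Z)$ uniformly on compacts; the corresponding Stieltjes integrals against $\beta^n \to \beta$ then converge, completing the identification of $M$ and hence the SDE for $Z$. The principal obstacle throughout is the discontinuity of $\sgn$ at $0$, which is bypassed precisely thanks to \eqref{e:sigma}.
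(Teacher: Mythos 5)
Your overall strategy coincides with the paper's: pass to the limit term-by-term, use the occupation-time formula together with \eqref{e:sigma} to conclude that $\{s\colon Z_s=0\}$ has Lebesgue measure zero, hence $\sgn(Z^n_s)\to\sgn(Z_s)$ for a.e.\ $s$ a.s., and then pass to the limit in the Tanaka formula for $|Z^n|$ to obtain $L^0(Z^n)\to L^0(Z)$, concluding via convergence of Stieltjes integrals. All the right ingredients are there.

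However, there is a genuine gap exactly where you write that ``It\^o's isometry combined with bounded convergence (after the standard step of replacing $W^{l,n}$ by $W^l$) handles the stochastic $\sgn$-integral.'' This replacement is \emph{not} a standard step, and it is precisely the crux of the argument. The issue is that the limiting integrand $s\mapsto\sgn(Z_s)$ is \emph{not} c\`adl\`ag (it jumps at every entry to and exit from zero), so the standard stochastic-integral convergence results (Kurtz--Protter Theorem~2.2, which the paper uses for the regular $\sigma^n\,\di W^n$ integrals) do not apply to $\int_0^t\sgn(Z^n_s)\sigma^n_l(s)\,\di W^{l,n}_s$. Moreover, It\^o's isometry controls a single stochastic integral against a single Brownian motion; it does not directly bound a difference $\int\sgn(Z^n)\sigma^n\,\di W^{l,n}-\int\sgn(Z)\sigma\,\di W^l$ where the integrators are different processes. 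The paper handles this with a dedicated three-term decomposition: it approximates the integrand in $L^2(\di s\times\di\P)$ by a step process evaluated at deterministic times $t_k$ chosen so that $Z_{t_k}\neq 0$ a.s.\ (possible because $Z_t$ has a density for a.e.\ $t$ by \eqref{e:sigma}); then the $W^{l,n}$-integral and the $W^l$-integral of their respective step approximants become \emph{finite sums} that converge term-by-term a.s.\ because $Z^n_{t_k}\to Z_{t_k}\neq 0$ and $W^{l,n}_{t_k}\to W^l_{t_k}$, while the approximation errors on each side are controlled by It\^o's isometry for a fixed Brownian motion. Without spelling out this discretization-at-well-chosen-times device, the proposal does not actually show convergence of the stochastic $\sgn$-integrals, and this is the step the paper explicitly flags as the one where the off-the-shelf tools break down.

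A smaller point: you assert that $Z$ is a continuous semimartingale because ``martingale and bounded-variation parts converge separately,'' but to justify this one must first show that the limit of $\int_0^\cdot\beta^n\,\di L^0(Z^n)$ is of locally bounded variation. The paper does this by bounding $\mathrm{Var}_{[0,T]}$ of these integrals by $C\,L^0_T(Z^n)$ and using the tightness of $\{L^0_T(Z^n)\}$ from Theorem~\ref{t:Zncomp}; you should include this to make the semimartingale identification (and hence the occupation-time argument) rigorous.
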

\begin{proof}
1. Convergence of the Lebesgue integrals follows from the dominated convergence theorem.

2.
The local uniform convergence in probability of stochastic integrals
follows from Theorem 2.2 in \cite{KurtzP-91}.

3. From \eqref{e:Zn} we can represent the integrals w.r.t.\ $L^0(Z^n)$ as
\ba
\int_0^t \beta^n(s)\, \di L^{0}_s(Z^n)
=Z^n_t-Z^n_0 - \sum_{l=1}^d \int_0^t \sigma_l^n (s)\, \di W^{l,n}_s
-\int_0^t b^n(s)\,\di s.
\ea
It follows from 1., 2.\ and \eqref{e:conv1} that sequence $\{\int_0^\cdot \beta^n(s)\, \di L^{0}_s(Z^n)\}$
is locally uniformly convergent in probability. Its limit is a process of locally bounded variation. Indeed,
for any $T\in[0,\infty)$ we have
\ba
\mathrm{Var}_{[0,T]}\Big(\int_0^\cdot\beta^n(s)\, \di L^{0}_s( Z^n)\Big)\leq C  L^{0}_T( Z^n).
\ea
It follows from Theorem \ref{t:Zncomp} that the family $\{L_T^0(Z^n)\}$ is bounded in probability, therefore
the sequence of variations $\mathrm{Var}_{[0,T]}\Big(\int_0^\cdot\beta^n(s)\, \di L^{0}_s( Z^n)\Big)$ is also
bounded in probability, and the statement follows.

4. The limit process $Z$ is a continuous semimartingale due to Proposition 1.17 p.\ 526 in \cite{JacodS-03} and the fact that the limits
of Lebesgue integrals and integrals w.r.t.\ $L^0(Z^n)$ are processes of locally bounded variation.
Its martingale component equals to
\ba
M_t:=\sum_{l=1}^d \int_0^t \sigma_{l}(s)\, \di W^{l}_s
\ea
with the bracket
\ba
\langle M_t\rangle:=\sum_{l=1}^d \int_0^t |\sigma_{l}(s)|^2\, \di s.
\ea
Due to assumption \eqref{e:sigma} and the occupation times formula
\ba
\label{e:zero}
\int_0^\infty \bI(Z_t=0)\,\di t=0 \quad\text{a.s.}
\ea

3. It is left to establish convergence of integrals w.r.t.\ the local times. To do this, we prove convergence of the local times.
The Tanaka formula yields:
\ba
\label{e:LL}
 L^0_t(Z^n)
 =|Z^n_t|-|Z^n_0|
 - \sum_{l=1}^d \int_0^t \sgn(Z^n_s)\sigma_l^n (s)\, \di W^{l,n}_s
-  \int_0^t \sgn(Z^n_s) b^n(s)\,\di s.
\ea
From \eqref{e:zero}, it follows for almost all $s\in[0,\infty)$, that
\ba
\P\Big(\lim_{n\to\infty}\sgn(Z^n_s) = \sgn(Z_s)\Big)=1.
\ea
In \eqref{e:LL}, the Lebesgue integrals converge a.s.\ by the dominated convergence theorem.

It is left to demonstrate the convergence of the stochastic integrals. Since $s\mapsto\sgn(Z_s)$
is not c\`adl\`ag, the standard convergence results as, e.g., Theorem 2.2 in \cite{KurtzP-91}, cannot be applied directly.
Therefore we now prove this convergence.

Let $l=1,\dots,d$ be fixed.
We also fix an arbitrary $\e\in(0,1]$ and find $N\in\mathbb N$ and time instants
$0=t_0<t_1\dots<t_N\leq t$ such that
with probability one $Z_{t_k}\neq 0$ for all $k=1,\dots,N$, and the following approximation holds:
\ba
\label{e:eps}
\E \int_0^t \Big|\sgn(Z_s)\sigma_l (s) - \sum_{k=1}^{N-1} \sgn(Z_{t_k})\sigma_l (t_k)\bI_{[t_k,t_{k+1})}(s)\Big|^2\,\di s<\e.
\ea
We have the following elementary estimate:
\ba
\label{e:eps1}
\E\Big|\int_0^t & \sgn(Z^n_s)\sigma_l^n (s)\, \di W^{l,n}_s - \int_0^t \sgn(Z_s)\sigma_l (s)\, \di W^{l}_s\Big|^2\\
&\leq 3 \E\Big|\int_0^t \sgn(Z^n_s)\sigma_l^n (s) \, \di W^{l,n}_s
- \int_0^t \sum_{k=1}^{N-1} \sgn(Z^n_{t_k})\sigma_l^n (t_k)\bI_{[t_k,t_{k+1})}(s) \, \di W^{l,n}_s\Big|^2\\
&+3 \E\Big|\int_0^t\sum_{k=1}^{N-1} \sgn(Z^n_{t_k})\sigma_l^n (t_k)\bI_{[t_k,t_{k+1})}(s)\, \di W^{l,n}_s
- \int_0^t   \sum_{k=1}^{N-1} \sgn(Z_{t_k})\sigma_l (t_k)\bI_{[t_k,t_{k+1})}(s)  \, \di W^{l}_s\Big|^2\\
&+3 \E\Big|\int_0^t  \sum_{k=1}^{N-1} \sgn(Z_{t_k})\sigma_l (t_k)\bI_{[t_k,t_{k+1})}(s)\, \di W^{l}_s
- \int_0^t \sgn(Z_s)\sigma_l (s)\, \di W^{l}_s\Big|^2.
\ea
Applying the It\^o isometry to the first and the third stochastic integrals in \eqref{e:eps1} yields
\ba
\E\Big|\int_0^t & \sgn(Z^n_s)\sigma_l^n (s)\, \di W^{l,n}_s - \int_0^t \sgn(Z_s)\sigma_l (s)\, \di W^{l}_s\Big|^2\\
&\leq 3 \E\int_0^t \Big|\sgn(Z^n_s)\sigma_l^n (s)
-  \sum_{k=1}^{N-1} \sgn(Z^n_{t_k})\sigma_l^n (t_k)\bI_{[t_k,t_{k+1})}(s)\Big|^2
\, \di s\\
&+3 \E\Big|
\sum_{k=1}^{N-1} \sgn(Z^n_{t_k})\sigma_l^n (t_k)(W^{l,n}_{t_{k+1}}-W^{l,n}_{t_k})
-  \sum_{k=1}^{N-1} \sgn(Z_{t_k})\sigma_l (t_k) (W^{l}_{t_{k+1}}-W^{l}_{t_k}) \Big|^2\\
&+3 \E\int_0^t  \Big|\sum_{k=1}^{N-1} \sgn(Z_{t_k})\sigma_l (t_k)\bI_{[t_k,t_{k+1})}(s)
-  \sgn(Z_s)\sigma_l (s)\Big|^2\,\di s= E^{l,n,N}_1+E^{l,n,N}_2+E_3^{l,N}.
\ea
For $N$ large enough, the term $E_3^{l,N}$ is bounded by $3\e$ by \eqref{e:eps}.
By Lebesgue's theorem,
a.s.\ convergence $Z^n_s\to Z_s$ for each $s\in[0,\infty)$, and the fact that $Z(t_k)\neq 0$ a.s.\ for all $k=1,\dots,N$ we get
\ba
\lim_n E^{l,n,N}_1= E_3^{l,N}\leq 3\e.
\ea
Finally,
\ba
\limsup_n E^{l,n,N}_2=0,
\ea
because of the a.s.\ convergence $W^{l,n}_{t_k}\to W^l_{t_k}$ for all $k=1,\dots,N$,
a.s.\ convergence $Z^n_s\to Z_s$ and the fact that $Z(t_k)\neq 0$ a.s.\ for all $k=1,\dots,N$.

This finishes the proof of the Theorem.
\end{proof}

Now we finish the proof of Theorem \ref{t:existU} 1).
Let $\e\in (0,1]$ and $\delta\in(0,1]$ be fixed. For brevity, we omit these indices and denote
$(X,Y)=(X^{\delta},Y^{\delta})$ and $a_k:=a^\e_k$, $k\in\mathbb Z$.
Let us establish existence of a local solution is a neighbourhood of one membrane,
which is, for definiteness, located
at $x=0$. Formally this means that $a_0=0$, $\beta(t,0,Y)=\beta(t,Y)$,
$\theta(t,0,Y)=\theta(t,Y)$,
and all $\beta(\cdot, a_k,\cdot)$, $\theta(\cdot, a_k,\cdot)\equiv 0$ for $k\in \mathbb Z\backslash\{0\}$.

The equation \eqref{e:XY} takes the form
\ba
\label{e:XY0}
X_t&= x+\sum_{l=1}^m \int_0^t \sigma_{l}^0(s, X_s,Y_s)\,\di W^l_s
+ \int_0^t b^0(s,X_s,Y_s)\,\di s
+\delta \int_0^t \beta(s,Y_s)\,\di L^{0}_s (X),\\
Y^{i}_t&= y^i+\sum_{l=1}^m \int_0^t \sigma^{i}_l(s,X_s,Y_s)\,\di W^l_s
+\int_0^t b^i(s,X_s,Y_s)\,\di s
+ \delta\int_0^t  \theta^i(s,Y_s)\,\di L^0_s (X),\\
&\quad i=1,\dots,d,\ t\in[ 0,\infty).
\ea
Let for $n\in\mathbb N$
\ba
\phi_n(s)=\frac{k}{n},\quad s\in\Big[\frac{k}{n},\frac{k+1}{n}\Big),\quad k\in\mathbb N_0.
\ea
Now we consider a supplementary sequence of stochastic processes $(X^{n}, Y^{n})$, which are solutions of
SDEs with ``frozen'' coefficients on each time interval $[\frac{k}{n},\frac{k+1}{n})$:
\ba
\label{e:XY0n}
X_t^{n}&= x+\sum_{l=1}^m \int_0^t \sigma_{l}^0(\phi_n(s), X^n_{\phi_n(s)},Y^n_{\phi_n(s)})\,\di W^l_s
+ \int_0^t b^0(\phi_n(s),X^n_{\phi_n(s)},Y^n_{\phi_n(s)})\,\di s
+\delta \int_0^t \beta(\phi_n(s),Y^n_{\phi_n(s)})\,\di L^{0}_s (X^n),\\
Y^{i,n}_t&= y^i+\sum_{l=1}^m \int_0^t \sigma^{i}_l(\phi_n(s),X^n_{\phi_n(s)},Y^n_{\phi_n(s)})\,\di W^l_s
+\int_0^t b^i(\phi_n(s),X^n_{\phi_n(s)},Y^n_{\phi_n(s)})\,\di s
+ \delta\int_0^t  \theta^i(\phi_n(s),Y^n_{\phi_n(s)})\,\di L^0_s (X^n),\\
&\quad i=1,\dots,d,\ t\in[0,\infty).
\ea
On each time interval $[\frac{k}{n},\frac{k+1}{n})$, the coefficients of \eqref{e:XY0n} are time-constant
$\rF_{k/n}$-measurable random elements.
Therefore, on each time interval $[\frac{k}{n},\frac{k+1}{n})$, the solution $X^{n}$ exists as and unique,
provided,
\ba
\delta \|\beta\|_\infty<1,
\ea
see Theorem 2.3 in \cite{legall1984one}, and has a law of a time-changed skew Brownian motion with drift started at $X_{k/n}^{n}$.
On each time interval $[\frac{k}{n},\frac{k+1}{n})$,
the process $Y^{n}$ is obtained as a sum of a stochastic and Lebesgue--Stieltjes integrals
of $\rF_{k/n}$-measurable random constants.

For any $\delta$ small and fixed, Theorem \ref{t:Zncomp} implies that the sequence of vector valued processes
\ba
\label{e:seq}
\{(X^{n}, Y^{1,n},\dots, Y^{d,n}, W^1,\dots,W^m)\}_{n\in\mathbb N}
\ea
is weakly relatively compact in $C(\bR_+,\bR\times\bR^d\times\bR^m)$.

Let
\ba
\label{e:XXX}
(X, Y^{1},\dots, Y^{d}, W^1,\dots,W^m)
\ea
be a weak limit of some subsequence of \eqref{e:seq}. By the Skorokhod representation theorem, see Section 6
in \cite{billingsley2013convergence},
there is a probability space and copies of this subsequence
which converge to a copy of this limit locally uniformly with probability one. Therefore, by Theorem \ref{t:Znconv}
the process \eqref{e:XXX} is a weak solution to the SDE \eqref{e:XY0}.

Now, consider the original system \eqref{e:XY} with infinitely many membranes. We have to show that the solution
obtained from local solutions glued together does not blow up. We fix $\e\in(0,1]$ and $\delta\|\beta\|_\infty<1$ and
omit them in the notation.

Denote by $S(a_k)=(a_{k-1},a_{k+1})$ the stripe around the $x$-location of the $k$-th membrane.

A local solution $(X^{(0)},Y^{(0)})$ behaves like a solution of an SDE without local time terms until the
first hitting time
\ba
\tau_0&=\inf\Big\{t\in[0,\infty)\colon X^{(0)}_t \in \{a_k\}_{k\in\mathbb Z}\Big\}.
\ea
This solution exists as a weak solution driven by some $m$-dimensional Brownian motion $W=W^{(0)}$
on the interval $[0,\tau_0]$.

By the previous argument, we construct a process $(X^{(1)},Y^{(1)},W^{(1)})$
starting at $(X^{(0)}_{\tau_0},Y^{(0)}_{\tau_0})$ with some Brownian motion $W^{(1)}$ independent of the
behaviour of the process on $[0,\tau_0]$, until the hitting time
\ba
\tau_1&=\inf\{t\in[0,\infty)\colon X^{(1)}_t\notin S( X^{(0)}_{\tau_0})\}.
\ea
Define the Brownian motion $W$ on $[0,\tau_0+\tau_1]$ as
\ba
W_t=\begin{cases}
     W^{(0)}_t,\ t\in [0,\tau_0],\\
     W^{(0)}_{\tau_0} + W^{(1)}_{t-\tau_0},\ t\in (\tau_0,\tau_0+\tau_1],\\
    \end{cases}
\ea
and the process $(X,Y)$ as
\ba
(X_t,Y_t)=\begin{cases}
     (X^{(0)}_t,Y^{(0)}_t),\ t\in [0,\tau_0],\\
      (X^{(1)}_{t-\tau_0},Y^{(1)}_{t-\tau_0}),\ t\in (\tau_0,\tau_0+\tau_1].
    \end{cases}
\ea
The process $(X,Y,W)$ is a weak solution to \eqref{e:XY} on the interval  $[0,\tau_0+\tau_1]$.

Analogously, we construct the hitting times $\tau_2,\tau_3,\dots$ and
extend the solution $(X,Y,W)$ to each
random interval $[0,\tau_0+\cdots+\tau_n]$, $n\in\mathbb N_0$. It is left to show that
$\sum_{n=0}^\infty \tau_n=+\infty$ with probability one.

It follows from Lemma \ref{cor:estimate_hittings} that there are constants $c,C\in(0,\infty)$ that depend on
$\e$  and $\delta$ but do not depend on $n\in\mathbb N$
such that
\ba
\E (\tau_0+\cdots+\tau_n) &\geq c n,\\
\operatorname{Var} (\tau_0+\cdots+\tau_n) & \leq C n,\quad n\in\mathbb N.
\ea
Then, by Chebyshev's inequality
\ba
\P\Big(\tau_0+\cdots+\tau_n\leq \frac{cn}{2}\Big)&\leq \P\Big(|\tau_0+\cdots+\tau_n-\E (\tau_0+\cdots+\tau_n) \Big|\geq cn/2 \Big)\\
&\leq \frac{4}{c^2n^2}  \operatorname{Var} (\tau_0+\cdots+\tau_n)
\leq \frac{4C}{c^2n}\to 0,\quad n\to \infty.
\ea
Therefore, the solution $(X,Y,W)$ is well defined for all $t\in[0,\infty)$.

\subsection{Existence and uniqueness for time independent coefficients}

Let all functions $\beta$,   $b$, $\theta$, and $\sigma$ satisfy conditions of the second statement
of Theorem \ref{t:existU}, and let $\e$ and $\delta$ be fixed. 

Since existence of a solution $ (X^{\delta,\e}, Y^{\e,\delta})$ is already established,
it is sufficient to verify weak uniqueness in a sufficiently small neighbourhood of any point
$(x,y)\in \bR\times\bR^d$, see \S 6.6 in \cite{StrVar}. Note that although the exposition
in \cite{StrVar} focuses on diffusions without local times, the localization results there hold for
continuous stochastic processes in general.
The strong Markov property of the solution will follow from uniqueness, see \S 6.2 in \cite{StrVar}.

If a neighbourhood of $(x,y)$ does not contain points of the interface, the uniqueness follows
from the continuity and uniform ellipticity of the matrix $\sigma$, see Theorem 7.2.1 in \cite{StrVar}.

Therefore, we consider the system \eqref{e:XY} in a neighbourhood of one membrane $a_k^\e\times \bR^d$.
Without loss of generality we may assume that $k=0$ and
$a_0^\e=0$. For brevity, we omit the superscripts $\e$ and $\delta$.
Since outside the membrane
$(X,Y)$ is a diffusion with regular coefficients, we assume that the initial values are $x_0=0$ and $y_0\in\bR^d$.
With some abuse of notation we denote $\beta(y):=\beta(0,y)$, $\theta(y):=\theta(0,y)$
and consider the solution $(X,Y)$ of the following SDE with one membrane:
\ba
\label{e:XY0-}
X_t&= \sum_{l=1}^m \int_0^t \sigma_{l}^0(X_s,Y_s)\,\di W^l_s + \int_0^t b^0(X_s,Y_s)\,\di s
+\delta \int_0^t \beta(Y_s)\,\di L^0_s (X),\\
Y^{i}_t&= y^i+\sum_{l=1}^m \int_0^t \sigma^{i}_l(X_s,Y_s)\,\di W^l_s +\int_0^t b^i(X_s,Y_s)\,\di s
+ \delta\int_0^t   \theta^i(Y_s)\,\di L^0_s (X),\quad i=1,\dots,d.
\ea

We construct the transformation of $(X,Y)$ into a diffusion without the local time terms. Recall that
$\delta\in (0,  1\wedge \frac{1}{\|\beta\|_\infty})$. Let
\ba
\label{e:BB}
B(y):=\frac{1-\delta\beta(y)}{1+\delta\beta(y)},\quad y\in\bR^d.
\ea
Then,
\ba
0<\frac{1-\delta\|\beta\|_\infty}{1+\delta\|\beta\|_\infty}\leq B(y)
\leq \frac{1+\delta\|\beta\|_\infty}{1-\delta\|\beta\|_\infty}
<\infty.
\ea
Define the functions
\ba
F_-(x,y)&=x,\\
G_-(x,y) &= y+\delta x\theta(y),
\ea
and
\ba
F_+(x,y)&=x B( y),\\
 G_+(x,y) &= y-\delta x\theta(y).
\ea
It is clear that these functions map half-spaces into half-spaces as follows:
\ba
\label{e:37}
&(F_\pm,G_\pm)\colon (-\infty,0]\times\bR^d\to  (-\infty,0]\times\bR^d,\\
&(F_\pm,G_\pm)\colon [0,\infty)\times\bR^d\to  [0,\infty)\times\bR^d,\\
\ea
The Jacobi matrices $J_\pm$ of transformations $(x,y)\mapsto (F_\pm(x,y),G_\pm(x,y))$ equal to
\ba
J_-(x,y)=\begin{pmatrix}
          1 & 0\\
          \delta\theta(y) & \mathrm{Id}+ \delta x \nabla_y \theta(y)
         \end{pmatrix},\quad
J_+(x,y)=\begin{pmatrix}
          B(y) & x\nabla_y B(y)\\
          \delta\theta(y) & \mathrm{Id}- \delta x \nabla_y \theta(y)
         \end{pmatrix}.
\ea
In particular,
\ba
\label{e:jacob}
\det J_-(0,y)= 1\quad \text{and}\quad \det J_+(0,y)= B(y)\in (0,\infty),\quad y\in\bR^d.
\ea
Therefore, for any $y_0\in\bR^d$,
the functions
$(x,y)\mapsto (F_\pm(x,y),G_\pm(x,y))$ establish $C^2$-diffeomorphisms
 between a small neighbourhood  $\mathcal U(y_0)$ of the point $(0,y_0)$
 and its images $\mathcal V_\pm (y_0):=(F_\pm,G_\pm)(\mathcal U(y_0))$, respectively.

 We define the functions
\ba
F(x,y)&=F_-(x,y)\bI_{(-\infty,0)}(x)+F_+ (x,y)\bI_{[0,\infty)}(x),\\
G(x,y) &= G_-(x,y)\bI_{(-\infty,0)}(x)+G_+ (x,y)\bI_{[0,\infty)}(x).
\ea
The functions $F(x,y)$ and $G(x,y)$ are continuous in $\bR\times\bR^d$ and
are twice continuously differentiable on
$(-\infty,0)\times\bR^d $ and $(0,\infty)\times\bR^d $.

It follows from \eqref{e:37} that the function $(F,G)$
is a homeomorphism between $\mathcal U(y_0)$ and
\ba
\mathcal V(y_0):=
\Big(\mathcal V_- (y_0)\cap (-\infty,0]\times\bR^d\Big) \bigcup \Big(\mathcal V_+ (y_0)\cap  [0,\infty)\times\bR^d\Big).
\ea
Let
\ba
(\Phi,\Psi):=(F,G)^{-1}\colon \mathcal V(y_0)\to \mathcal U(y_0)
\ea
be the
inverse mapping.

For a solution $(X,Y)$ starting at $(0,y_0)$ we define the first exit time
\ba
\tau:=\inf\{t\in [0,\infty)\colon (X_t,Y_t)\notin \mathcal U(y_0)\}
\ea
and the processes
\ba
U_t&:=F (X_t, Y_t), \\
V_t&:=G (X_t, Y_t),\quad t\in [0,\tau].
\ea
Equivalently,
\ba
\label{e:PP}
X_t&=\Phi(U_t,V_t),\\
Y_t&=\Psi(U_t,V_t),\quad t\in [0,\tau].
\ea
Moreover,
\ba
\tau:=\inf\{t\in [0,\infty)\colon (U_t,V_t)\notin \mathcal V(y_0)\}.
\ea
\begin{prp}
The process $(X,Y)_{t\in[0, \tau]}$ is a (weak) solution of \eqref{e:XY0} if and only if
$(U,V)_{t\in [0,\tau]}$ is a (weak) solution of
\ba
\label{e:UV0}
U_t
&= \int_0^t   \Big[ b^0(\cdot,\cdot)+ \phi^{0}(\cdot,\cdot) \Big]\circ \Big(\Phi(U_s,V_s),\Psi(U_s,V_s)\Big) \,\di s\\
&+\sum_{l=1}^m\int_0^t   \Big[ \sigma_{l}^0 (\cdot,\cdot) + \phi_{l}^{0}(\cdot,\cdot)     \Big]
\circ \Big(\Phi(U_s,V_s),\Psi(U_s,V_s)\Big)  \,\di W^l_s,   \\
V^{i}_t&=y^i
+\int_0^t \Big[ b^i(\cdot,\cdot)
-\psi^{i}(\cdot,\cdot)\Big] \circ\Big(\Phi(U_s,V_s),\Psi(U_s,V_s)\Big)  \,\di s\\
&+\sum_{l=1}^m \int_0^t \Big[  \sigma_{l}^i(\cdot,\cdot)
 -\psi_{l}^{i}(\cdot,\cdot)  \Big] \circ \Big(\Phi(U_s,V_s),\Psi(U_s,V_s)\Big)   \,\di W^l_s, \quad i=1,\dots,d,\\
\ea
where
\ba
\phi^{0}(x,y)&=
(B(y)-1)  b^0(x,y) \bI_{(0,\infty)}
+ \sum_{i=1}^d B_{y^i}(y) \Big(  x^+b^i(x,y)+   \Sigma^{0i}(x,y)\bI_{(0,\infty)}(x)\Big)
\\+ & \frac12 \sum_{i,j=1}^d x^+B_{y^iy^j}(y)\Sigma^{ij}(x,y),\\
\phi_{l}^{0}(x,y)
&=(B(y)-1)\bI_{(0,\infty)}  \sigma_{l}^0(x,y) + x^+ \sum_{i=1}^d B_{y^i}(y) \sigma_{l}^i(x,y),\\
\psi^{i}(x,y)
&= \delta\Big( \sum_{j=1}^d \theta_{y^j}^i(y)\big(|x|b^j(x,y)+\Sigma^{0j}(x,y)\big)+\frac{|x|}{2}\sum_{j,k=1}^d \theta^i_{y^jy^k}(y)\Sigma^{jk}(x,y)+\theta^i(y)b^0(x,y)\sgn(x)\Big),\\
\psi_{l}^{i}(x,y)&=\delta\Big(|x|\sum_{j=1}^d \theta^i_{y^j}(y)\sigma_{l}^j(x,y)+\theta^i(y)\sigma_l^0(x,y)\sgn(x)\Big),\quad i=1,\dots,d,\ l=1,\dots,m.
\ea
\end{prp}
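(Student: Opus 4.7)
The plan is to apply the generalized Itô-Tanaka formula to the piecewise $C^2$, continuous functions $F$ and $G^i$ along the semimartingale $(X, Y)$ and to show that the resulting local-time terms cancel exactly, leaving a system of the form \eqref{e:UV0}. By construction, on each of the half-spaces $(-\infty,0]\times\bR^d$ and $[0,\infty)\times\bR^d$ the maps $F$ and $G^i$ are genuinely $C^2$ and they match continuously across $\{x=0\}$. Hence Itô's formula applies on each side, and the two expressions are joined by a local-time correction $\tfrac{1}{2}(h_x(0^+, Y_s) - h_x(0^-, Y_s))\,dL^0_s(X)$ for $h\in\{F,G^i\}$.

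For $F$ the $x$-derivative jumps from $F_x(0^-, y) = 1$ to $F_x(0^+, y) = B(y)$, producing an Itô-Tanaka correction $\tfrac12(B(Y_s)-1)\,dL^0_s$. Substituting \eqref{e:XY0-} into $F_x(X,Y)\,dX$ also brings in the singular contribution $F_x(0, Y_s)\cdot\delta\beta(Y_s)\,dL^0_s$; since the paper works with the symmetric semimartingale local time \eqref{eq:Tanaka_f}, the jumping coefficient is read with the symmetric convention $F_x(0,y) = \tfrac{1}{2}(B(y)+1)$. The term $F_y\cdot dY$ contributes nothing at $\{X=0\}$ because $F_y(0,y)=0$. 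The total $dL^0$ charge in $dU_t$ is therefore
\[
\tfrac{1}{2}\bigl[(B(Y_s)+1)\delta\beta(Y_s) + B(Y_s)-1\bigr]\,dL^0_s(X)
= \tfrac{1}{2}\bigl[B(Y_s)(1+\delta\beta(Y_s)) - (1-\delta\beta(Y_s))\bigr]\,dL^0_s(X),
\]
which vanishes by the very definition \eqref{e:BB} of $B$. An identical bookkeeping for $G^i$, using $G^i_x(0^\pm,y)=\mp\delta\theta^i(y)$ (symmetric mean zero), $G^i_y(0,y)$ equal to the $i$-th standard basis vector, and Itô-Tanaka jump $-\delta\theta^i(Y_s)\,dL^0_s$, shows that the $\delta\theta^i(Y_s)\,dL^0_s$ coming from $G^i_y\cdot dY$ exactly cancels the Itô-Tanaka correction. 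Thus both $dU_t$ and $dV^i_t$ are free of $dL^0$-terms.

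The explicit coefficients $\phi^{0}, \phi^{0}_l, \psi^{i}, \psi^{i}_l$ then follow by reading off the regular Itô terms on each half-space. Using $F_x = 1 + (B(y)-1)\bI_{(0,\infty)}(x)$, $F_y = x^+\nabla_y B$, $F_{xy^i} = B_{y^i}\bI_{(0,\infty)}(x)$, $F_{y^iy^j} = x^+B_{y^iy^j}$, and expanding $F_x b^0 + F_y\cdot b + \sum_i F_{xy^i}\Sigma^{0i} + \tfrac12\sum_{i,j}F_{y^iy^j}\Sigma^{ij}$ recovers $b^0 + \phi^0$, and the diffusion $F_x\sigma^0_l + F_y\cdot\sigma_l$ equals $\sigma^0_l + \phi^0_l$. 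The analogous computation for $G^i(x,y)=y^i - \delta|x|\theta^i(y)$, in which $|x|$ and $\sgn(x)$ appear through $G^i_x, G^i_y, G^i_{xy}, G^i_{yy}$ and through the bracket $d\langle|X|,\theta^i(Y)\rangle$, assembles the terms listed in $\psi^i$ and $\psi^i_l$. The converse implication is obtained by the same calculation applied to the inverse $(\Phi,\Psi) = (F,G)^{-1}$, which is itself a piecewise $C^2$ homeomorphism between $\mathcal V(y_0)$ and $\mathcal U(y_0)$ in view of \eqref{e:jacob}; the Itô-Tanaka jumps of $\Phi_u,\Psi_u$ at $\{u=0\}$ now \emph{produce} (rather than cancel) the local-time terms of \eqref{e:XY0-}.

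The main technical point is the consistent treatment of the jump of the first-order coefficients at $\{x=0\}$: because $dX$ and $dY$ both carry singular $dL^0_s(X)$ components, the symmetric value of any jumping coefficient must be paired with the Itô-Tanaka jump contribution produced by the same discontinuity. The algebraic identity $B(y)(1+\delta\beta(y)) = 1-\delta\beta(y)$, equivalent to \eqref{e:BB}, is precisely what forces the sum of these contributions to vanish, reflecting the classical Harrison--Shepp mechanism \cite{HShepp-81} for eliminating the local time of a skew Brownian motion.
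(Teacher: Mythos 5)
Your proposal is correct and follows essentially the same route as the paper: the paper writes $F(X,Y)=X+X^+(B(Y)-1)$ and $G(X,Y)=Y-\delta|X|\theta(Y)$ and combines the Tanaka formulas for $X^+$ and $|X|$ with the product It\^o rule, which is just a repackaging of your direct application of the Peskir-type change-of-variable formula to the piecewise $C^2$ maps $F$, $G$ (and the paper, like you, invokes Peskir's formula for the converse direction). The cancellation mechanism you identify --- the symmetric reading of the jumping first-order coefficient paired with the It\^o--Tanaka jump, reduced to the identity $B(y)(1+\delta\beta(y))=1-\delta\beta(y)$ from \eqref{e:BB} and the sign flip of $\theta$ across $\{x=0\}$ --- is exactly the paper's argument.
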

\begin{proof}
Recall the Tanaka formulas:
\ba
X_t^+&=\int_0^t \Big(\bI(X_s>0)+ \frac12 \bI(X_s=0)\Big) \,\di X_s + \frac12 L^0_t(X),\\
|X_t|&=\int_0^t \sgn X_s \,\di X^\delta_s + L^0_t(X).
\ea
Hence the application of
 the It\^o formula to $B(Y)-1$,   $\theta(Y)$ and the product It\^o formula yields  that
 \ba
\label{e:UVXY}
U_t
&= \int_0^t   \Big(  b^0+ \phi^{0}\Big)(X_s,Y_s) \,\di s
+\sum_{l=1}^m\int_0^t   \Big( \sigma_{l}^0  + \phi_{l}^{0}    \Big)(X_s,Y_s)\,\di W^l_s,   \\
V^{i}_t&=y^i
+\int_0^t \Big( b^i -\psi^{i}\Big)(X_s,Y_s) \,\di s
+\sum_{l=1}^m \int_0^t \Big(  \sigma_{l}^i -\psi_{l}^{i}  \Big)(X_s,Y_s)\,\di W^l_s, \quad i=1,\dots,d,\\
\ea
and the representation \eqref{e:UV0} follows immediately.
To transform the system \eqref{e:UV0} into \eqref{e:XY0}, we apply the It\^o formula with local times as proven by Peskir
\cite{peskir2007change}.
\end{proof}


Since $(F_\pm$, $G_\pm)$ have non-degenerate Jacobians in the neighbourhood $\mathcal U(y_0)$ of the initial point, see \eqref{e:jacob},
Assumption \textbf{A}$_{\Sigma}^\text{elliptic}$ implies the uniform ellipticity of the
diffusion matrix of $(U,V)$ in the neighbourhood $\mathcal V(y_0)$.
Therefore, uniqueness of $(U,V)$ on $[0,\tau]$ follows from Gao \cite[p.\ 139]{gao1993martingale}.
Since $(F,G)$ is a bijection, we obtain uniqueness of $(X,Y)$ on $[0,\tau]$, what finishes the proof of the Theorem.

\section{Dynamics with many membranes. Limit theorem for scaling of  local times sums\label{s:many}}

The main result of this section is Theorem \ref{thm:conv_CAF} that provides, in particular, a
functional limit theorem for sums of local times if the processes $\{X^{\e,\delta}\}$
in the limit as the distance between the membranes converges to 0.

\begin{thm}
\label{thm:conv_CAF}
Assume that conditions of Theorem \ref{t:A} are satisfied, and assume that for each $\e,\delta\in(0,1]$
$(X^{\e, \delta }, Y^{\e, \delta }, W^{\e, \delta },(\rF_t^{\e,\delta}))$ is a weak solution of \eqref{e:XY}.
Assume that
\ba
(X^{\e, \delta }_t, Y^{\e, \delta }_t, W^{\e, \delta }_t)_{t\in[0,\infty)}\Rightarrow (X_t,Y_t,W_t)_{t\in[0,\infty)},\quad \e,\delta \to 0,
\ea
in the space of continuous functions.
Let $\{f^{\e,  \delta},g^{\e,  \delta},h^{\e,  \delta} \}_{\e,\delta\in(0,1]}$ be a
family of bounded measurable vector- or matrix-valued functions
defined on $\bR_+\times\bR\times \bR^d$, such that they
locally uniformly converge to continuous functions $f, g,h$, respectively, as $\e,\delta\to 0$.
Then, the sequence
\ba
\Big(X^{\e,\delta}_t,
Y^{\e,\delta}_t,
\int_0^t f^{\e,  \delta}(s,X_s^{\e,\delta},Y_s^{\e,  \delta })\,\di s,
\int_0^t g^{\e,  \delta}(s,X_s^{\e,\delta},Y_s^{\e,  \delta })\,\di W^{\e,  \delta }_s,
\e \sum_{k=-\infty}^\infty \int_0^t h^{\e,  \delta}(s,a_k^\e,Y_s^{\e,  \delta })\,\di L^{a_k^\e}_s (X^{\e,\delta})
\Big)_{t\in[0,\infty)}
\ea
weakly converges in the space of continuous functions to
\ba
\Big(X_t,
Y_t,
\int_0^t f(s,X_s,Y_s)\,\di s,
\int_0^t g(s,X_s,Y_s)\,\di W_s,
\int_0^t h(s,X_s,Y_s) \frac{\Sigma^{00}(s,X_s, Y_s)}{d(X_s)}\,\di s
\Big)_{t\in[0,\infty)}
\ea
as $\e,\delta\to 0$.
In particular,
\ba
\Big(
\e \sum_{k=-\infty}^\infty L^{a_k^{\e }}_t (X^{\e, \delta })\Big)_{t\in[0,\infty)}
\Rightarrow \Big(\int_0^t \frac{\Sigma^{00}(s,X_s, Y_s)}{d(X_s)}\, \di s\Big)_{t\in[0,\infty)},\quad \e,\delta \to 0,
\ea
in $C(\bR_+,\bR)$.
\end{thm}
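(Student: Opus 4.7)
The plan is to combine a Skorokhod coupling with the extended occupation times formula, using a Riemann-sum approximation to pass between the discrete local-time sum and a continuous integral. After invoking Skorokhod's representation theorem we may assume that $(X^{\e,\delta},Y^{\e,\delta},W^{\e,\delta})\to (X,Y,W)$ locally uniformly almost surely on a single probability space. The convergence of $\int_0^\cdot f^{\e,\delta}(s,X_s^{\e,\delta},Y_s^{\e,\delta})\,\di s$ is then immediate by the local uniform convergence $f^{\e,\delta}\to f$, continuity of $f$ and dominated convergence (all integrands are uniformly bounded). Convergence of the stochastic integral follows from the stability theorem of Kurtz--Protter \cite{KurtzP-91}: the integrands $g^{\e,\delta}(\cdot,X^{\e,\delta},Y^{\e,\delta})$ converge uniformly on compacts in probability to $g(\cdot,X,Y)$, and the driving Brownian motions converge jointly.

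For the local-time term, the key identity is the extended occupation times formula
\ba
\int_0^t \phi(s,X_s^{\e,\delta},Y_s^{\e,\delta})\,\di\langle X^{\e,\delta}\rangle_s
=\int_{\bR}\int_0^t \phi(s,a,Y_s^{\e,\delta})\,\di L^a_s(X^{\e,\delta})\,\di a,
\ea
applied with the piecewise-constant surrogate
\ba
\phi^\e(s,a,y)=\sum_{k\in\bZ}\frac{h^{\e,\delta}(s,a_k^\e,y)}{d(a_k^\e)}\,\bI_{[a_k^\e,a_{k+1}^\e)}(a).
\ea
The left-hand side equals $\int_0^t \phi^\e(s,X_s^{\e,\delta},Y_s^{\e,\delta})\Sigma^{00}(s,X_s^{\e,\delta},Y_s^{\e,\delta})\,\di s$, which converges a.s.\ to $\int_0^t h(s,X_s,Y_s)\Sigma^{00}(s,X_s,Y_s)/d(X_s)\,\di s$ thanks to the local uniform convergence of $h^{\e,\delta}$ and the continuity of $h,d,\Sigma^{00}$.

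The task then reduces to showing that the right-hand side is close to $\e\sum_k\int_0^t h^{\e,\delta}(s,a_k^\e,Y_s^{\e,\delta})\,\di L^{a_k^\e}_s(X^{\e,\delta})$. The discrepancy splits into two terms. The first arises from replacing $\e$ by $(a_{k+1}^\e-a_k^\e)/d(a_k^\e)$; it is controlled uniformly in $k$ by $\textbf{A}_d^{\mathrm{Lip}_b}$ and the moment bound $\E\bigl[\e\sum_k L^{a_k^\e}_t(X^{\e,\delta})\bigr]\le C(t)$, which itself follows from the occupation times formula applied with $\phi\equiv 1/d$ and the uniform bound on $\Sigma^{00}$. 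The second comes from replacing the integral $\int_{a_k^\e}^{a_{k+1}^\e}(\cdots)\di L^a_s\,\di a$ by $(a_{k+1}^\e-a_k^\e)\int (\cdots)\di L^{a_k^\e}_s$, and requires quantitative continuity of $a\mapsto L^a(X^{\e,\delta})$. This is obtained by subtracting the Tanaka formulas at two levels $a,a'$ and applying the BDG inequality, yielding $\E|L^a_t(X^{\e,\delta})-L^{a'}_t(X^{\e,\delta})|^2=\mathcal O(|a-a'|)$ with a constant independent of $\e,\delta$ thanks to $\textbf{A}_{\Sigma^{00}}$ and the uniform boundedness of the coefficients.

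The main obstacle is exactly this uniform modulus of continuity for the local times combined with the summation over $k\in\bZ$. The countable sum is tamed by observing that for each finite $T$, the random range $\{X_s^{\e,\delta}:s\in[0,T]\}$ is a.s.\ bounded (tight as $\e,\delta\to 0$), so outside a large interval $[-R,R]$ the contribution vanishes with high probability and the effective sum has $\mathcal O(\e^{-1})$ terms. Combined with the $\mathcal O(\e^{1/2})$ modulus from the Tanaka argument and the spacing $a_{k+1}^\e-a_k^\e=\mathcal O(\e)$, the total error is $\mathcal O(\e^{1/2})\to 0$. Joint weak convergence of the five-tuple then follows because we have identified the a.s.\ limits of every component on the Skorokhod space; the special case $h\equiv 1$ gives the announced convergence of $\e\sum_k L^{a_k^\e}_\cdot(X^{\e,\delta})$.
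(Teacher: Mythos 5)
Your proposal takes a genuinely different route from the paper. The paper proves this theorem via a law-of-large-numbers over excursions between successive membrane crossings (Lemmas \ref{l:loctime}--\ref{e:tot}), built on the one-membrane estimates of Section~2, and then closes with the deterministic Lemma \ref{lem:conv_integrals_ordinary}. You instead invoke the time-dependent occupation times formula and try to pass from the discrete membrane sum to the continuous integral by a Riemann-sum argument in the level variable $a$. That is an attractive idea, but as written it has a real gap at the comparison step.

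The gap is in the bound on
\begin{equation*}
\int_{a_k^\e}^{a_{k+1}^\e}\Big[\int_0^t h^{\e,\delta}(s,a_k^\e,Y_s^{\e,\delta})\,\di\bigl(L^a_s(X^{\e,\delta})-L^{a_k^\e}_s(X^{\e,\delta})\bigr)\Big]\,\di a,
\end{equation*}
which you claim is controlled by the modulus $\E|L^a_t-L^{a_k^\e}_t|^2=\mathcal O(|a-a_k^\e|)$. But $\di L^a$ and $\di L^{a_k^\e}$ are mutually singular measures, supported on the disjoint sets $\{X_s=a\}$ and $\{X_s=a_k^\e\}$, so the total variation of $L^a-L^{a_k^\e}$ on $[0,t]$ is $L^a_t+L^{a_k^\e}_t$, not $|L^a_t-L^{a_k^\e}_t|$. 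The crude estimate $|\int h\,\di g|\le\|h\|_\infty\,\mathrm{Var}(g)$ is therefore useless here, and a small sup-norm of $s\mapsto L^a_s-L^{a_k^\e}_s$ does not by itself control the difference of the two Stieltjes integrals when $h$ is merely continuous in $s$ (in particular not of bounded variation). A correct version of your step needs additional work: freeze $s\mapsto h^{\e,\delta}(s,a_k^\e,Y_s^{\e,\delta})$ on a partition of mesh $\eta$, Abel-sum so the estimate reduces to $\sup_{s\le t}|L^a_s-L^{a_k^\e}_s|$, and send $\e\to 0$ before $\eta\to 0$. Without that, the power-counting leading to your ``$\mathcal O(\e^{1/2})$'' conclusion is not justified.

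There is a second, smaller gap. The uniform (in $\e,\delta$) bound $\E\bigl[\e\sum_k L^{a_k^\e}_t(X^{\e,\delta})\bigr]\le C(t)$ does \emph{not} follow from the occupation formula with $\phi\equiv 1/d$: that formula yields $\int_{\bR}L^a_t/d(a)\,\di a=\int_0^t\Sigma^{00}/d\,\di s$, i.e.\ an integral in $a$, not the discrete sum $\e\sum_k L^{a_k^\e}_t$. Passing from the integral to the Riemann sum is exactly the step under examination and requires the very modulus of $a\mapsto L^a_\cdot$ you are in the process of establishing, which in turn relies on this moment bound; your argument is circular at this point. Moreover, the Tanaka computation for $\E|L^a_t-L^{a'}_t|^2$ must handle the local-time drift $\delta\sum_k\int\beta\,\di L^{a_k^\e}$ inside $\di X^{\e,\delta}$, producing a term of order $(\delta/\e)\cdot\E\,\e\sum_k L^{a_k^\e}_t$, so the modulus constant is not ``independent of $\e,\delta$'' merely by $\mathbf{A}_{\Sigma^{00}}$ and boundedness of coefficients. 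The paper supplies the needed a priori control through the one-membrane excursion estimates of Section~2 (Lemma \ref{cor:estimate_hittings}) and the martingale argument of Lemma \ref{l:loctime}; your proposal needs a substitute for them, not just a reference to the occupation formula.
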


The proof of Theorem \ref{thm:conv_CAF} will be presented at the end of this section.
To prepare for it, we establish several auxiliary results showing that the asymptotics
of sums of local times coincide with the asymptotics of the sums of their conditional expectations.

The process $(X^{\e,\delta}, Y^{\e,\delta},W^{\e,\delta},(\rF_t^{\e,\delta}))$
given, we define the sequence of stopping times $(\tau^{\e,\delta}_j)_{j\in \mathbb N_0}$ as follows:
\ba
\tau^{\e,\delta}_0&=\inf\Big\{t\geq 0\colon X^{\e,\delta}_t\in \{a_k^\e\}_{k\in\mathbb Z}\Big\},\\
\tau^{\e,\delta}_{j+1}&=\inf\Big\{t>\tau^{\e,\delta}_{j} \colon X^{\e,\delta}_t\in \{a_k^\e\}_{k\in\mathbb Z}
\backslash \{ X^{\e,\delta}_{\tau^{\e,\delta}_j}\}
\Big\},\quad j\in \mathbb N_0.
\ea
Moreover, we define a sequence of random indices $k^{\e,\delta}=(k_j^{\e,\delta})_{j\in\mathbb N_0}$ such that
\ba
a^\e_{k_j^{\e,\delta}}=X^{\e,\delta}_{\tau^{\e,\delta}_{j }},\quad j\in\mathbb N_0.
\ea
The process $X^{\e,\delta}$ subsequently visits the set of membranes $\{a_k^\e\}_{k\in\mathbb Z}$, and
$\tau^{\e,\delta}_j$ is the time instant of the $j$-th changeover. Furthermore, for all $j\in\mathbb N_0$
\ba
|k_{j+1}^{\e,\delta}-k_{j}^{\e,\delta}|=1.
\ea

By estimate \eqref{e:tau1} in Lemma \ref{cor:estimate_hittings}, the stopping times $\tau^{\e,\delta}_j$,
$j\in \mathbb N_0$, are finite
and integrable.

\begin{lem}
\label{l:loctime}
Let Assumptions \emph{\textbf{A}}$_\mathrm{coeff}^{C_b}$, \emph{\textbf{A}}$^\mathrm{sep}_a$, \emph{\textbf{A}}$_{\Sigma^{00}}$ hold.
Let for any $\e,\delta\in(0,1]$,
$(\xi^{\e,\delta}_j)_{j\in\mathbb N_0}$ be a sequence of random variables
adapted to the filtration  $(\rF^{\e,\delta}_{\tau^{\e,\delta}_j})_{j\in\mathbb N_0}$.
Assume that there is $C\in(0,\infty)$ such that
$\P(|\xi^{\e,\delta}_j|\leq C)=1$ for all $j\in\mathbb N_0$ and all $\e,\delta\in (0,1]$.
Then, for any $N\in(0,\infty)$
\ba
\label{e:sumL}
&\max_{0\leq n\leq {N\e^{-2}} }
 \e \Bigg| \sum_{j=0}^n \sum_{k=-\infty}^\infty\xi^{\e,\delta}_j
 \Big(L^{a_k^\e}_{\tau^{\e,\delta}_{j+1}}  (X^{\e,\delta})
 -L^{a^\e_k}_{\tau^{\e,\delta}_{j }} (X^{\e,\delta})
 -\E\Big[L^{a_k^\e}_{\tau^{\e,\delta}_{j+1}}  (X^{\e,\delta})
 -L^{a_k^\e}_{\tau^{\e,\delta}_{j }}  (X^{\e,\delta}) \Big| \rF^{\e,\delta}_{\tau^{\e,\delta}_{j }} \Big] \Big)\Bigg|
        \overset{\P}{\to} 0,\  \e\to0,
\ea
and
\ba
\label{e:sumtau}
&\max_{0\leq n\leq { N\e^{-2}} }
 \sum_{j=0}^n \Big(  {\tau^{\e,\delta}_{j+1}} - {\tau^{\e,\delta}_{j }}
 -\E\Big[{\tau^{\e,\delta}_{j+1}} - {\tau^{\e,\delta}_{j }} \Big| \rF^{\e,\delta}_{\tau^{\e,\delta}_{j }} \Big]\Big)
        \overset{\P}{\to} 0, \quad \e\to 0.
\ea
\end{lem}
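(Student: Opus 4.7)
The plan is to interpret both sums as discrete-time martingales with respect to the filtration $(\rF^{\e,\delta}_{\tau^{\e,\delta}_j})_{j\in\mathbb N_0}$ and to bound them via Doob's maximal $L^2$-inequality, using the second-moment estimates supplied by Lemma \ref{cor:estimate_hittings}. The first step is a reduction of the $k$-sum: on each random interval $(\tau^{\e,\delta}_j, \tau^{\e,\delta}_{j+1}]$ the process $X^{\e,\delta}$ stays, by continuity and by the definition of $\tau^{\e,\delta}_{j+1}$, inside the strip $(a^\e_{k_j^{\e,\delta}-1}, a^\e_{k_j^{\e,\delta}+1})$. For $k\notin\{k_j^{\e,\delta}-1,\,k_j^{\e,\delta},\,k_j^{\e,\delta}+1\}$ the local time $L^{a_k^\e}(X^{\e,\delta})$ does not grow on $[\tau^{\e,\delta}_j,\tau^{\e,\delta}_{j+1}]$, and for $k=k_j^{\e,\delta}\pm 1$ the increment vanishes at $\tau^{\e,\delta}_{j+1}$ by continuity of $t\mapsto L^{a_k^\e}_t(X^{\e,\delta})$ at the first-hitting time of $a_k^\e$. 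Hence the infinite $k$-sum in \eqref{e:sumL} collapses to the single term $k=k_j^{\e,\delta}$, so both sums reduce to centred martingale-difference sums in $j$.

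Second, writing $\Delta L_j:=L^{a^\e_{k_j^{\e,\delta}}}_{\tau^{\e,\delta}_{j+1}}(X^{\e,\delta})-L^{a^\e_{k_j^{\e,\delta}}}_{\tau^{\e,\delta}_j}(X^{\e,\delta})$ and $\Delta\tau_j:=\tau^{\e,\delta}_{j+1}-\tau^{\e,\delta}_j$, I would establish the uniform conditional second-moment estimates
\begin{equation*}
\E\bigl[(\Delta L_j)^2\,\bigl|\,\rF^{\e,\delta}_{\tau^{\e,\delta}_j}\bigr]\le A_2\e^2,\qquad
\E\bigl[(\Delta\tau_j)^2\,\bigl|\,\rF^{\e,\delta}_{\tau^{\e,\delta}_j}\bigr]\le A_2\e^4\quad\text{a.s.},
\end{equation*}
with a constant $A_2$ independent of $\e,\delta,j$, which follow from \eqref{e:L1} and \eqref{e:tau1} of Lemma \ref{cor:estimate_hittings} applied after restarting the SDE at $\tau^{\e,\delta}_j$; Assumption $\boldsymbol{\mathrm A}^\mathrm{sep}_a$ ensures that the relevant rescaled strip widths $a_\pm$ lie in $[C^{-1},C]$. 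Doob's $L^2$ inequality, orthogonality of martingale differences and $|\xi^{\e,\delta}_j|\le C$ then yield, with $n_*=\lfloor N\e^{-2}\rfloor+1$,
\begin{equation*}
\E\Big[\max_{n\le n_*}\Big|\e\sum_{j=0}^n\xi^{\e,\delta}_j\bigl(\Delta L_j-\E[\Delta L_j\,|\,\rF^{\e,\delta}_{\tau^{\e,\delta}_j}]\bigr)\Big|^2\Big]\le 4C^2\e^2\sum_{j=0}^{n_*}\E[(\Delta L_j)^2]\le 4C^2A_2\,N\e^2,
\end{equation*}
and analogously
\begin{equation*}
\E\Big[\max_{n\le n_*}\Big|\sum_{j=0}^n\bigl(\Delta\tau_j-\E[\Delta\tau_j\,|\,\rF^{\e,\delta}_{\tau^{\e,\delta}_j}]\bigr)\Big|^2\Big]\le 4\sum_{j=0}^{n_*}\E[(\Delta\tau_j)^2]\le 4A_2\,N\e^2.
\end{equation*}
Both right-hand sides vanish as $\e\to 0$, and Chebyshev's inequality delivers \eqref{e:sumL} and \eqref{e:sumtau}.

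The main technical obstacle lies in the second step, namely justifying the uniform conditional moment bounds after restarting at the random times $\tau^{\e,\delta}_j$. Since strong Markovianity of the weak solution is not assumed here (Theorem \ref{t:existU}.2 requires extra regularity that is absent under the hypotheses of Theorem \ref{t:A}), one cannot directly invoke Lemma \ref{cor:estimate_hittings} on the shifted process. The conditional bounds must instead be obtained by viewing $(X^{\e,\delta}_{\tau^{\e,\delta}_j+\cdot},Y^{\e,\delta}_{\tau^{\e,\delta}_j+\cdot})$ as a weak solution of the same SDE with initial condition $(a^\e_{k_j^{\e,\delta}},Y^{\e,\delta}_{\tau^{\e,\delta}_j})$, driven by the time-shifted Brownian motion which, by the strong Markov property of Brownian motion, is independent of $\rF^{\e,\delta}_{\tau^{\e,\delta}_j}$. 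Since the Lyapunov and Tanaka arguments in the proof of Lemma \ref{cor:estimate_hittings} rely only on the uniform bounds on the coefficients and on $\Sigma^{00}\ge C^{-1}$, they carry over pathwise to the shifted SDE and yield the required conditional estimates uniformly in $j$.
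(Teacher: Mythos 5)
Your proof is correct and follows essentially the same route as the paper: collapse the $k$-sum to the single active membrane $a^\e_{k_j^{\e,\delta}}$, recognize the centred sums as martingales in $j$ with respect to $(\rF^{\e,\delta}_{\tau^{\e,\delta}_j})$, and control the maximum via Doob's $L^2$ inequality combined with the conditional second-moment bounds \eqref{e:L1} and \eqref{e:tau1} of Lemma \ref{cor:estimate_hittings}, which give an overall bound of order $\e^2\cdot N\e^{-2}\cdot\e^2\to 0$. The only difference is that you make explicit two points the paper leaves implicit — the justification of the collapse of the infinite $k$-sum and the restart argument showing that the moment estimates of Lemma \ref{cor:estimate_hittings} hold conditionally on $\rF^{\e,\delta}_{\tau^{\e,\delta}_j}$ — and both are handled correctly.
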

\begin{proof}
First we prove the limit \eqref{e:sumL}.
Note that
\ba
\label{e:diffL}
\sum_{k=-\infty}^\infty
 \Big(L^{a_k^\e}_{\tau^{\e,\delta}_{j+1}}  (X^{\e,\delta})
 -L^{a^\e_k}_{\tau^{\e,\delta}_{j }} (X^{\e,\delta})\Big)
 = L^{a^\e_{k_j^{\e,\delta}} }_{\tau^{\e,\delta}_{j+1}}  (X^{\e,\delta})
 -L^{a^\e_{k_j^{\e,\delta}} }_{\tau^{\e,\delta}_{j }} (X^{\e,\delta}).
\ea
The integrability of the right-hand-side in \eqref{e:diffL} follows from Lemma \ref{cor:estimate_hittings}.
The sequence
\ba
 \left(\e  \sum_{j=0}^n \xi_j^{\e,\delta}\Big(L^{a^\e_{k_j^{\e,\delta}}}_{\tau^{\e,\delta}_{j+1}}  (X^{\e,\delta})
 -L^{a^\e_{k_j^{\e,\delta}}}_{\tau^{\e,\delta}_{j }} (X^{\e,\delta})
 -\E\Big[L^{a^\e_{k_j^{\e,\delta}}}_{\tau^{\e,\delta}_{j+1}}  (X^{\e,\delta})
 -L^{a^\e_{k_j^{\e,\delta}}}_{\tau^{\e,\delta}_{j }}  (X^{\e,\delta})\Big| \rF^{\e,\delta}_{\tau^{\e,\delta}_{j }} \Big]
 \Big)\right)_{n\in\mathbb N_0}
\ea
is a martingale. So, to prove the Lemma it suffices to verify that
\ba
\lim_{\e\to 0}\e^2
\sum_{0\leq j\leq N\e^{-2}}
\E \Big[(\xi^{\e,\delta}_j)^2\Big(L^{a^\e_{k_j^{\e,\delta}}}_{\tau^{\e,\delta}_{j+1}}  (X^{\e,\delta})
-L^{a^\e_{k_j^{\e,\delta}}}_{\tau^{\e,\delta}_{j }} (X^{\e,\delta})\Big)^2\Big]=0.
\ea
This estimate follows from the estimate \eqref{e:L1} of Lemma \ref{cor:estimate_hittings}.

The proof of \eqref{e:sumtau} follows analogously from the
estimate \eqref{e:tau1} of Lemma \ref{cor:estimate_hittings}.
\end{proof}

\begin{lem}
\label{l:Eloctime}
Let assumptions \emph{\textbf{A}}$_\mathrm{coeff}^{C_b}$, \emph{\textbf{A}}$^\mathrm{sep}_a$, \emph{\textbf{A}}$_{\Sigma^{00}}$ hold.
Then, for any $N\in(0,\infty)$, the following limits hold as $\e,\delta\to 0$:
\begin{align}
\label{e:LF}
& \max_{0\leq j\leq {N\e^{-2}} } \e \E\Big[L^{a^\e_{k_j^{\e,\delta}}}_{\tau^{\e,\delta}_{j+1}}  (X^{\e,\delta})
 -L^{a^\e_{k_j^{\e,\delta}}}_{\tau^{\e,\delta}_{j }}  (X^{\e,\delta}) \Big|\rF^{\e,\delta}_{\tau^{\e,\delta}_{j }} \Big]
        \overset{\P}{\to} 0,\\
\label{e:L}
&\max_{0\leq j\leq {N\e^{-2}} }
 \e \Big( L^{a^\e_{k_j^{\e,\delta}}}_{\tau^{\e,\delta}_{j+1}}  (X^{\e,\delta})
 -L^{a^\e_{k_j^{\e,\delta}}}_{\tau^{\e,\delta}_{j }} (X^{\e,\delta}) \Big) \overset{\P}{\to} 0,\\
\label{e:tauF}
&\max_{0\leq j\leq N\e^{-2}}
\E \Big[ \tau^{\e,\delta}_{j+1}-\tau^{\e,\delta}_{j}\Big| \rF^{\e,\delta}_{\tau^{\e,\delta}_{j }} \Big] \overset{\P}{\to} 0,\\
\label{e:tau}
&  \max_{0\leq j\leq N\e^{-2}} ( \tau^{\e,\delta}_{j+1}-\tau^{\e,\delta}_{j}) \overset{\P}{\to} 0.
\end{align}

\end{lem}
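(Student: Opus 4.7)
The plan is to derive all four statements from the single-excursion estimates of Lemma \ref{cor:estimate_hittings} applied at each stopping time $\tau_j^{\e,\delta}$. The key observation is that on $[\tau_j^{\e,\delta},\tau_{j+1}^{\e,\delta}]$ the process $X^{\e,\delta}$ stays in the strip $(a_{k_j^{\e,\delta}-1}^\e, a_{k_j^{\e,\delta}+1}^\e)$ and therefore picks up local time only at the single membrane $a_{k_j^{\e,\delta}}^\e$; conditionally on $\rF_{\tau_j^{\e,\delta}}^{\e,\delta}$ the shifted pair is thus a weak solution of an SDE of the single-membrane form \eqref{e:XY_1membrane} starting from a membrane point. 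Under Assumption $\boldsymbol{\mathrm A}^\mathrm{sep}_a$ the rescaled half-widths $a_\pm := (a_{k_j^{\e,\delta}\pm 1}^\e - a_{k_j^{\e,\delta}}^\e)/\e$ lie uniformly in $[C^{-1},C]$, so the constants $A_k$ produced by Lemma \ref{cor:estimate_hittings} are uniform in $j$, $\e$ and $\delta$.

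The $k=1$ bounds \eqref{e:L1} and \eqref{e:tau1} applied conditionally at $\tau_j^{\e,\delta}$ then yield, almost surely and uniformly in $j$,
\ba
\E\big[L^{a_{k_j^{\e,\delta}}^\e}_{\tau_{j+1}^{\e,\delta}}(X^{\e,\delta}) - L^{a_{k_j^{\e,\delta}}^\e}_{\tau_{j}^{\e,\delta}}(X^{\e,\delta}) \,\big|\, \rF^{\e,\delta}_{\tau_j^{\e,\delta}}\big] \leq A_1 \e, \quad
\E\big[\tau_{j+1}^{\e,\delta} - \tau_j^{\e,\delta} \,\big|\, \rF^{\e,\delta}_{\tau_j^{\e,\delta}}\big] \leq A_1 \e^2.
\ea
The statements \eqref{e:LF} and \eqref{e:tauF} follow immediately: the respective maxima are bounded deterministically by $A_1 \e^2 \to 0$, hence they converge to zero even in the almost sure sense.

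For the max-type statements \eqref{e:L} and \eqref{e:tau}, which involve raw random variables rather than their conditional expectations, I would use the $k=2$ versions of \eqref{e:L1}--\eqref{e:tau1} together with Markov's inequality and a union bound over the at most $\lfloor N\e^{-2}\rfloor + 1$ increments. For any $\eta>0$,
\ba
&\P\Big(\max_{0\leq j \leq N\e^{-2}} \e\big(L^{a_{k_j^{\e,\delta}}^\e}_{\tau_{j+1}^{\e,\delta}}(X^{\e,\delta}) - L^{a_{k_j^{\e,\delta}}^\e}_{\tau_{j}^{\e,\delta}}(X^{\e,\delta})\big) > \eta\Big) \\
&\qquad \leq (\lfloor N\e^{-2}\rfloor+1) \cdot \frac{\e^2 A_2 \e^2}{\eta^2} = O(\e^2),
\ea
and the companion bound $\E\big[(\tau_{j+1}^{\e,\delta} - \tau_j^{\e,\delta})^2 \mid \rF_{\tau_j^{\e,\delta}}^{\e,\delta}\big] \leq A_2 \e^4$ gives \eqref{e:tau} by an identical computation. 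The only point that is not entirely automatic, and hence the step that needs the most care, is the conditional application of Lemma \ref{cor:estimate_hittings} at the stopping time $\tau_j^{\e,\delta}$: one must check that the time-shifted coefficients still satisfy the same uniform bounds and that, under the regular conditional law given $\rF_{\tau_j^{\e,\delta}}^{\e,\delta}$, the shifted process is indeed a weak solution of an SDE of the single-membrane type \eqref{e:XY_1membrane}, so that the constants $A_k$ may be taken identical to those for a solution started at time $0$.
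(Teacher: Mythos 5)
Your proof is correct, and for \eqref{e:LF} and \eqref{e:tauF} it takes exactly the paper's route: the $k=1$ conditional moment estimates \eqref{e:L1} and \eqref{e:tau1} give a.s.\ bounds of order $\e^2$ for each individual term, hence the maxima tend to zero even almost surely. For \eqref{e:L} and \eqref{e:tau}, however, you take a genuinely different path. The paper writes each increment as (increment $-$ conditional expectation) $+$ conditional expectation, controls the second piece by \eqref{e:LF}/\eqref{e:tauF}, and controls the max of the first piece by reading off individual increments from the maximum of the martingale partial sums in Lemma~\ref{l:loctime} (which is itself a Doob-type maximal inequality argument with the $k=2$ bounds feeding the quadratic variation). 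You instead skip the martingale decomposition entirely: you apply Chebyshev's inequality to each increment using the conditional $k=2$ moment bound $\E[(\Delta L_j)^2\,|\,\rF_{\tau_j}]\leq A_2\e^2$ (resp.\ $\E[(\Delta\tau_j)^2\,|\,\rF_{\tau_j}]\leq A_2\e^4$) and then take a union bound over the $O(\e^{-2})$ indices, giving $\P(\max_j\cdots>\eta)=O(\e^2)$. Both routes are correct. Your route is more elementary and self-contained for this particular lemma; the paper's route has the advantage that Lemma~\ref{l:loctime} is needed elsewhere (e.g.\ in Lemma~\ref{l:55}), so the martingale machinery is not built only for this purpose. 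You also correctly flag the one genuine subtlety that the paper leaves implicit, namely that Lemma~\ref{cor:estimate_hittings} must be applied at the random times $\tau_j^{\e,\delta}$ rather than at $t=0$; this is justified because the proof of that lemma (the Lyapunov-function It\^o argument) produces constants $A_k$ depending only on the constant $C$ in \eqref{e:aC} and \eqref{eq:ass_bounds_coeff_SDE}, which are uniform in time under Assumptions $\boldsymbol{\mathrm A}_{b,\sigma,\beta,\theta}^{C_b}$, $\boldsymbol{\mathrm A}^\mathrm{sep}_a$, $\boldsymbol{\mathrm A}_{\Sigma^{00}}$, so the same bound holds conditionally on $\rF^{\e,\delta}_{\tau_j^{\e,\delta}}$ with no strong Markov property required.
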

\begin{proof}
The limits \eqref{e:LF} and \eqref{e:tauF} follow immediately from the estimates
\eqref{e:L1} and \eqref{e:tau1} of Lemma \ref{cor:estimate_hittings}.

From \eqref{e:sumL} in Lemma \ref{l:loctime} it follows that
\ba
\label{e:maxL}
& \max_{0\leq j\leq {N\e^{-2}} }
 \e \Bigg|
 L^{a^\e_{n_j^{\e,\delta}}}_{\tau^{\e,\delta}_{j+1}}  (X^{\e,\delta})
 -L^{a^\e_{n_j^{\e,\delta}}}_{\tau^{\e,\delta}_{j }}  (X^{\e,\delta})
 -\E\Big[L^{a^\e_{n_j^{\e,\delta}}}_{\tau^{\e,\delta}_{j+1}}  (X^{\e,\delta})
 -L^{a^\e_{n_j^{\e,\delta}}}_{\tau^{\e,\delta}_{j }}  (X^{\e,\delta})\Big| \rF^{\e,\delta}_{\tau^{\e,\delta}_{j }} \Big]
 \Bigg|\\
&=\max_{0\leq j\leq {N\e^{-2}} }
 \e \Bigg| \sum_{k=-\infty}^\infty
 \Big(L^{a_k^\e}_{\tau^{\e,\delta}_{j+1}}  (X^{\e,\delta})
 -L^{a^\e_k}_{\tau^{\e,\delta}_{j }} (X^{\e,\delta})
 -\E\Big[L^{a_k^\e}_{\tau^{\e,\delta}_{j+1}}  (X^{\e,\delta})
 -L^{a_k^\e}_{\tau^{\e,\delta}_{j }}  (X^{\e,\delta}) \Big| \rF^{\e,\delta}_{\tau^{\e,\delta}_{j }} \Big] \Big)\Bigg|\\
&        \overset{\P}{\to} 0,\  \e\to0.
\ea
Then, \eqref{e:L} follows from \eqref{e:LF} and \eqref{e:maxL},

The estimate \eqref{e:tau} is obtained analogously.
\end{proof}

The following Lemma is reformulation of Corollary \ref{cor:accurate_estim_X} for the solution $(X^{\e,\delta}, Y^{\e,\delta})$.

\begin{lem}
\label{l:estim_X}
Under the assumptions
$\mathbf{A}_\mathrm{coeff}^{C_b}$,
$\mathbf{A}_\mathrm{coeff}^\mathrm{Lip}$,
\emph{\textbf{A}}$^\mathrm{sep}_a$,
\emph{\textbf{A}}$_{\Sigma^{00}}$,
the following estimates hold as $\e,\delta\to 0$:
\begin{align}
\label{e:accL}
\E\Big[L^{ a^\e_{k_j^{\e,\delta}} }_{\tau^{\e,\delta}_{j+1}}
(X^{\e,\delta}) -L^{a^\e_{k_j^{\e,\delta}} }_{\tau^{\e,\delta}_{j }}  (X^{\e,\delta}) \Big| \rF^{\e,\delta}_{\tau^{\e,\delta}_{j }} \Big]
&  =  \frac{ 2  (a^\e_{k_j^{\e,\delta}} -a^\e_{k_j^{\e,\delta}-1})   (a^\e_{k_j^{\e,\delta}+1}-a^\e_{k_j^{\e,\delta}} )}
{ a^\e_{k_j^{\e,\delta}+1} - a^\e_{k_j^{\e,\delta}-1} }  +\mathcal O(\e^2)+ \mathcal O(\e\delta),\\
\label{e:acctau}
\E\Big[\tau^{\e,\delta}_{j+1} - \tau^{\e,\delta}_j \Big|\rF^{\e,\delta}_{\tau^{\e,\delta}_j} \Big]
&= \frac{(a^\e_{k_j^{\e,\delta}} -a^\e_{k_j^{\e,\delta}-1})   (a^\e_{k_j^{\e,\delta}+1}-a^\e_{n_j^{\e,\delta}} )}
{\Sigma^{00}( \tau^{\e,\delta}_{j }  ,X^{\e,\delta}_{\tau^{\e,\delta}_{j } }, Y^{\e,\delta}_{\tau^{\e,\delta}_{j } } )}
+\mathcal O(\e^3)+\mathcal O(\e^2\delta)\\
&= \frac{(a^\e_{k_j^{\e,\delta}} -a^\e_{k_j^{\e,\delta}-1})   (a^\e_{k_j^{\e,\delta}+1}-a^\e_{k_j^{\e,\delta}} )}
{\Sigma^{00}( \tau^{\e,\delta}_{j }  ,  a^\e_{k_j^{\e,\delta}}  , Y^{\e,\delta}_{\tau^{\e,\delta}_{j } } )}
+\mathcal O(\e^3)+\mathcal O(\e^2\delta) ,\\
\label{e:accX}
\E\Big[X^{\e,\delta}_{\tau^{\e,\delta}_{j+1}} - X^{\e,\delta}_{\tau^{\e,\delta}_j} \Big|\rF^{\e,\delta}_{\tau^{\e,\delta}_j} \Big]
& =
 \frac{b^0( \tau^{\e,\delta}_{j }  ,a^\e_{k_j^{\e,\delta}} , Y^{\e,\delta}_{\tau^{\e,\delta}_{j } } )}{\Sigma^{00}( \tau^{\e,\delta}_{j }  ,a^\e_{k_j^{\e,\delta}} , Y^{\e,\delta}_{\tau^{\e,\delta}_{j } } )}
 (a^\e_{k_j^{\e,\delta}} -a^\e_{k_j^{\e,\delta}-1})   (a^\e_{k_j^{\e,\delta}+1}-a^\e_{k_j^{\e,\delta}} ) \\
& +\beta( \tau^{\e,\delta}_{j }  ,a^\e_{k_j^{\e,\delta}} , Y^{\e,\delta}_{\tau^{\e,\delta}_{j } } )\frac{2(a^\e_{k_j^{\e,\delta}} -a^\e_{k_j^{\e,\delta}-1})   (a^\e_{k_j^{\e,\delta}+1}-a^\e_{k_j^{\e,\delta}} )}
{a^\e_{k_j^{\e,\delta}+1} - a^\e_{k_j^{\e,\delta}-1} } \delta
+\mathcal O(\e^2)+\mathcal O(\e\delta),
\end{align}
where all error terms $\mathcal O=\mathcal O_{\omega,j,\e,\delta}$ are such that
\ba
\operatorname*{ess\,sup}_{\omega}\sup_{j,\e,\delta}
\Big( \frac{|\mathcal O(\e^2)|}{\e^2}
+ \frac{|\mathcal O(\e^3)|}{\e^3}
+ \frac{|\mathcal O(\e\delta)|}{\e\delta}
+ \frac{|\mathcal O(\e^2\delta)|}{\e^2\delta} \Big)<\infty.
\ea
\end{lem}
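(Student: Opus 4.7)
The plan is to reduce Lemma \ref{l:estim_X} to the single-membrane estimates of Corollary \ref{cor:accurate_estim_X} via a stop-and-shift argument, using a regular conditional distribution in lieu of a (not-yet-available) strong Markov property.

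Between $\tau^{\e,\delta}_j$ and $\tau^{\e,\delta}_{j+1}$, the process $X^{\e,\delta}$ lies in the open strip $(a^\e_{k_j^{\e,\delta}-1}, a^\e_{k_j^{\e,\delta}+1})$ whose interior contains only the single membrane $a^\e_{k_j^{\e,\delta}}$, so only that membrane contributes to the local-time part of \eqref{e:XY} during this interval. Introduce the shifted processes
\[
\bar X_s := X^{\e,\delta}_{\tau^{\e,\delta}_j+s} - a^\e_{k_j^{\e,\delta}}, \qquad \bar Y_s := Y^{\e,\delta}_{\tau^{\e,\delta}_j+s}, \qquad \bar W_s := W_{\tau^{\e,\delta}_j+s} - W_{\tau^{\e,\delta}_j}.
\]
Passing to a regular conditional distribution given $\rF^{\e,\delta}_{\tau^{\e,\delta}_j}$ (on which $\tau^{\e,\delta}_j$, $k_j^{\e,\delta}$, and $Y^{\e,\delta}_{\tau^{\e,\delta}_j}$ become deterministic), the triple $(\bar X,\bar Y,\bar W)$ is a weak solution of the one-membrane SDE \eqref{e:XY_1membrane} started at $(0, Y^{\e,\delta}_{\tau^{\e,\delta}_j})$, with coefficients obtained by translating the time and space arguments of $\sigma, b, \beta, \theta$ by $\tau^{\e,\delta}_j$ and $a^\e_{k_j^{\e,\delta}}$, respectively. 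The stopping time $\tau^{\e,\delta}_{j+1}-\tau^{\e,\delta}_j$ is precisely the exit time of $\bar X$ from $(-\e a_-, \e a_+)$ with $a_\pm := \pm(a^\e_{k_j^{\e,\delta}\pm 1} - a^\e_{k_j^{\e,\delta}})/\e$. By $\boldsymbol{\mathrm A}^\mathrm{sep}_a$ one has $a_\pm \in [C^{-1}, C]$, and the translated coefficients inherit the same uniform bounds and Lipschitz constants as the originals, so Corollary \ref{cor:accurate_estim_X} applies.

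Using $a_\pm\e = \pm(a^\e_{k_j^{\e,\delta}\pm 1}-a^\e_{k_j^{\e,\delta}})$, so that
\[
a_-a_+\e^2 = (a^\e_{k_j^{\e,\delta}}-a^\e_{k_j^{\e,\delta}-1})(a^\e_{k_j^{\e,\delta}+1}-a^\e_{k_j^{\e,\delta}}), \qquad (a_-+a_+)\e = a^\e_{k_j^{\e,\delta}+1}-a^\e_{k_j^{\e,\delta}-1},
\]
the leading terms from Corollary \ref{cor:accurate_estim_X} rewrite as exactly the right-hand sides of \eqref{e:accL}--\eqref{e:accX}, with $\Sigma^{00}, b^0, \beta$ evaluated at the post-translation ``origin'' $(\tau^{\e,\delta}_j, a^\e_{k_j^{\e,\delta}}, Y^{\e,\delta}_{\tau^{\e,\delta}_j})$. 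Since the error constants in Corollary \ref{cor:accurate_estim_X} depend only on the uniform bound $C$ from \eqref{e:aC}--\eqref{eq:ass_bounds_coeff_SDE} and on $C_\mathrm{Lip}$ — both unchanged by translation and independent of $j$ — dividing by the appropriate monomial in $\e, \delta$ gives the uniform essential-supremum bound.

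The main obstacle is the conditioning in the first step: because uniqueness of \eqref{e:XY} is not assumed in Section \ref{s:many} (it requires the stronger hypotheses of Theorem \ref{t:existU}.2), one cannot invoke a genuine strong Markov property. The workaround is to pass to a regular conditional distribution on the canonical path space and exploit the fact that the estimates of Lemma \ref{lem:accu_estimates} were obtained from It\^o's and Tanaka's formulas applied to an \emph{arbitrary} weak solution. Thus only weak-solution status of the conditional law is needed, which follows from the definition of a weak solution of \eqref{e:XY} via standard stopping-time arguments — no uniqueness or strong Markov property is invoked.
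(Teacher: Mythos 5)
Your proposal is correct and is essentially the argument the paper intends: the paper states Lemma \ref{l:estim_X} without proof as a ``reformulation'' of Corollary \ref{cor:accurate_estim_X}, and the content of that reformulation is exactly your stop-and-shift reduction to the one-membrane strip $(a^\e_{k_j^{\e,\delta}-1},a^\e_{k_j^{\e,\delta}+1})$ with $a_\pm=\pm(a^\e_{k_j^{\e,\delta}\pm1}-a^\e_{k_j^{\e,\delta}})/\e\in[C^{-1},C]$ by $\boldsymbol{\mathrm A}^{\mathrm{sep}}_a$. Your use of a regular conditional distribution given $\rF^{\e,\delta}_{\tau^{\e,\delta}_j}$, rather than a strong Markov property, correctly handles the fact that uniqueness is not assumed here, and the observation that the error constants depend only on $C$ and $C_{\mathrm{Lip}}$ gives the required uniformity in $j$ and $\omega$.
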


\begin{lem}
\label{l:55}
Under the assumptions
$\mathbf{A}_\mathrm{coeff}^{C_b}$,
$\mathbf{A}_\mathrm{coeff}^\mathrm{Lip}$,
$\mathbf{A}^\mathrm{sep}_a$,
$\mathbf{A}_{\Sigma^{00}}$,
the following ucp-limits hold as $\e,\delta\to 0$:
\ba
\label{e:csumt}
\sum_{\tau^{\e,\delta}_{j}\leq t}
\E\Big[{\tau^{\e,\delta}_{j+1}} - {\tau^{\e,\delta}_{j }} \Big|\rF^{\e,\delta}_{\tau^{\e,\delta}_{j }} \Big]
\to  t,
\ea
\ba
\label{e:csumt1}
\sum_{\tau^{\e,\delta}_{j}\leq t}
\frac{(a^\e_{k_j^{\e,\delta}}-a^\e_{k_j^{\e,\delta}-1})(a^\e_{k_j^{\e,\delta}+1}-a^\e_{k_j^{\e,\delta}})}
{\Sigma^{00}( \tau^{\e,\delta}_{j }  ,a^\e_{k_j^{\e,\delta}}, Y^{\e,\delta}_{\tau^{\e,\delta}_{j } } )}
              \to t,
\ea
\ba
\label{e:csumL}
\sum_{\tau^{\e,\delta}_{j}\leq t}   \frac{a^\e_{k_j^{\e,\delta}+1}-a^\e_{k_j^{\e,\delta}-1} }
{2 {\Sigma^{00}(\tau^{\e,\delta}_{j }  ,a^\e_{k_j^{\e,\delta}}, Y^{\e,\delta}_{\tau^{\e,\delta}_{j } } )}  }
\E\Big[ L^{ a^\e_{k_j^{\e,\delta}} }_{\tau^{\e,\delta}_{j+1}}  (X^{\e,\delta}) -L^{a^\e_{k_j^{\e,\delta}} }_{\tau^{\e,\delta}_{j }}  (X^{\e,\delta})\Big| \rF^{\e,\delta}_{\tau^{\e,\delta}_{j }} \Big]
\to t,
\ea
\ba
\label{e:csumL1}
\sum_{\tau^{\e,\delta}_{j}\leq t}   \frac{ a^\e_{k_j^{\e,\delta}+1}-a^\e_{k_j^{\e,\delta}-1} }{2 {\Sigma^{00}(\tau^{\e,\delta}_{j }  ,
 a^\e_{k_j^{\e,\delta}}, Y^{\e,\delta}_{\tau^{\e,\delta}_{j } } )}  }  \Big(L^{ a^\e_{k_j^{\e,\delta}} }_{\tau^{\e,\delta}_{j+1}}  (X^{\e,\delta}) -L^{a^\e_{k_j^{\e,\delta}} }_{\tau^{\e,\delta}_{j }}  (X^{\e,\delta})  \Big)
\to t.
\ea
\end{lem}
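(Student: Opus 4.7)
\medskip\noindent\textbf{Proof proposal.} The four statements should be proved in the order \eqref{e:csumt}, \eqref{e:csumt1}, \eqref{e:csumL}, \eqref{e:csumL1}, each following from the previous one together with one of the preceding lemmas, so that the only substantive estimate sits at the first step. Throughout, set $N^{\e,\delta}(t):=\max\{j\in\mathbb N_0\colon \tau^{\e,\delta}_j\leq t\}$. Before starting, I would record the a priori bound $N^{\e,\delta}(t)\leq \kappa(t)\e^{-2}$ with probability arbitrarily close to one, valid for all sufficiently small $\e,\delta$: this is an immediate consequence of the lower bound \eqref{e:tau2} of Lemma \ref{cor:estimate_hittings} (each single excursion has conditional mean at least $B\e^2$) combined with a standard Chebyshev/Markov estimate. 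This bound is what allows one to replace the supremum in Lemmas \ref{l:loctime} and \ref{l:Eloctime} by the random index $N^{\e,\delta}(t)$.

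For \eqref{e:csumt}, observe that the telescoping identity
\begin{equation*}
\sum_{\tau^{\e,\delta}_j\leq t}(\tau^{\e,\delta}_{j+1}-\tau^{\e,\delta}_j)=\tau^{\e,\delta}_{N^{\e,\delta}(t)+1}
\end{equation*}
together with \eqref{e:tau} of Lemma \ref{l:Eloctime} (which bounds $|\tau^{\e,\delta}_{N^{\e,\delta}(t)+1}-t|$ by the maximal gap) gives $\tau^{\e,\delta}_{N^{\e,\delta}(t)+1}\overset{\P}{\to}t$ uniformly on compact intervals. Subtracting the martingale increment sum, which is ucp-negligible by \eqref{e:sumtau} of Lemma \ref{l:loctime}, yields \eqref{e:csumt}.

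For \eqref{e:csumt1}, apply the second-moment expansion \eqref{e:acctau} of Lemma \ref{l:estim_X} to each summand of \eqref{e:csumt}. Each summand is modified by an error of order $\mathcal O(\e^3)+\mathcal O(\e^2\delta)$, and since there are at most $O(\e^{-2})$ summands with high probability, the total error is $\mathcal O(\e)+\mathcal O(\delta)\overset{\P}{\to}0$. An analogous estimate proves \eqref{e:csumL}: the prefactor $(a^\e_{k_j^{\e,\delta}+1}-a^\e_{k_j^{\e,\delta}-1})/(2\Sigma^{00})$ is of order $\e$ by Assumptions $\mathbf{A}^{\mathrm{sep}}_a$ and $\mathbf{A}_{\Sigma^{00}}$, so multiplying it by the expansion \eqref{e:accL} produces exactly the expression in \eqref{e:csumt1} plus a per-term error of order $\e\cdot(\mathcal O(\e^2)+\mathcal O(\e\delta))=\mathcal O(\e^3)+\mathcal O(\e^2\delta)$; summing again gives a negligible total.

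Finally, \eqref{e:csumL1} follows from \eqref{e:csumL} by applying Lemma \ref{l:loctime}, \eqref{e:sumL}, with
\begin{equation*}
\xi^{\e,\delta}_j:=\frac{a^\e_{k_j^{\e,\delta}+1}-a^\e_{k_j^{\e,\delta}-1}}{2\e\,\Sigma^{00}\bigl(\tau^{\e,\delta}_j,a^\e_{k_j^{\e,\delta}},Y^{\e,\delta}_{\tau^{\e,\delta}_j}\bigr)},
\end{equation*}
which is uniformly bounded by Assumptions $\mathbf{A}^{\mathrm{sep}}_a$ and $\mathbf{A}_{\Sigma^{00}}$ and is $\rF^{\e,\delta}_{\tau^{\e,\delta}_j}$-measurable. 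Note that between $\tau^{\e,\delta}_j$ and $\tau^{\e,\delta}_{j+1}$ the process $X^{\e,\delta}$ stays in the strip $(a^\e_{k_j^{\e,\delta}-1},a^\e_{k_j^{\e,\delta}+1})$, so the inner sum over $k$ in \eqref{e:sumL} collapses to the single term $k=k_j^{\e,\delta}$ and matches precisely the local-time differences in \eqref{e:csumL}--\eqref{e:csumL1}. The main obstacle throughout is the uniformity: one must know a priori that $N^{\e,\delta}(t)=O(\e^{-2})$ with high probability so that error terms which are summed $N^{\e,\delta}(t)$ times remain controlled; once this bound is secured via \eqref{e:tau2}, the four assertions reduce to bookkeeping on top of Lemmas \ref{l:loctime}, \ref{l:Eloctime} and \ref{l:estim_X}.
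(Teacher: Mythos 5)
Your proposal follows essentially the same route as the paper's proof: the same ordering of the four limits, the same telescoping-plus-martingale decomposition for \eqref{e:csumt} using \eqref{e:tau} and \eqref{e:sumtau}, the same use of the expansions \eqref{e:acctau} and \eqref{e:accL} combined with the $O(\e^{-2})$ high-probability bound on the number of membrane visits (the paper's counting-process estimate \eqref{e:nuN}, which it imports from Lemma 4.1 of \cite{Aryasova+24} and which you correctly identify as the key uniformity ingredient, derivable from \eqref{e:tau2} plus Chebyshev), and the same application of \eqref{e:sumL} with a bounded adapted weight for the final step. The only cosmetic slip is that your telescoping sum equals $\tau^{\e,\delta}_{N^{\e,\delta}(t)+1}-\tau^{\e,\delta}_0$ rather than $\tau^{\e,\delta}_{N^{\e,\delta}(t)+1}$; the paper retains the extra $\tau^{\e,\delta}_0$ term in its bound \eqref{e:sup1}, and it is negligible in the limit.
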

\begin{proof}
To show  \eqref{e:csumt}, we write
for each $T\in[0,\infty)$:
\ba
\label{e:sup1}
&\sup_{t\in[0,T]}\Big| \sum_{\tau^{\e,\delta}_{j}\leq t}\E\Big[{\tau^{\e,\delta}_{j+1}}
- {\tau^{\e,\delta}_{j }} \Big|\rF_{\tau^{\e,\delta}_{j }} \Big] -t \Big|\\
&\leq
\sup_{t\in[0,T]} \Big|
\sum_{\tau^{\e,\delta}_{j}\leq t}(\tau^{\e,\delta}_{j+1}
- {\tau^{\e,\delta}_{j }} ) -t \Big|
+
\sup_{t\in[0,T]}\Big| \sum_{\tau^{\e,\delta}_{j}\leq t}  \Big( \tau^{\e,\delta}_{j+1}
- {\tau^{\e,\delta}_{j }}
- \E\Big[{\tau^{\e,\delta}_{j+1}} - {\tau^{\e,\delta}_{j }}\Big|\rF_{\tau^{\e,\delta}_{j }} \Big] \Big) \Big|\\
&\leq
\tau^{\e,\delta}_0
+ \max_{\tau^{\e,\delta}_{j}\leq T} \Big|\tau^{\e,\delta}_{j+1} -\tau^{\e,\delta}_{j} \Big|
+\sup_{t\in[0,T]}\Big| \sum_{\tau^{\e,\delta}_{j}\leq t}
\Big(\tau^{\e,\delta}_{j+1} - {\tau^{\e,\delta}_{j }}    - \E\Big[{\tau^{\e,\delta}_{j+1}}
- {\tau^{\e,\delta}_{j }} \Big|\rF_{\tau^{\e,\delta}_{j }} \Big] \Big)\Big|.
\ea
 Let $\nu^{\e,\delta}= (\nu^{\e,\delta}_t)$ be a counting
process of visits to $\{a_k^\e\}_{k\in\mathbb Z}$,
\ba
 \nu^{\e,\delta}_t = k \quad \Leftrightarrow\quad
\tau_0^{\e,\delta} < \cdots < \tau_k^{\e,\delta}  \leq t,\ \tau_{k+1}^{\e,\delta}>t  .
\ea
As in Lemma 4.1 of \cite{Aryasova+24} we show that for each $T\in[0,\infty)$ there is $N\in(0,\infty)$ such that
\ba
\label{e:nuN}
\lim_{\e\to 0}\P( \nu^{\e,\delta}_T >NT\e^{-2} )=0.
\ea
The proof of \eqref{e:csumt} follows from \eqref{e:nuN}, \eqref{e:sup1} and Lemmas \ref{l:loctime} and \ref{l:Eloctime}.
The proof of \eqref{e:csumt1} follows from \eqref{e:csumt}, \eqref{e:acctau} and \eqref{e:nuN}.
The proof of \eqref{e:csumL} follows from \eqref{e:csumt1} and \eqref{e:accL} and \eqref{e:acctau}.
The proof of \eqref{e:csumL1} follows from \eqref{e:csumL} and \eqref{e:sumL}.
\end{proof}

\begin{lem}
\label{cor:conv_CAF_to_t}
Under the assumptions
$\mathbf{A}_\mathrm{coeff}^{C_b}$,
$\mathbf{A}_\mathrm{coeff}^\mathrm{Lip}$,
$\mathbf{A}_d^{\mathrm{Lip}_b}$,
$\mathbf{A}_{\Sigma^{00}}$,
the following ucp-limits hold as $\e,\delta\to 0$:
\ba
\label{e:csumt1d}
    \e^2  \sum_{\tau^{\e,\delta}_{j}\leq t}  
     \frac{d^2 ( a^\e_{k_j^{\e,\delta}})}{\Sigma^{00}(\tau^{\e,\delta}_j,a^\e_{k_j^{\e,\delta}}, Y^{\e,\delta}_{\tau^{\e,\delta}_{j } } )}
\to t,
\ea
\ba
\label{e:csumL1d}
\e   \sum_{\tau^{\e,\delta}_j \leq t}
     \frac{d  (a^\e_{k_j^{\e,\delta}})}{\Sigma^{00}(\tau^{\e,\delta}_j,a^\e_{k_j^{\e,\delta}}, Y^{\e,\delta}_{\tau^{\e,\delta}_j } )}
\Big(
L^{a^\e_{k_j^{\e,\delta}}}_{\tau^{\e,\delta}_{j+1}}  (X^{\e,\delta})
-L^{a^\e_{k_j^{\e,\delta}}}_{\tau^{\e,\delta}_{j }} (X^{\e,\delta})\Big)
\to t.
\ea
\end{lem}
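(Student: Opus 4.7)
The plan is to derive \eqref{e:csumt1d} and \eqref{e:csumL1d} from the companion assertions \eqref{e:csumt1} and \eqref{e:csumL1} of Lemma \ref{l:55}, by replacing the geometric spacing factors with their asymptotic expressions in terms of $d$. Writing $\eta_k^\e := (a^\e_{k+1} - a^\e_k)/\e - d(a^\e_k)$, Assumption $\mathbf{A}_d^{\mathrm{Lip}_b}$ yields $\sup_{k\in\mathbb Z}|\eta_k^\e|\to 0$; combined with the Lipschitz continuity and boundedness of $d$ and the separation bound $|a^\e_{k+1} - a^\e_{k}|\leq C\e$ from $\mathbf{A}^\mathrm{sep}_a$, one obtains the uniform-in-$k$ approximations
\begin{equation*}
(a^\e_k - a^\e_{k-1})(a^\e_{k+1} - a^\e_k) = \e^2 d^2(a^\e_k) + o(\e^2),
\qquad
\tfrac{1}{2}(a^\e_{k+1} - a^\e_{k-1}) = \e\, d(a^\e_k) + o(\e),
\end{equation*}
where the $o$-symbols are uniform in $k$ and deterministic.

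For \eqref{e:csumt1d} I would subtract its left-hand side from that of \eqref{e:csumt1} and show the resulting difference tends to $0$ ucp. Since $\mathbf{A}_{\Sigma^{00}}$ provides a uniform lower bound on $\Sigma^{00}$, hence an upper bound on $1/\Sigma^{00}$, each summand in the difference contributes an error of magnitude $o(\e^2)$ uniformly in $j$ and $\omega$. The counting estimate $\nu^{\e,\delta}_t\leq Nt\e^{-2}$ in probability from \eqref{e:nuN} then bounds the sum by $o(\e^{-2})\cdot o(\e^2) = o(1)$, so the ucp convergence transfers from \eqref{e:csumt1} to \eqref{e:csumt1d}.

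The derivation of \eqref{e:csumL1d} proceeds analogously, but with the additional need to control the total local-time mass $S^{\e,\delta}_t := \sum_{\tau^{\e,\delta}_j \leq t}(L^{a^\e_{k_j^{\e,\delta}}}_{\tau^{\e,\delta}_{j+1}} - L^{a^\e_{k_j^{\e,\delta}}}_{\tau^{\e,\delta}_j})$, which now appears as a weight in the error bound. The estimate $S^{\e,\delta}_t = O(\e^{-1})$ in probability can be read off from \eqref{e:csumL1} itself: since the prefactor $(a^\e_{k_j^{\e,\delta}+1} - a^\e_{k_j^{\e,\delta}-1})/(2\Sigma^{00}(\cdot))$ is bounded below by a constant multiple of $\e$ (by $\mathbf{A}^\mathrm{sep}_a$ together with the upper bound on $\Sigma^{00}$ coming from $\mathbf{A}_\mathrm{coeff}^{C_b}$), the quantity $c\,\e\,S^{\e,\delta}_t$ is majorised up to a constant by the left-hand side of \eqref{e:csumL1}, which converges to $t$. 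As the per-term coefficient discrepancy is $o(\e)$, the total error is $o(\e)\cdot S^{\e,\delta}_t = o(1)$, and the claim follows. The main point is the order-of-magnitude bookkeeping rather than any genuine obstacle: each approximation step gains one power of $\e$ over the leading order, which precisely compensates for the $O(\e^{-2})$ number of summands, respectively the $O(\e^{-1})$ total local-time mass.
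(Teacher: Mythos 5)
Your proposal is correct and follows essentially the same route as the paper: the paper likewise deduces \eqref{e:csumt1d} and \eqref{e:csumL1d} from \eqref{e:csumt1} and \eqref{e:csumL1} via the uniform approximations $a^\e_{k\pm 1}-a^\e_{k}=\pm\e\, d(a^\e_k)+o(\e)$ supplied by $\mathbf{A}_d^{\mathrm{Lip}_b}$. Your explicit bookkeeping of the error (the $O(\e^{-2})$ count of summands from \eqref{e:nuN} and the $O(\e^{-1})$ bound on the total local-time mass extracted from \eqref{e:csumL1}) is exactly the detail the paper leaves implicit.
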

\begin{proof}
Due to the uniform continuity of $d$,
uniformly over $j$, $\e$ etc.\
we have
\ba
a^\e_{n_j^{\e,\delta}}-a^\e_{n_j^{\e,\delta}-1}&=\e d(a^\e_{n_j^{\e,\delta}})
+
\e (d(a^\e_{n_j^{\e,\delta}-1}) - d(a^\e_{n_j^{\e,\delta}})) + o(\e)\\
&=
\e d(a^\e_{n_j^{\e,\delta}}) +  o(\e),
\\
a^\e_{n_j^{\e,\delta}+1}-a^\e_{n_j^{\e,\delta}}&= \e d(a^\e_{n_j^{\e,\delta}}) +  o(\e).
\ea
The limit \eqref{e:csumt1d} follows from \eqref{e:csumt1}, and the limit \eqref{e:csumL1d} follows from \eqref{e:csumL1}.
\end{proof}

\begin{lem}
\label{e:tot}
Suppose that assumptions
$\mathbf{A}_\mathrm{coeff}^{C_b}$,
$\mathbf{A}_\mathrm{coeff}^\mathrm{Lip}$,
$\mathbf{A}_d^{\mathrm{Lip}_b}$,
$\mathbf{A}_{\Sigma^{00}}$ hold.
Assume that the family $\{X^{\e,\delta},Y^{\e,\delta}\}_{\e,\delta\in(0,1]}$ is weakly relatively compact.
Let $\{\e_n\}$, $\{\delta_n\}\subseteq(0,1]$ be such that $\lim \e_n=\lim \delta_n=0$.
Then
\ba
\e_n
\sum_{k=-\infty}^\infty\int_0^t \frac{d(X^{\e_n,\delta_n}_s)}{\Sigma^{00}(s,X^{\e_n,\delta_n}_s, Y^{\e_n,\delta_n}_s)}\,
\di L^{a_k^\e}_s (X^{\e_n,\delta_n})\to t
\ea
in u.c.p.
\end{lem}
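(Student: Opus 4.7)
The plan is to reduce the sum to the one appearing in \eqref{e:csumL1d} of Lemma \ref{cor:conv_CAF_to_t}. Since the measure $\di L^{a_k^{\e_n}}_s(X^{\e_n,\delta_n})$ is supported on $\{s\colon X^{\e_n,\delta_n}_s=a_k^{\e_n}\}$, I first freeze $X^{\e_n,\delta_n}_s$ at $a_k^{\e_n}$ inside each integrand and obtain
\begin{equation*}
\int_0^t \frac{d(X^{\e_n,\delta_n}_s)}{\Sigma^{00}(s,X^{\e_n,\delta_n}_s,Y^{\e_n,\delta_n}_s)}\,\di L^{a_k^{\e_n}}_s(X^{\e_n,\delta_n})
= \int_0^t \frac{d(a_k^{\e_n})}{\Sigma^{00}(s,a_k^{\e_n},Y^{\e_n,\delta_n}_s)}\,\di L^{a_k^{\e_n}}_s(X^{\e_n,\delta_n}).
\end{equation*}

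Next I partition the time axis along the stopping times $\tau^{\e_n,\delta_n}_j$. On each interval $(\tau^{\e_n,\delta_n}_j,\tau^{\e_n,\delta_n}_{j+1})$ the process $X^{\e_n,\delta_n}$ stays between $a^{\e_n}_{k_j^{\e_n,\delta_n}-1}$ and $a^{\e_n}_{k_j^{\e_n,\delta_n}+1}$, so only the local time at the middle membrane $a^{\e_n}_{k_j^{\e_n,\delta_n}}$ can grow. The sum over $k$ collapses and the quantity to analyse becomes
\begin{equation*}
\e_n\sum_{j\colon \tau^{\e_n,\delta_n}_j\le t}\int_{\tau^{\e_n,\delta_n}_j}^{\tau^{\e_n,\delta_n}_{j+1}\wedge t} \frac{d(a^{\e_n}_{k_j^{\e_n,\delta_n}})}{\Sigma^{00}(s,a^{\e_n}_{k_j^{\e_n,\delta_n}},Y^{\e_n,\delta_n}_s)}\,\di L^{a^{\e_n}_{k_j^{\e_n,\delta_n}}}_s(X^{\e_n,\delta_n}).
\end{equation*}

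I would then freeze $\Sigma^{00}$ at the left endpoint of each interval. By Lipschitz continuity of $(s,y)\mapsto 1/\Sigma^{00}(s,a,y)$ uniformly in $a$, the resulting error is dominated by
\begin{equation*}
C\e_n\Big(\max_j(\tau^{\e_n,\delta_n}_{j+1}-\tau^{\e_n,\delta_n}_j)+\max_j\sup_{s\in[\tau^{\e_n,\delta_n}_j,\tau^{\e_n,\delta_n}_{j+1}]}|Y^{\e_n,\delta_n}_s-Y^{\e_n,\delta_n}_{\tau^{\e_n,\delta_n}_j}|\Big)\sum_{k\in\bZ}L^{a_k^{\e_n}}_t(X^{\e_n,\delta_n}).
\end{equation*}
The two maxima vanish in probability by \eqref{e:tau} of Lemma \ref{l:Eloctime} together with \eqref{e:Y1} of Lemma \ref{cor:estimate_hittings} combined with the bound \eqref{e:nuN} on the number of switches in $[0,t]$; meanwhile $\e_n\sum_k L^{a_k^{\e_n}}_t(X^{\e_n,\delta_n})$ is bounded in probability by \eqref{e:csumL1d}. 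A residual boundary contribution from the incomplete excursion $[\tau^{\e_n,\delta_n}_{\nu^{\e_n,\delta_n}_t},t]$ is absorbed into the same estimates. After freezing, what remains is exactly the sum on the left-hand side of \eqref{e:csumL1d}, which converges to $t$ in u.c.p.

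The main obstacle lies in Step 3, controlling the freezing error uniformly on $[0,T]$. Once the three ingredients above are simultaneously in hand, namely a maximal interval-length estimate, a maximal $Y$-oscillation bound, and a tightness bound for $\e_n\sum_k L^{a_k^{\e_n}}_t$, the proof reduces to a direct invocation of \eqref{e:csumL1d}.
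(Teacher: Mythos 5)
Your proposal is correct and follows essentially the same route as the paper: you freeze the slowly varying integrand at the excursion endpoints, control the freezing error via the maximal excursion length \eqref{e:tau}, the maximal $Y$-oscillation, and the number-of-crossings bound \eqref{e:nuN}, and then reduce to \eqref{e:csumL1d}. The only cosmetic differences are that you first use the support of $\di L^{a_k}$ to replace $X_s$ by $a_k$ exactly (which the paper does implicitly via its piecewise-constant $H_n$), and that you invoke \eqref{e:Y1} to control the $Y$-oscillation where the paper cites weak relative compactness of $(X^{\e,\delta},Y^{\e,\delta})$ — both are valid, and the underlying decomposition is the same.
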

\begin{proof}
We introduce a random function
\ba
H_n(t):=
\frac{d  ( x )}{\Sigma^{00}(0,x,y )}
\mathbb I_{[0, \tau^{\e_n,\delta_n}_0)}(t)+
\sum_{j=0}^\infty
     \frac{d  (a^{\e_n}_{k_j^{\e_n,\delta_n}})}{\Sigma^{00}(\tau^{\e_n,\delta_n}_j,a^{\e_n}_{k_j^{\e_n,\delta_n}},
     Y^{\e_n,\delta_n}_{\tau^{\e_n,\delta_n}_j } )}
\mathbb I_{[\tau^{\e_n,\delta_n}_{j}, \tau^{\e_n,\delta_n}_{j+1} )}(t).
\ea
In follows from \eqref{e:tau} and \eqref{e:nuN} and the weak relative compactness of the family
$\{X^{\e_n,\delta_n},Y^{\e_n,\delta_n}\}_{n\in\mathbb N}$
that
\ba
\label{e:h0}
\Big|H_n(t)
-
\frac{d(X^{\e_n,\delta_n}_t)}{\Sigma^{00}(t,X^{\e_n,\delta_n}_t, Y^{\e_n,\delta_n}_t)}
\Big|\to 0
\ea
in u.c.p.
We have
\ba
\Big|     & \e_n    \sum_{k=-\infty}^\infty \int_0^t H_n(s)\,\di L^{a_k^{\e_n}}_s (X^{\e_n,\delta_n})
-
\e_n   \sum_{\tau^{\e_n,\delta_n}_j \leq t}
\frac{d  (a^{\e_n}_{k_j^{\e_n,\delta_n}})}{\Sigma^{00}(\tau^{\e_n,\delta_n}_j,a^\e_{k_j^{\e_n,\delta_n}},
Y^{\e_n,\delta_n}_{\tau^{\e_n,\delta_n}_j } )}
\Big(
L^{a^{\e_n}_{k_j^{\e_n,\delta}}}_{\tau^{\e_n,\delta_n}_{j+1}}  (X^{\e_n,\delta_n})
-L^{a^{\e_n}_{k_j^{\e_n,\delta_n}}}_{\tau^{\e_n,\delta_n}_{j }} (X^{\e_n,\delta_n})\Big)
\Big|\\
&\leq
\max_{\tau_j^{\e_n,\delta_n}\leq t}
\e_n
\frac{d  (a^{\e_n}_{k_j^{\e_n,\delta_n}})}{\Sigma^{00}(\tau^{\e_n,\delta_n}_j,
a^{\e_n}_{n_j^{\e_n,\delta_n}}, Y^{\e_n,\delta_n}_{\tau^{\e_n,\delta_n}_j } )}
\Big(
L^{a^{\e_n}_{k_j^{\e_n,\delta_n}}}_{\tau^{\e_n,\delta_n}_{j+1}}  (X^{\e_n,\delta_n})
-L^{a^{\e_n}_{k_j^{\e_n,\delta_n}}}_{\tau^{\e_n,\delta_n}_{j }} (X^{\e_n,\delta_n})\Big)
\stackrel{\text{u.c.p.}}{\to} 0.
\ea
Applying \eqref{e:csumL1d} and \eqref{e:h0} finishes the proof.
\end{proof}

The next Lemma is well-known; the proof is omitted.

\begin{lem}
\label{lem:conv_integrals_ordinary}
Let $\{l_n\}_{n\in\mathbb N}$ and $\{h_n\}_{n\in\mathbb N}$ be sequences of real valued c\`adl\`ag functions on $\bR_+$,
and let $l_0$ and $h_0$ be real valued continuous functions on $\bR_+$ such that
    \begin{enumerate}
        \item each function $t\mapsto l_n(t)$, $n\in \mathbb N$, is non-decreasing,
        \item $\lim_{n\to\infty}l_n=l_0$ point-wise,
        \item $\lim_{n\to\infty}h_n=h_0$ locally uniformly.
    \end{enumerate}
Then,
\ba
\lim_{n\to\infty}\int_0^\cdot h_n(s)\, \di l_n(s)=\int_0^\cdot h_0(s)\, \di l_0(s)
\ea
locally uniformly.
\end{lem}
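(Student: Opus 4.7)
The plan is to reduce the statement to three elementary ingredients: locally uniform convergence of $l_n$ to $l_0$, locally uniform convergence of $h_n$ to $h_0$, and uniform approximation of the continuous $h_0$ by step functions. Fix $T\in(0,\infty)$; all estimates will be uniform in $t\in[0,T]$.

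First I would upgrade the pointwise convergence $l_n\to l_0$ to locally uniform convergence. Since each $l_n$ is non-decreasing, so is the pointwise limit $l_0$; being continuous on $[0,T]$, it is uniformly continuous there. Given $\eta>0$, choose a partition $0=t_0<t_1<\dots<t_N=T$ with $l_0(t_k)-l_0(t_{k-1})<\eta$ for every $k$. Pointwise convergence at the finitely many nodes gives $|l_n(t_k)-l_0(t_k)|<\eta$ for all large $n$, and monotonicity of $l_n$ sandwiches $l_n(t)$ between $l_n(t_{k-1})$ and $l_n(t_k)$ for $t\in[t_{k-1},t_k]$, yielding $\sup_{t\in[0,T]}|l_n(t)-l_0(t)|\le 2\eta$. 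This is the main technical step, and it is essential that $l_0$ be continuous.

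Next, for every $t\in[0,T]$ I would split
\[
\int_0^t h_n\,dl_n-\int_0^t h_0\,dl_0 = \int_0^t(h_n-h_0)\,dl_n + \int_0^t h_0\,d(l_n-l_0).
\]
The first term is dominated by $\sup_{[0,T]}|h_n-h_0|\cdot l_n(T)$, which vanishes because $\{l_n(T)\}$ is bounded and $h_n\to h_0$ uniformly on $[0,T]$. For the second term, pick a step function $\tilde h=\sum_{k=1}^N c_k\bI_{[s_{k-1},s_k)}$ with $\sup_{[0,T]}|h_0-\tilde h|<\eta$, and estimate
\[
\Big|\int_0^t h_0\,d(l_n-l_0)\Big| \le \eta\bigl(l_n(T)+l_0(T)\bigr) + \Big|\int_0^t \tilde h\,d(l_n-l_0)\Big|.
\]
The remaining integral equals $\sum_{k=1}^N c_k\bigl((l_n-l_0)(s_k\wedge t)-(l_n-l_0)(s_{k-1}\wedge t)\bigr)$, whose supremum over $t\in[0,T]$ is at most $2N\max_k|c_k|\cdot \sup_{[0,T]}|l_n-l_0|$ and hence tends to $0$ by Step~1. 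Since $\eta>0$ is arbitrary, the locally uniform convergence follows.

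The main obstacle is the first step: upgrading pointwise to locally uniform convergence of $l_n$, which crucially exploits both the monotonicity of the $l_n$ and the continuity of the limit $l_0$. Without continuity of $l_0$ the argument collapses (jumps of $l_0$ could hide mass near partition points), and without monotonicity the sandwich between node values would fail; everything downstream is a routine linearity and approximation argument.
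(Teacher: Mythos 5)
Your proof is correct. The paper omits the proof of this lemma, declaring it well-known, and your argument is the standard one: the P\'olya-type upgrade of pointwise to locally uniform convergence for non-decreasing functions with a continuous limit, followed by the usual splitting $\int h_n\,\di l_n-\int h_0\,\di l_0=\int(h_n-h_0)\,\di l_n+\int h_0\,\di(l_n-l_0)$ and a step-function approximation of $h_0$; the only cosmetic quibble is that the bound on the first term should read $\sup_{[0,T]}|h_n-h_0|\cdot(l_n(T)-l_n(0))$ rather than $\sup_{[0,T]}|h_n-h_0|\cdot l_n(T)$, which is harmless since $l_n(T)-l_n(0)$ is bounded in $n$ by pointwise convergence at the two endpoints.
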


\begin{proof}[Proof of Theorem \ref{thm:conv_CAF}]
Without loss of generality, by Skorokhod's representation theorem,
we may assume that
\ba
(X^{\e_n, \delta_n}, Y^{\e_n, \delta_n},W^{\e_n, \delta_n})\to (X,Y,W),\quad n\to\infty,
\ea
locally uniformly on time intervals with probability 1.
Convergence of the Lebesgue integrals follows from Lebesgue's dominated convergence theorem.
Convergence of the It\^o stochastic integrals follows from Theorem 2.2 in \cite{KurtzP-91}.
It is left to establish convergence of integrals with respect to local times.

Let us introduce increasing continuous processes
\ba
l_0(t)&:=t,\\
l_n(t)&:=\e_n   \sum_k \int_0^t
     \frac{d  (X^{\e_n,\delta_n}_s)}{\Sigma^{00}(s, X^{\e_n,\delta_n}_s, Y^{\e_n,\delta_n}_s )}\, \di L^{a_k^\e}_{s} (X^{\e_n,\delta_n}),
     \quad n\in\mathbb N,
\ea
and continuous functions
\ba
h_0(t)&:= h(t,X_t,Y_t) \frac{\Sigma^{00}(t, X_{t }, Y_{t } )}{d  (X_{t})},\\
h_n(t)&:= h^{\e_n,  \delta_n}(t,X_t^{\e_n,  \delta_n},Y_t^{\e_n,  \delta_n})
\frac{\Sigma^{00}(t,X^{\e_n,\delta_n}_{t }, Y^{\e_n,\delta_n}_{t } )}{d  (X^{\e_n,\delta_n}_{t})},\quad n\in\mathbb N.
\ea
Note that
\ba
\e_n \sum_{k=-\infty}^\infty
\int_0^t h^{\e_n,  \delta_n}(s,a_k^{\e_n},Y_s^{\e_n,  \delta_n})\,\di L^{a_k^{\e_n}}_s (X^{\e_n,\delta_n})
=
\int_0^t h_n(s)\,\di l_n(s).
\ea
The application of Lemmas \ref{e:tot} and \ref{lem:conv_integrals_ordinary} finishes the proof.
\end{proof}

\section{Proof of Theorem \ref{t:A} and Corollary \ref{t:hom}\label{sec:proofs}}

It is sufficient to prove the theorem for any sequence $(\e_n,\delta_n,\lambda_n)_{n\in\mathbb N}$ converging to zero and
satisfying Assumptions $\mathbf{A}_\mathfrak{p}$ and $\mathbf{A}_\mathfrak{q}$.
For brevity, we omit the index $n$ and identify $\e_n=\e$, $\delta_n=\delta$, $\lambda_n=\lambda$.

1. Weak relative compactness. The families of stochastic and Lebesgue integrals
\ba
&\left\{\int_0^\cdot  \sigma_{l}^0(s, X^{\e,\delta}_s,Y^{\e,\delta}_s)\,\di W^l_s\right\}_{\e,\delta\in(0,1]}, \
\left\{\int_0^\cdot b^0(s,X^{\e,\delta}_s,Y_s^{\e,\delta})\,\di s\right\}_{\e,\delta\in(0,1]}, \\
&\left\{ \int_0^\cdot \sigma^{i}_l(s,X_s^{\e,\delta},Y_s^{\e,\delta})\,\di W^l_s\right\}_{\e,\delta\in(0,1]}, \
\left\{\int_0^\cdot b^i(s,X_s^{\e,\delta},Y_s^{\e,\delta})\,\di s\right\}_{\e,\delta\in(0,1]}
\ea
are weakly relatively compact because all the coefficients are bounded by Assumption $\mathbf{A}^{C_b}_\mathrm{coeff}$.

The weak relative compactness of the sequences
\ba
&\left\{\delta \sum_{k=-\infty}^\infty  \int_0^\cdot \beta(s,a_k^\e,Y_s^{\e,\delta})\,\di L^{a_k^\e}_s
(X^{\e,\delta})\right\}_{\e,\delta\in(0,1]} \text{ and }
\left\{\delta \sum_{k=-\infty}^\infty  \int_0^\cdot  \theta^i(s,a_k^\e,Y_s^{\e,\delta})\,\di L^{a_k^\e}_s
(X^{\e,\delta}) \right\}_{\e,\delta\in(0,1]}
\ea
will follow from the next Lemma \ref{cor:weak_comp_localT} and Assumption
that $\mathfrak{p}\in[0,\infty)$.

If additionally, $\mathfrak{q}\in[0,\infty)$, then the family
\ba
\left\{\lambda\sum_{k=-\infty}^\infty
\int_0^t  \gamma(s,a_k^\e,Y_s^{\e,\delta})\,\di L^{a_k^\e}_s (X^{\e,\delta}) \right\}_{\e,\delta,\lambda\in(0,1]}
\ea
is weakly relatively compact, too.

Therefore, the family $\{X^{\e,\delta},Y^{\e,\delta}\}$ is weakly relatively compact, and if
$\mathfrak{q}\in[0,\infty)$, then the
family $\{X^{\e,\delta},Y^{\e,\delta},A^{\e,\delta,\lambda}\}$ is weakly relatively compact, too.

\begin{lem}
\label{cor:weak_comp_localT}
Let $\{\e_n\}_{n\in\mathbb N},\{\delta_n\}_{n\in\mathbb N}\subseteq(0,1]$ be sequences converging to zero.
Let $\{f_\cdot^{\e_n,\delta_n}\}_{n\in\mathbb{N}}$ be a family of measurable
stochastic processes bounded by the same constant $C\in(0,\infty)$.

Then the family
\ba
\label{e:fdL}
\Big\{\e_n \sum_{k=-\infty}^\infty \int_0^\cdot  f_s^{\e_n,\delta_n} \di L^{a_k^{\e_n, \delta_n}}_s
(X^{\e_n, \delta_n})\Big\}_{n\in\mathbb N}
\ea
is weakly relatively compact in $C(\bR_+,\bR)$.
\end{lem}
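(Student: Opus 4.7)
The first step is to reduce the claim to tightness of the non-decreasing continuous processes
\ba
M^n_t := \e_n \sum_{k\in\bZ} L^{a_k^{\e_n}}_t(X^{\e_n,\delta_n}).
\ea
Since $|f^{\e_n,\delta_n}_\cdot|\le C$, the process $Z^n_t := \e_n \sum_k \int_0^t f^{\e_n,\delta_n}_s\,\di L^{a_k^{\e_n}}_s(X^{\e_n,\delta_n})$ has total variation on any interval $[s,t]$ dominated by $C(M^n_t - M^n_s)$. Hence weak relative compactness of $\{M^n\}$ in $C(\bR_+,\bR)$ implies that of $\{Z^n\}$, so it suffices to prove the former.

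For tightness of $\{M^n\}$ I would apply Aldous's criterion. For marginal tightness of $\{M^n_T\}$ at each $T>0$ I decompose
\ba
\sum_{k\in\bZ} L^{a_k^{\e_n}}_T(X^{\e_n,\delta_n}) = \sum_{j\colon \tau^{\e_n,\delta_n}_j\le T} \bigl(L^{a^{\e_n}_{k_j^{\e_n,\delta_n}}}_{\tau^{\e_n,\delta_n}_{j+1}\wedge T}(X^{\e_n,\delta_n}) - L^{a^{\e_n}_{k_j^{\e_n,\delta_n}}}_{\tau^{\e_n,\delta_n}_j}(X^{\e_n,\delta_n})\bigr),
\ea
control each summand using $\E[L^{a^{\e_n}_{k_j}}_{\tau^{\e_n,\delta_n}_{j+1}} - L^{a^{\e_n}_{k_j}}_{\tau^{\e_n,\delta_n}_j}\mid \rF^{\e_n,\delta_n}_{\tau^{\e_n,\delta_n}_j}]\le A_1\e_n$ from \eqref{e:L1}, and bound the expected number of membrane transitions via Wald's identity applied to the lower bound $\E[\tau^{\e_n,\delta_n}_{j+1}-\tau^{\e_n,\delta_n}_j\mid\rF^{\e_n,\delta_n}_{\tau^{\e_n,\delta_n}_j}]\ge B\e_n^2$ from \eqref{e:tau2}, obtaining $\E\,\nu^{\e_n,\delta_n}_T\le T/(B\e_n^2)+1$. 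Multiplying the two bounds yields $\E M^n_T\le KT$ uniformly in $n$, whence $\{M^n_T\}$ is tight by Markov.

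For the dynamic Aldous increment estimate I would run the same scheme on $[\tau,\tau+\delta]$ for a stopping time $\tau\le T$: enumerate the membrane visits $\varsigma_1<\varsigma_2<\cdots$ after $\tau$, apply the conditional version of Lemma \ref{cor:estimate_hittings} on each excursion $[\varsigma_j,\varsigma_{j+1}]$, and separately bound the contribution of the initial excursion from $\tau$, during which at most one local time (that of the membrane $a^{\e_n}_{k_0}$ with $X^{\e_n,\delta_n}_\tau=a^{\e_n}_{k_0}$, if any) can grow. This produces
\ba
\E\bigl[M^n_{\tau+\delta}-M^n_\tau \,\big|\,\rF^{\e_n,\delta_n}_\tau\bigr] \le K\bigl(\delta+\e_n^2\bigr) \quad\text{a.s.},
\ea
uniformly in $\tau$ and $n$. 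By Markov, $\P(M^n_{\tau+\delta}-M^n_\tau>\eta)\le K(\delta+\e_n^2)/\eta$, and since $\e_n\to 0$ the $\limsup_{n\to\infty}$ equals $K\delta/\eta$, which tends to $0$ as $\delta\downarrow 0$. Combined with the marginal tightness and the continuity and monotonicity of $M^n$, Aldous's criterion yields tightness in $C([0,T],\bR)$ for each $T>0$, hence in $C(\bR_+,\bR)$.

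The main technical obstacle is justifying the shifted-time conditional estimates in the Aldous step: since $(X^{\e_n,\delta_n},Y^{\e_n,\delta_n})$ is not assumed strong Markov in the generality of Theorem \ref{t:A}, one cannot restart the process at $\tau$ via a Markovian argument. However, the proofs of Lemmas \ref{cor:estimate_hittings} and \ref{lem:accu_estimates} use only the uniform bounds on the coefficients and the $\rF_0$-measurability of the initial condition — both properties that are preserved under time-translation by a stopping time — so the estimates transfer to the shifted filtration $(\rF^{\e_n,\delta_n}_{\tau+t})_{t\ge 0}$ without invoking the strong Markov property.
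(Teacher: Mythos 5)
Your proposal is correct, but it is a genuinely different argument from the paper's. The paper proves the lemma in one step: it bounds the increment of the process in \eqref{e:fdL} over $[t_1,t_2]$ by a constant times
$\e \sum_k \int_{t_1}^{t_2} \frac{d(X^{\e,\delta}_s)}{\Sigma^{00}(s,X^{\e,\delta}_s, Y^{\e,\delta}_s)}\,\di L^{a_k^\e}_s$,
then invokes Lemma \ref{e:tot} (the ucp law-of-large-numbers for these weighted local-time sums) to say this quantity converges in probability, uniformly over $t_1,t_2\in[0,T]$, to $C\sup \frac{\Sigma^{00}}{d}\,(t_2-t_1)$ — monotonicity plus pointwise convergence of nondecreasing processes to a continuous limit upgrades this to uniform control, giving tightness. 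Your route instead applies Aldous's criterion directly, using only the first-moment and exit-time estimates of Lemma~\ref{cor:estimate_hittings}: the conditional lower bound $\E[\tau_{j+1}-\tau_j\mid\rF_{\tau_j}]\ge B\e_n^2$ bounds the crossing count $\nu^{\e_n,\delta_n}_T$ via a Wald/optional-stopping argument, and the conditional upper bound $\E[L^{a_{k_j}}_{\tau_{j+1}}-L^{a_{k_j}}_{\tau_j}\mid\rF_{\tau_j}]\le A_1\e_n$ controls each excursion's contribution; multiplying yields the marginal bound $\E M^n_T\le KT$ and the dynamic increment bound $\E[M^n_{\tau+\delta}-M^n_\tau\mid\rF_\tau]\le K(\delta+\e_n^2)$. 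Your closing observation about transferring the conditional estimates to a general stopping time $\tau$ without the strong Markov property — because the proof of Lemma \ref{cor:estimate_hittings} is itself conditional on $\rF_0$ and only uses uniform coefficient bounds — is the right justification and is exactly the point one would otherwise worry about. What your approach buys: it is more elementary (Aldous plus first moments, rather than the full ucp limit machinery of Lemmas \ref{l:loctime}–\ref{e:tot}), and it is more self-contained in a way that matters here: Lemma~\ref{e:tot}, which the paper's proof cites, itself assumes weak relative compactness of $\{X^{\e,\delta},Y^{\e,\delta}\}$ (used for \eqref{e:h0}), while Lemma~\ref{cor:weak_comp_localT} is then invoked in the proof of Theorem~\ref{t:A} precisely to establish that compactness — so the paper's arrangement has at least the appearance of a circular dependency, whereas your direct Aldous argument does not touch Lemma~\ref{e:tot} at all and so is free of this issue.

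One small point worth spelling out if you write this up: in the Wald step, the event $\{\nu^{\e_n,\delta_n}_T > j\}$ equals $\{\tau_j^{\e_n,\delta_n}\le T\}$ and is $\rF_{\tau_j^{\e_n,\delta_n}}$-measurable, so the conditional bounds can be applied term by term and summed; and the residual contributions of the partial excursion straddling time $T$ (resp.\ the initial stretch from $\tau$ before the first membrane hit, which produces no local time) each add only an extra $O(\e_n^2)$, which is what produces the harmless $\e_n^2$ in your increment bound.
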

\begin{proof}
For brevity, we omit the index $n$ and identify $\e_n=\e$, $\delta_n=\delta$.
Let $T\in[0,\infty)$.
To estimate the modulus of continuity of the process from \eqref{e:fdL} we fix two arbitrary time instants $t_1$, $t_2$ such that
$0\leq t_1\leq t_2\leq T$. Then,
\ba
\e \Big| &\sum_{k=-\infty}^\infty \int_{t_1}^{t_2} f_s^{\e,\delta} \di L^{a_k^{\e, \delta}}_s (X^{\e, \delta})\Big|
\leq
 C \sup_{s,x,y}\frac{\Sigma^{00}(s,x,y)}{d(x)}\cdot
\e
\sum_{k=-\infty}^\infty\int_{t_1}^{t_2} \frac{d(X^{\e,\delta}_s)}{\Sigma^{00}(s,X^{\e,\delta}_s, Y^{\e,\delta}_s)}\,
\di L^{a_k^\e}_s (X^{\e,\delta}).
\ea
By Lemma \ref{e:tot}, we see that the r.h.s.\ of the latter inequality
converges in probability
to
\ba
C \sup_{s,x,y}\frac{\Sigma^{00}(s,x,y)}{d(x)} (t_2-t_1)
\ea
uniformly over $t_1,t_2\in[0,T]$, so that the weak relative compactness is established.
\end{proof}

2. Identification of the limit.

It follows from Theorem \ref{thm:conv_CAF} that any limit point of the family $\{X^{\e,\delta},Y^{\e,\delta},W^{\e,\delta}\}$
satisfies the SDE \eqref{e:XYlim}. Since this SDE has a unique (strong) solution by Assumption $\mathbf{A}^\mathrm{Lip}_\mathrm{coeff}$ ,
the convergence \eqref{eq:conv_XY} follows.

Convergence \eqref{eq:conv_XYA} follows analogously with the help of Theorem \ref{thm:conv_CAF}.

The proof of \eqref{eq:limit_hom1} follows from the continuous mapping theorem and
Theorem 13.2.1 in \cite{whitt02}.

Equation \eqref{e:XYhatlim} follows from Section (b) \emph{Time change} on p.\ 225 in
\cite{IW89} because $A^{-1}$ is a random time change in an SDE.

Finally, if $\mathfrak{q}=+\infty$, the limit \eqref{eq:limit_hom2} follows because $(A^{\e,\delta,\lambda})^{-1}$ converges to $0$.

\section{Proof of Theorem \ref{thm:limitODE}}

Recall that
$(\widetilde X^{\e,\delta}_t,\widetilde Y^{\e,\delta}_t)_{t\in[0,\infty)}
:=(X^{\e,\delta}_{t \e/\delta},Y^{\e,\delta}_{t \e/\delta})_{t\in[0,\infty)}$.
It can be seen that
$L^{a_k^\e}_{t\e/\delta} (X^{\e,\delta})=L^{a_k^\e}_{t } (\widetilde X^{\e,\delta})$, see \eqref{e:localtime_tilde_X}.

Similarly to Lemma \ref{l:31}, the process $(\widetilde X^{\e,\delta}_t,\widetilde Y^{\e,\delta}_t)_{t\in[0,\infty)}$ satisfies the SDE
\ba
\label{e:hat_XY}
\widetilde X^{\e,\delta}_t
&=x+\sum_{l=1}^m
\sqrt{\frac{\e}{\delta}}\int_0^t \sigma_{l}^0\Big( \frac{\e}{\delta} s,   \widetilde X^{\e,\delta}_{  s},  \widetilde Y^{\e,\delta}_{s}\Big)\,\di \widetilde W^l_s
+ \frac{\e}{\delta} \int_0^t b^0\Big(\frac{\e}{\delta} s,  \widetilde X^{\e,\delta}_{  s},  \widetilde Y^{\e,\delta}_{s}\Big)\,\di s
+
{\delta }  \sum_k \int_0^{t } \beta\Big(s\frac{\e}{\delta},a_k^\e,\widetilde Y^{\e,\delta}_{s }\Big)\, \di L^{a_k^\e}_{s} (\widetilde X^{\e,\delta})
,\\
\widetilde Y^{i,{\e,\delta}}_t&= {y^i} +\sum_{l=1}^m \sqrt{\frac{\e}{\delta}}\int_0^t
\sigma_{l}^i\Big(\frac{\e}{\delta} s,  \widetilde X^{\e,\delta}_{  s},  \widetilde Y^{\e,\delta}_{s}\Big)\,\di
\widetilde W^l_s + \frac{\e}{\delta}\int_0^t b^i\Big(\frac{\e}{\delta} s,   \widetilde X^{\e,\delta}_{  s},
\widetilde Y^{\e,\delta}_{s}\Big)\,\di s +
{\delta }  \sum_k \int_0^{t } \theta^i\Big(s\frac{\e}{\delta}, a_k^\e,
\widetilde Y^{\e,\delta}_{s }\Big)\, \di L^{a_k^\e}_{s} (\widetilde X^{\e,\delta}),\\
&\quad i=1,\dots,d.
\ea
Note that as $\e/\delta\to 0$, and all the coefficients are bounded, the stochastic and Lebesgue integrals w.r.t.\
$\di \widetilde W$ and $\di s$ converge to zero in probability as
$\e,\delta\to 0$ uniformly over compact time intervals.

Let us investigate the limit behavior of local times.
The following statement can be proved similarly to Lemmas \ref{l:55} and
\ref{cor:conv_CAF_to_t}.
\begin{lem}
\label{lem:LLN_crossings_general}
Assume that conditions of Theorem \ref{thm:limitODE} hold. Then, we have the following ucp-convergence:
\ba\label{eq:11563}
\delta \sum_k \int_0^{t\e/\delta}   
\frac{d  (X^{\e,\delta}_{s})}{\Sigma^{00}(s, X^{\e,\delta}_{s }, Y^{\e,\delta}_{s } )}\, \di L^{a_k^\e}_{s} (X^{\e,\delta})
=
\delta \sum_k \int_0^{t}
\frac{d  (\widetilde X^{\e,\delta}_{s})}{\Sigma^{00}(s\frac{\e}{\delta},
\widetilde X^{\e,\delta}_{s},\widetilde  Y^{\e,\delta}_{s} )}
\, \di L^{a_k^\e}_{s} (\widetilde X^{\e,\delta})
                \to t,\quad  \e,\delta\to0.
\ea
\end{lem}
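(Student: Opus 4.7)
The plan is to mimic the argument of Lemmas \ref{l:55} and \ref{cor:conv_CAF_to_t}, but on the extended time window $[0, t\e/\delta]$ rather than on $[0,t]$. The equality between the two integral expressions is a direct change-of-variables consequence of the identity $L^a_s(\widetilde X^{\e,\delta}) = L^a_{s\e/\delta}(X^{\e,\delta})$, itself following from $\langle \widetilde X^{\e,\delta}\rangle_s = \langle X^{\e,\delta}\rangle_{s\e/\delta}$ and the occupation-time definition of the symmetric local time. Hence it suffices to establish the ucp convergence for the left-hand expression.

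First I would reduce to a sum over the membrane-crossing stopping times $\tau_j^{\e,\delta}$. Continuity of $d$ and $\Sigma^{00}$, combined with the oscillation bounds \eqref{e:X1}, \eqref{e:Y1} and the ucp smallness \eqref{e:tau} of $\max_j(\tau_{j+1}-\tau_j)$, show that the integrand is uniformly close on each interval $[\tau_j^{\e,\delta},\tau_{j+1}^{\e,\delta})$ to its frozen value at the left endpoint $d(a^\e_{k_j})/\Sigma^{00}(\tau_j, a^\e_{k_j}, Y^{\e,\delta}_{\tau_j})$. A martingale replacement argument as in Lemma \ref{l:loctime}, but with the summation index allowed to reach $n\lesssim 1/(\e\delta)$ instead of $1/\e^2$, then lets me substitute each local-time increment $L^{a^\e_{k_j}}_{\tau_{j+1}}(X^{\e,\delta}) - L^{a^\e_{k_j}}_{\tau_j}(X^{\e,\delta})$ by its conditional expectation; the resulting quadratic-variation bound is $\delta^2 \cdot O(\e^2) \cdot O(1/(\e\delta)) = O(\e\delta) \to 0$, so the replacement is legitimate.

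After both reductions, the task becomes to show $\delta \sum_{\tau_j\le t\e/\delta} \tfrac{d(a^\e_{k_j})}{\Sigma^{00}(\tau_j, a^\e_{k_j}, Y^{\e,\delta}_{\tau_j})} \E[L^{a^\e_{k_j}}_{\tau_{j+1}} - L^{a^\e_{k_j}}_{\tau_j}\mid\rF_{\tau_j}^{\e,\delta}]\to t$. Applying the sharp estimate \eqref{e:accL} together with assumption $\mathbf{A}_d^{\mathrm{Lip}_b}$, the inner conditional expectation rewrites as $\e\, d(a^\e_{k_j})+O(\e^2)+O(\e\delta)$, so the left-hand side becomes $\e\delta\sum d^2(a^\e_{k_j})/\Sigma^{00}$ plus an $O_P(\e)+O_P(\delta)$ error (the $O(\e^2)$ and $O(\e\delta)$ remainders are summed at most $O_P(1/(\e\delta))$ times and multiplied by $\delta$). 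I would then invoke \eqref{e:acctau} to equate $\e^2 d^2(a^\e_{k_j})/\Sigma^{00}(\tau_j, a^\e_{k_j}, Y^{\e,\delta}_{\tau_j})$ with $\E[\tau_{j+1}-\tau_j\mid\rF_{\tau_j}^{\e,\delta}]$ up to a similarly negligible error, reducing the claim to $(\delta/\e)\sum\E[\tau_{j+1}-\tau_j\mid\rF_{\tau_j}^{\e,\delta}]\to t$. Since $\sum_{\tau_j\le t\e/\delta}(\tau_{j+1}-\tau_j) = \tau_{\nu+1}^{\e,\delta} = t\e/\delta + O_P(\e^2)$, and the variance of the martingale $\sum\bigl((\tau_{j+1}-\tau_j)-\E[\tau_{j+1}-\tau_j\mid\rF_{\tau_j}^{\e,\delta}]\bigr)$ is bounded by $O(\e^4)\cdot O(1/(\e\delta)) = O(\e^3/\delta)$, multiplying by $\delta/\e$ yields $t$ up to an error of order $\sqrt{\e\delta}\to 0$. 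The uniform upgrade on compact intervals in $t$ follows from the monotonicity of the left-hand side and a Dini-type argument, exactly as in the proofs of the cited lemmas.

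The main obstacle is the bookkeeping: the number of excursions over the extended time window, $\nu^{\e,\delta}_{t\e/\delta}\sim 1/(\e\delta)$, is much larger than the $1/\e^2$ appearing in Lemmas \ref{l:55}--\ref{cor:conv_CAF_to_t}, so every residual term from Lemma \ref{l:estim_X} must be re-checked after being summed $O(1/(\e\delta))$ times and scaled by $\delta$ rather than $\e$. The most delicate point is the \emph{relative} smallness of the martingale fluctuation in the $\tau_{j+1}-\tau_j$ sum: it must be $o_P(\e/\delta)$ rather than merely $o_P(1)$, and this is precisely what forces one to use the quantitative bound $O(\e^{3/2}/\sqrt\delta)$, which vanishes only thanks to $\e\delta\to 0$ throughout the regime $\mathfrak p=\infty$ of Theorem \ref{thm:limitODE}.
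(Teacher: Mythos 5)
Your proposal is correct and follows essentially the same route as the paper: the paper's proof is precisely the chain of reductions you describe (telescoping $\sum\frac{\delta}{\e}(\tau_{j+1}-\tau_j)\to t$, passage to conditional expectations, substitution of $\E[\Delta\tau_j\mid\rF]$ and $\E[\Delta L_j\mid\rF]$ via \eqref{e:acctau} and \eqref{e:accL}, removal of the conditioning, and finally unfreezing the integrand as in Lemma \ref{e:tot}), only written in the opposite order. Your quantitative bookkeeping — roughly $1/(\e\delta)$ excursions on the window $[0,t\e/\delta]$, martingale variance $O(\e^3/\delta)$ giving a normalized fluctuation $O_P(\sqrt{\e\delta})$, and the requirement that all residuals be $o_P(\e/\delta)$ — is exactly the content the paper leaves implicit in its displayed chain of ucp limits.
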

\begin{proof}
The proof of \eqref{eq:11563} follows the lines of Section
\ref{s:many} and, in particular, of Lemmas \ref{l:55},
\ref{cor:conv_CAF_to_t}.
We have the following ucp-convergence:
\ba
&\sum_{ \frac{\delta}{\e} \tau^{\e,\delta}_{j }  \leq t}
\frac{\delta}{\e} (\tau^{\e,\delta}_{j+1 }-\tau^{\e,\delta}_{j })
\to t,\quad  \e,\delta\to 0,\\
&\sum_{ \frac{\delta}{\e} \tau^{\e,\delta}_{j }  \leq t}
\frac{\delta}{\e} \E\Big[\tau^{\e,\delta}_{j+1 }-\tau^{\e,\delta}_{j } | \rF_{\tau^{\e,\delta}_{j }}\Big]
\to t,\quad \e,\delta\to 0,\\
& \frac{\delta}{\e} \sum_{ \frac{\delta}{\e}\tau^{\e,\delta}_{j }  \leq t}
\frac{\e^2 d^2 (  X^{\e,\delta}_{ \tau^{\e,\delta}_{j }})}{\Sigma^{00}(\tau^{\e,\delta}_j,
X^{\e,\delta}_{ \tau^{\e,\delta}_j},   Y^{\e,\delta}_{  \tau^{\e,\delta}_j} )}
\to t,\quad  \e,\delta\to 0,\\
&\frac{\delta}{\e} \sum_{\frac{\delta}{\e} \tau^{\e,\delta}_{j}\leq t}
\frac{\e d  (X^{\e,\delta}_{\tau^{\e,\delta}_{j }})}{\Sigma^{00}(\tau^{\e,\delta}_j,
X^{\e,\delta}_{\tau^{\e,\delta}_j}, Y^{\e,\delta}_{\tau^{\e,\delta}_j } )}
\E \Big[  L^{{X^{\e,\delta}_{\tau^{\e,\delta}_j}} }_{\tau^{\e,\delta}_{j+1}}  (X^{\e,\delta})
-L^{X^{\e,\delta}_{\tau^{\e,\delta}_j}}_{\tau^{\e,\delta}_j} (X^{\e,\delta})\Big|  \rF_{\tau^{\e,\delta}_j} \Big]
                \to t,\quad  \e,\delta\to 0,
\ea
and, finally,
\ba
\label{eq:conv_cond_to_t2}
\frac{\delta}{\e} \sum_{\frac{\delta}{\e}\tau^{\e,\delta}_{j}\leq t}
\frac{\e d  (X^{\e,\delta}_{\tau^{\e,\delta}_{j }})}
{\Sigma^{00}(\tau^{\e,\delta}_{j } , X^{\e,\delta}_{\tau^{\e,\delta}_{j } }, Y^{\e,\delta}_{\tau^{\e,\delta}_{j } } )}
\Big(L^{{X^{\e,\delta}_{\tau^{\e,\delta}_j}} }_{\tau^{\e,\delta}_{j+1}}  (X^{\e,\delta})
-L^{X^{\e,\delta}_{\tau^{\e,\delta}_j}}_{\tau^{\e,\delta}_j} (X^{\e,\delta})\Big)
                \to t,\quad  \e,\delta\to 0.
\ea
Therefore, similarly to Lemma \ref{e:tot} we get
\eqref{eq:11563}.
\end{proof}

As in Lemma \ref{cor:weak_comp_localT}, convergence \eqref{eq:11563} implies weak relative compactness of the family
\ba
\Big\{{\delta }
\sum_k \int_0^\cdot \beta\Big(\frac{\e}{\delta}s,a_k^\e,\widetilde Y^{\e,\delta}_{s }\Big)\, \di L^{a_k^\e}_{s} (\widetilde X^{\e,\delta}),
\delta  \sum_k \int_0^\cdot \theta^i\Big(\frac{\e}{\delta}s, a_k^\e,
\widetilde Y^{\e,\delta}_{s }\Big)\, \di L^{a_k^\e}_{s} (\widetilde X^{\e,\delta}),\ i=1,\dots,d\Big\}_{\e,\delta\in(0,1]}
\ea
and, consequently, of the family $\{ \widetilde X^{\e,\delta}, \widetilde Y^{\e,\delta}\}_{\e,\delta\in(0,1]}$.

Similarly to the proof of Theorem \ref{thm:conv_CAF}, convergence \eqref{eq:11563} and Lemma \ref{lem:conv_integrals_ordinary}
imply that any limit point of $( \widetilde X^{\e,\delta}, \widetilde Y^{\e,\delta})$ satisfies the ODE
\eqref{e:XY_1membrane-degenODE}.
Since this ODE has a unique solution, we obtain convergence \eqref{eq:limit_isODE}.
If, moreover, $\mathfrak{r} =[0,\infty)$, then we have \eqref{eq:conv_XYA_deg} and, therefore,
\eqref{eq:conv_XYA_comp_d}. The case $\mathfrak{p}=\mathfrak{r} =\infty$ is trivial.

\section*{Acknowledgments}
O.A.\ acknowledges funding from the DFG project AR 1717/2-1 (548113512).
A.P.\  thanks the Swiss National Science
Foundation for its support (grants IZRIZ0\_226875, 200020\_214819, 200020\_200400, and 200020\_192129).
The text of this paper was partially reviewed for spelling and grammar using
ChatGPT.

%
%
%
%
%
%
%
%
%

%

\begin{thebibliography}{10}

\bibitem{Aryasova+24}
O.~Aryasova, I.~Pavlyukevich, and A.~Pilipenko.
\newblock Homogenization of a multivariate diffusion with semipermeable
  interfaces.
\newblock {\em Journal of Theoretical Probability}, 37:1787--1823, 2024.

\bibitem{bensoussan1978asymptotic}
A.~Bensoussan, J.-L. Lions, and G.~Papanicolaou.
\newblock {\em {Asymptotic Analysis for Periodic Structures}}, volume~5 of {\em
  {Studies in Mathematics and Its Applications}}.
\newblock North-Holland, Amsterdam, 1978.

\bibitem{BerJikPap99}
V.~Berdichevsky, V.~Jikov, and G.~Papanicolaou, editors.
\newblock {\em {Homogenization: Serguei Kozlov Memorial Volume}}, volume~50 of
  {\em {Series on Advances in Mathematics for Applied Sciences}}.
\newblock World Scientific, Singapore, 1999.

\bibitem{billingsley2013convergence}
P.~Billingsley.
\newblock {\em {Convergence of Probability Measures}}.
\newblock John Wiley \& Sons, New York, second edition, 1999.

\bibitem{chechkin2007homogenization}
G.~A. Chechkin, A.~L. Piatnitski, and A.~S. Shamaev.
\newblock {\em Homogenization: {M}ethods and {A}pplications}, volume 234 of
  {\em Translations of Mathematical Monographs}.
\newblock American Mathematical Society, Providence, RI, 2007.

\bibitem{dudko2004diffusion}
O.~K. Dudko, A.~M. Berezhkovskii, and G.~H. Weiss.
\newblock Diffusion in the presence of periodically spaced permeable membranes.
\newblock {\em The Journal of Chemical Physics}, 121(22):11283--11288, 2004.

\bibitem{engelbert1991strong}
H.-J. Engelbert and W.~Schmidt.
\newblock Strong {M}arkov continuous local martingales and solutions of
  one-dimensional stochastic differential equations ({Part III}).
\newblock {\em Mathematische Nachrichten}, 151(1):149--197, 1991.

\bibitem{Freidlin_et1994}
M.~I. Freidlin and A.~D. Wentzell.
\newblock Necessary and sufficient conditions for weak convergence of
  one-dimensional {M}arkov processes.
\newblock In M.~I. Freidlin, editor, {\em The Dynkin Festschrift: Markov
  Processes and their Applications}, volume~34 of {\em Progress in
  Probability}, pages 95--109. Birkh{\"a}user, Boston, MA, 1994.

\bibitem{gao1993martingale}
P.~Gao.
\newblock The martingale problem for a differential operator with piecewise
  continuous coefficients.
\newblock In {\em Seminar on Stochastic Processes, 1992}, volume~33 of {\em
  Progress in Probability}, pages 135--141, Boston, 1993. BirkhÃ¤user.

\bibitem{grebenkov2010pulsed}
D.~S. Grebenkov.
\newblock Pulsed-gradient spin-echo monitoring of restricted diffusion in
  multilayered structures.
\newblock {\em Journal of Magnetic Resonance}, 205(2):181--195, 2010.

\bibitem{hairer2010periodic}
M.~Hairer and C.~Manson.
\newblock Periodic homogenization with an interface.
\newblock In {\em Progress in Analysis and Its Applications, Proceedings of the
  7th International ISAAC Congress}, pages 410--416, Singapore, 2010. World
  Scientific.

\bibitem{hairer2010one}
M.~Hairer and C.~Manson.
\newblock Periodic homogenization with an interface: The one-dimensional case.
\newblock {\em Stochastic Processes and Their Applications}, 120(8):1589--1605,
  2010.

\bibitem{hairer2011multi}
M.~Hairer and C.~Manson.
\newblock Periodic homogenization with an interface: the multi-dimensional
  case.
\newblock {\em The Annals of Probability}, 39(2):648--682, 2011.

\bibitem{hairer2008homogenization}
M.~Hairer and E.~Pardoux.
\newblock Homogenization of periodic linear degenerate {PDE}s.
\newblock {\em Journal of Functional Analysis}, 255(9):2462--2487, 2008.

\bibitem{HShepp-81}
J.~M. Harrison and L.~A. Shepp.
\newblock {On skew Brownian motion}.
\newblock {\em The Annals of Probability}, 9(2):309--313.

\bibitem{IW89}
N.~Ikeda and S.~Watanabe.
\newblock {\em {Stochastic Differential Equations and Diffusion Processes}},
  volume~24 of {\em {North--Holland Mathematical Library}}.
\newblock North-Holland, second edition, 1989.

\bibitem{JacodS-03}
J.~Jacod and A.~N. Shiryaev.
\newblock {\em {Limit Theorems for Stochastic Processes}}, volume 288 of {\em
  {Grundlehren der Mathematischen Wissenschaften}}.
\newblock Springer, Berlin, second edition, 2003.

\bibitem{JikKozOle94}
V.~V. Jikov, S.~M. Kozlov, and O.~A. Oleinik.
\newblock {\em {Homogenization of Differential Operators and Integral
  Functionals}}.
\newblock Springer, Berlin, 1994.

\bibitem{krykun2017convergence}
I.~H. Krykun.
\newblock Convergence of skew {B}rownian motions with local times at several
  points that are contracted into a single one.
\newblock {\em Journal of Mathematical Sciences}, 221(5):671--678, 2017.

\bibitem{KurtzP-91}
T.~G. Kurtz and P.~Protter.
\newblock Weak limit theorems for stochastic integrals and stochastic
  differential equations.
\newblock {\em The Annals of Probability}, 19(3):1035--1070, 1991.

\bibitem{LeGall83}
J.~F. Le~Gall.
\newblock Applications du temps local aux {\'e}quations diff{\'e}rentielles
  stochastiques unidimensionnelles.
\newblock In {\em S{\'e}minaire de probabilit{\'e}s XVII}, volume 986 of {\em
  Lecture Notes in Mathematics}, pages 15--31. Springer, Berlin, 1983.

\bibitem{legall1984one}
J.-F. Le~Gall.
\newblock One-dimensional stochastic differential equations involving the local
  times of the unknown process.
\newblock In {\em Stochastic Analysis and Applications}, volume 1095 of {\em
  Lecture Notes in Mathematics}, pages 51--82. Springer, Berlin, 1984.

\bibitem{Lejay-06}
A.~Lejay.
\newblock {On the constructions of the skew Brownian motion}.
\newblock {\em Probability Surveys}, 3:413--466, 2006.

\bibitem{Makhno-12}
S.~Y. Makhno.
\newblock {\em {Stochastic Equations. Limit Theorems (in Russian)}}.
\newblock Naukova Dumka, Kiev, 2012.

\bibitem{makhno2016one}
S.~Y. Makhno.
\newblock One-dimensional stochastic equations in layered media with
  semi-permeable barriers.
\newblock {\em Random Operators and Stochastic Equations}, 24(3):165--171,
  2016.

\bibitem{makhno2017diffusion}
S.~Y. Makhno.
\newblock Diffusion processes in a composite environment.
\newblock {\em Theory of Probability and Mathematical Statistics}, 94:137--149,
  2017.

\bibitem{moutal2019diffusion}
N.~Moutal and D.~Grebenkov.
\newblock Diffusion across semi-permeable barriers: spectral properties,
  efficient computation, and applications.
\newblock {\em Journal of Scientific Computing}, 81(3):1630--1654, 2019.

\bibitem{trutnau2015countably}
Y.~Ouknine, F.~Russo, and G.~Trutnau.
\newblock On countably skewed {B}rownian motion with accumulation point.
\newblock {\em Electronic Journal of Probability}, 20(82):1--27, 2015.

\bibitem{pardoux1999homogenization}
E.~Pardoux.
\newblock {Homogenization of linear and semilinear second order parabolic PDEs
  with periodic coefficients: a probabilistic approach}.
\newblock {\em Journal of Functional Analysis}, 167(2):498--520, 1999.

\bibitem{pavliotis2008multiscale}
G.~A. Pavliotis and A.~M. Stuart.
\newblock {\em {Multiscale Methods. Averaging and Homogenization}}, volume~53
  of {\em {Texts in Applied Mathematics}}.
\newblock Springer, New York, 2008.

\bibitem{peskir2007change}
G.~Peskir.
\newblock A change-of-variable formula with local time on surfaces.
\newblock In {\em S{\'e}minaire de probabilit{\'e}s XL}, volume 1899 of {\em
  Lecture Notes in Mathematics}, pages 70--96. Springer, 2007.

\bibitem{ramirez2011multi}
J.~M. Ramirez.
\newblock Multi-skewed {B}rownian motion and diffusion in layered media.
\newblock {\em Proceedings of the American Mathematical Society},
  139(10):3739--3752, 2011.

\bibitem{RevuzYor05}
D.~Revuz and M.~Yor.
\newblock {\em {Continuous Martingales and Brownian Motion}}, volume 293 of
  {\em {Grundlehren der Mathematischen Wissenschaften}}.
\newblock Springer, Berlin, corrected third printing of the third edition,
  2005.

\bibitem{slkezak2021diffusion}
J.~Slezak and S.~Burov.
\newblock From diffusion in compartmentalized media to non-{G}aussian random
  walks.
\newblock {\em Scientific Reports}, 11(1):5101, 2021.

\bibitem{StrVar}
D.~Stroock and S.~R.~S. Varadhan.
\newblock {\em Multidimensional Diffusion Processes}, volume 233 of {\em
  Grundlehren der Mathematischen Wissenschaften}.
\newblock Springer, Berlin, 1979.

\bibitem{Tanner78}
J.~E. Tanner.
\newblock Transient diffusion in a system partitioned by permeable barriers.
  application to {NMR} measurements with a pulsed field gradient.
\newblock {\em The Journal of Chemical Physics}, 69(4):1748â€“1754, 1978.

\bibitem{weinryb1984homogeneisation}
S.~Weinryb.
\newblock Homog{\'e}n{\'e}isation pour des processus associ{\'e}s {\`a} des
  fronti{\`e}res perm{\'e}ables.
\newblock {\em Annales de l'Institut Henri Poincar{\'e}. Probabilit{\'e}s et
  statistiques}, 20(4):373--407, 1984.

\bibitem{whitt02}
W.~Whitt.
\newblock {\em {Stochastic-Process Limits: An Introduction to
  Stochastic-Process Limits and Their Application to Queues}}.
\newblock Springer, 2002.

\bibitem{yuksel2012effective}
N.~Y{\"u}ksel, A.~Avc{i}, and M.~K{i}l{i}{\c{c}}.
\newblock The effective thermal conductivity of insulation materials reinforced
  with aluminium foil at low temperatures.
\newblock {\em Heat Mass Transfer}, 48:1569--1574, 2012.

\end{thebibliography}

\end{document}